\newtheorem{defn}{Definition}[section]
\newtheorem{theorem}[defn]{Theorem}
\newtheorem{lemma}[defn]{Lemma}
\newtheorem{proposition}[defn]{Proposition}
\newtheorem{cor}[defn]{Corollary}
\newtheorem{remark}[defn]{Remark}
\newtheorem{example}[defn]{Example}
\newenvironment{proof}{{\bf Proof }}{{\vskip 0.1cm \hfill$\Box$}}
\def\N {{\mathbb N}}
\def\Q {{\mathbb Q}}
\def\R {{\mathbb R}}
\def\E{{\mathbb E}}
\def\P{{\mathbb P}}
\def\M{{\mathbb M}}
\newcommand{\F}{\mathcal{F}}
\begin{document}


\noindent
{\Large \bf {Pointwise well-posedness results for degenerate It\^{o}-SDEs with locally bounded drifts}}
\\ \\
\bigskip
\noindent
{\bf Haesung Lee, Gerald Trutnau{\footnote{The research of Gerald Trutnau was supported by the National Research Foundation of Korea (NRF) grant funded by the Korea government (MSIP) : NRF-2016K2A9A2A13003815.}}}  \\
\noindent
{\bf Abstract.} Building on results developed in \cite{LT24a}, where It\^{o}-SDEs with possibly degenerate and discontinuous dispersion coefficient and measurable drift were analyzed with respect to a given (sub-)invariant measure, we develop here additional elliptic regularity results for PDEs and consider the same equations with some further regularity assumptions on the coefficients to provide a pointwise analysis for every starting point in Euclidean space, $d\ge 2$. Our main result is (weak) well-posedness, i.e. weak existence and uniqueness in  law, which we obtain under our main  assumption for any locally bounded drift and arbitrary starting point among all solutions that spend zero time at the points of degeneracy of the dispersion coefficient. The points of degeneracy form a $d$-dimensional Lebesgue measure zero set, but may be hit by the weak solutions. Weak existence for arbitrary starting point is obtained under broader assumptions. In particular, in that case the drift does not need to be locally bounded.\\ \\
\noindent
{Mathematics Subject Classification (2020): primary; 60H20, 47D07, 35K10; secondary: 
 60J60, 60J35, 31C25, 35B65.}\\

\noindent 
{Keywords: degenerate stochastic differential  equation, uniqueness in law, martingale problem, weak existence, strong Feller semigroup, elliptic and parabolic regularity.} 

\section{Introduction} \label{intro}
This paper is devoted to the study of existence and uniqueness in law of weak solutions, namely well-posedness, for possibly degenerate time-homogeneous It\^{o}-SDEs with measurable coefficients in dimension $d\ge 2$. First note, that in contrast to the case of continuous coefficients, there are already elementary examples of SDEs with discontinuous coefficients where well-posedness fails to hold. Therefore, obtaining well-posedness results for general classes of SDEs with both degenerate and discontinuous coefficients appears to be impossible without further restriction. Indeed, even in the globally uniformly strictly elliptic case with zero drift there is an example of an SDE with discontinuous diffusion coefficient where uniqueness in law does not hold (see \cite{Nadi}, \cite{S99}, or also \cite[Example 1.24]{CE05}). In order to obtain well-posedness, certain types of discontinuities were studied in the literature. For instance, the case where the diffusion matrix is not far from being continuous, i.e. continuous up to a small set (e.g. a countable set or a set of $\alpha$-Hausdorff measure zero with sufficiently small $\alpha$, or else, see for instance \cite{CeEsFa91}, or introductions of \cite{Kry04} and \cite{Nadi}  for references). In \cite{BaPa} the authors assumed the diffusion matrix to be piecewise constant on a decomposition of the Euclidean space into a finite union of polyhedrons and the drift was assumed to consist of locally bounded  functions with at most linear growth at infinity. Furthermore, in \cite{Gao} the author considered discontinuities of the diffusion matrix along the common boundary of the upper and lower half spaces. In the case where the diffusion matrix is degenerate  well-posedness for a special class of coefficients was obtained in  \cite{Kry04}. Though the discontinuity is only along a hyperplane of codimension one and the coefficients are quite regular outside the hyperplane, it seems to be one of the first examples of a discontinuous degenerate diffusion coefficient where well-posedness still holds (\cite[Example 1.1 and Theorem 2.18]{Kry04}).  \\
In this work, we follow a different approach. Instead of investigating coefficients which are not far from being continuous, we try to keep the conditions on the coefficients as weak as possible, but as in  \cite{LT24a}, we put constraints on the solutions. In \cite{LT24a}, we considered merely $L^2_{loc}$-assumptions on the drift, but admissible solutions were only searched for in a certain class of strong Markov processes with respect to a given sub-invariant measure. Here, generally speaking, any weak solution is allowed as long as it does not spend a positive amount of time at the zeros of the dispersion coefficient, we hence derive conditional uniqueness. Such kind of uniqueness appears often naturally, or as technical condition,  and has been investigated in many different cases, with different constraints in the literature (see for instance \cite{ES91,PavShe20,HTV03,RoeZh23} and a more detailed discussion of these papers below). \\
Now in order to describe our main results, which are summarized in concise, quickly readable form in Section \ref{themainresult} below (we suggest to read these first), let us first describe the nature of the dispersion coefficient $\widehat{\sigma}=\sqrt{\frac{1}{\psi}}\cdot \sigma$ that appears in our main Theorem \ref{maintheorem} about well-posedness.  Theorem \ref{maintheorem} is derived under any non-explosion condition and the condition {\bf (C)} of Section \ref{themainresult}.
As $A=\sigma\sigma^T$ appearing there, is supposed to be (pointwisely) locally uniformly strictly elliptic, the degeneracy of $\widehat{\sigma}$ can only stem from the factor $\sqrt{\frac{1}{\psi}}$,  while the discontinuity of $\widehat{\sigma}$ may stem from both the discontinuity of $\sqrt{\frac{1}{\psi}}$ and $A=(a_{ij})_{1\le i,j\le d}$ (see Remark \ref{maindifferences}(a)). Typical examples for $\sqrt{\frac{1}{\psi}}$ are given as for instance in condition (d) of Example \ref{scopepsi} or even (in the spirit of \cite{BaPa}) by $\sqrt{\frac{1}{\psi}}=\sum_{i=1}^\infty \alpha_i 1_{A_i}$, with $\R^d=\dot{\cup}_{i=1}^{\infty}A_i$, $(\alpha_i)_{i\in \N}\subset (0,\infty)$, for measurable $A_i$, such that $\psi$ satisfies the conditions of  {\bf (C)}. Moreover, condition \eqref{zerospen}, i.e. that an admissible solution must spend zero time at the zeros of $\sqrt{\frac{1}{\psi}}$ is necessary as can be seen from Remark \ref{remex}. Therefore our uniqueness in  law result is conditional among all solutions that spend zero time at the zeros of $\sqrt{\frac{1}{\psi}}$, but of course the condition is automatically fulfilled if $\sqrt{\frac{1}{\psi}}$ has no zeros, i.e. if it only attains strictly positive values.\\
Our method to obtain weak existence strongly differs from the techniques used in  \cite{BaPa} and \cite{Kry04} and in previous literature. Our techniques involve semigroup theory, elliptic and parabolic regularity theory, and the theory of generalized Dirichlet forms (i.e. the construction of a Hunt process from a sub-Markovian $C_0$-semigroup of contractions on some $L^1$-space with a weight). In the frame of generalized Dirichlet forms this program has been first carried out in \cite{LT18,LT19} and then among others refined and summarized in \cite{LST22}. Here, the existence of an $L^1(\R^d,\widehat{\mu})$-semigroup and resolvent, and generalized Dirichlet form follows directly from previous work \cite{LT24a}, since {\bf (C1)} $\Rightarrow$\cite[{\bf (A)}]{LT24a} for a certain density $\rho$ (see Remark \ref{aboutrho}). In order to show that the $L^1(\R^d,\widehat{\mu})$-semigroup induces the transition function of a Hunt process that weakly solves the underlying SDE, we were able to improve the elliptic and parabolic regularity results from \cite{LT18,LT19,LST22} by using recent regularity results of \cite{KK19}, which together with results of \cite{DK11} can be used to derive our main elliptic regularity result (see Theorem \ref{mainregulthm}  below). The weak solution that we construct satisfies condition \eqref{spendingzero} (cf. Remark \ref{casemdandindepbor}(i)). Moreover, our main elliptic regularity result is used to relax significantly the conditions on the coefficients imposed in \cite{LT19de} (cf. Remark \ref{maindifferences}).\\
In order to derive uniqueness in law, we adapt an idea of Stroock and Varadhan (\cite{StrVar}) to show uniqueness for the martingale problem using a Krylov type estimate. 
Krylov type estimates have been widely used to obtain the weak solution and its uniqueness, in particular pathwise uniqueness, simultaneously. In this work, weak existence of a solution and uniqueness in law are shown separately of each other using different techniques. We use a local Krylov type estimate (Theorem \ref{krylovtype}) to show uniqueness in law and once uniqueness in law holds we can complement the local Krylov type estimate at least for the time-homogeneous case (cf. last part of Theorem \ref{maintheorem}). In particular our method typically implies weak existence results that are more general than the uniqueness results (compare for instance the weak existence result stated in Theorem \ref{maintheorem} and the one of Corollary \ref{weakexcoro}). As further byproduct of the improved regularity results obtained in this paper, Theorem \ref{weakexistence4}  generalizes substantially the weak existence result \cite[Theorem 3.22]{LST22}.\\
Let us now mention further related work. Engelbert and Schmidt studied in \cite{ES91} general one-dimensional It\^{o}-SDEs including the case where the dispersion coefficient is degenerate and a similar condition to \eqref{zerospen}, called fundamental solution (see \cite[Definition 4.16]{ES91})), is considered to obtain well-posedness.
\cite{PavShe20} investigates the well-posedness of the Stratonovich counterpart of the Girsanov SDE and the condition of spending zero time at $0$ is also imposed on the admissible solutions. In  \cite{HTV03} equations without drift and with bounded dispersion coefficient that is locally of bounded variation are studied, and the Brownian motion is replaced by a regular H\"{o}lder path. In this framework an analogue of the one-dimensional Girsanov SDE (i.e. \eqref{generalizedGirsanov} with $\widehat{\mathbf{H}}\equiv0$ and $d=1$) is described in \cite[Example 3.3]{HTV03}. We further mention the results \cite{Fi08, Tre} which show  the well-posedness of the martingale problem by developing the well-posedness of Fokker-Planck equations and the superposition principle.
In particular, \cite[Theorems 3.1, 3.3]{Tre} covers a large class of diffusion and drift coefficients including time-dependent cases, but the twice weak spatial differentiability of the diffusion coefficient is imposed.
Therefore, \cite[Theorems 3.1, 3.3]{Tre}  can not yield the well-posedness of \eqref{underlyingsde} if the entries of $\widehat{\sigma}$ are not twice weakly differentiable. Furthermore, in \cite{Tre} global integrability conditions are imposed. Another result related to ours, at least in what concerns the $VMO$-assumption on the diffusion coefficient, can be found in \cite[Theorem 7.2]{Kr21} where a non-degenerate $VMO$ diffusion coefficient and an $L^d$-drift coefficient is considered under global assumptions. 
Concerning a recent result on singular SDEs with throughout global assumptions around the Ladyzhenkaya-Prodi-Perrin condition, we mention  \cite[Theorem 1.1]{RoeZh23}, where a conditional well-posedness result is obtained, i.e. weak existence and uniqueness in law are shown among all the solutions that satisfy a Krylov type estimate. \\
This paper is organized as follows: in Section \ref{framework} we introduce notations and conventions which are in particular necessary to formulate and understand the main results that are summarized in short form in Section \ref{themainresult}. Section \ref{fewfore} is devoted to the semigroup and resolvent regularity. Section \ref{effwegewe} is the core of our paper. It contains the results about weak existence and uniqueness in law.  In Section \ref{someanlyticresults} we develop the new regularity results that are in particular based on \cite{DK11,KK19}. Throughout the paper, we systematically develop results that are based on the conditions {\bf (C1)-(C3)}, and {\bf (C)} from the weakest to the strongest assumption. 
Thus the following implications related to the conditions that are occurring in this paper (and \cite{LT24a}) hold:
\[
 {\bf (C)} \Rightarrow {\bf (C3)} \Rightarrow {\bf (C2)} \Rightarrow {\bf (C1)} (\Rightarrow \cite[{\bf (A)}]{LT24a} \text{ for a certain } \rho\ \text{(see Remark \ref{aboutrho}})).
 \]
\section{Notations and conventions}\label{framework}
Notations which are not mentioned in this text, will be those as defined in \cite[Notations and Conventions]{LST22}.
The notations in the following definition will be often used in the sequel.
\begin{defn} \label{basidefn}  
\begin{itemize}
\item[(i)]For $\varphi \in L^1_{loc}(\mathbb{R}^d)$,with $\varphi>0$ a.e., $\frac{1}{\varphi}$ denotes any Borel measurable function satisfying $\varphi \cdot \frac{1}{\varphi}=1$ a.e.
\item[(ii)] For a (possibly non-symmetric) matrix of locally integrable functions $B = (b_{ij})_{1 \leq i,j \leq d}$ on $\mathbb{R}^d$ and a vector field $\bold{E}=(e_1,\ldots, e_d) \in L^1_{loc}(\mathbb{R}^d, \mathbb{R}^d)$, we write ${\rm{div}}B = \bold{E}$ if
\begin{equation} \label{defndivma}
\int_{\mathbb{R}^d} \sum_{i,j=1}^d b_{ij} \partial_i \phi_j \, dx = - \int_{\mathbb{R}^d} \sum_{j=1}^d e_j \phi_j \, dx \quad \text{ for all $\phi_j \in C_0^{\infty}(\mathbb{R}^d)$, $j=1, \ldots, d$}.
\end{equation}
In other words, if we consider the family of column vectors
\[
\bold{b}_j=\begin{pmatrix} 
b_{1j} \\ \vdots \\  b_{dj}
\end{pmatrix},\quad 1\le j\le d,\  \text{ i.e.}\  B=(\bold{b}_1 | \ldots |\bold{b}_d),
\]
then \eqref{defndivma} is equivalent to 
\begin{equation*}
\int_{\mathbb{R}^d} \sum_{j=1}^d \langle \bold{b}_j, \nabla \phi_j \rangle dx =-\int_{\mathbb{R}^d} \langle \bold{E}, \Phi \rangle dx, \text{ \;\;\; for all $\Phi=(\phi_1, \ldots, \phi_d) \in C_0^{\infty}(\mathbb{R}^d)^d$.}
\end{equation*}
Let $B^T=(b^T_{ij})_{1 \leq i,j \leq d}=(b_{ji})_{1 \leq i,j \leq d}$ denote the transpose of $B$. For $\rho \in H^{1,1}_{loc}(\mathbb{R}^d)$ with $\rho>0$ a.e., we write
\begin{equation*}\label{logder 2}
\beta^{\rho, B^T} := \frac12{\rm{div}} B + \frac{1}{2\rho} B^T \nabla \rho.
\end{equation*}
For $\psi \in L^1_{loc}(\mathbb{R}^d)$, $\psi>0$ a.e. with $\frac{1}{\psi} \in L^{\infty}_{loc}(\mathbb{R}^d)$, we write
\begin{equation*}\label{logder 1}
\beta^{\rho, B^T, \psi}:= \frac{1}{\psi} \beta^{\rho, B^T} = \frac{1}{2\psi} {\rm{div}} B + \frac{1}{2\psi\rho }  B^T \nabla \rho
\end{equation*}
\item[(ii)]
For $B=(b_{ij})_{1 \leq i,j \leq d}$, with $b_{ij} \in H^{1,1}_{loc}(\mathbb{R}^d)$, $1\le i,j\le d$, we define on $\mathbb{R}^d$ a vector field $\nabla B =\big(  (\nabla B)_1,  \ldots, (\nabla B)_d \big) $ by
$$
(\nabla B)_i = \sum_{j=1}^d \partial_j b_{ij}, \quad \; i=1, \ldots, d.
$$
In particular, by \cite[Remark 2.2(i)]{LT24a}, ${\rm div} B=\nabla B^T$ if $b_{ij} \in H^{1,1}_{loc}(\mathbb{R}^d)$. $1\le i,j\le d$.
\end{itemize}
\end{defn}

\begin{defn}
$VMO$ denotes the set of all functions of vanishing mean oscillation, i.e. the set of all $g \in L^1_{loc}(\mathbb{R}^d)$ such that 
$$
\sup_{z \in \mathbb{R}^d, r<R} r^{-2d}  \int_{B_r(z)} \int_{B_r(z)} |g(x)-g(y)| \,dx dy \leq  \omega(R),
$$
for all $R>0$, where $\omega$ is a positive continuous function on $[0, \infty)$ depending on $g$ and $\omega(0)=0$. For each open ball $B$, denote by $VMO(B)$ the set of all functions $f \in L^1(B)$ such that $\widetilde{f}|_B= f$ for some $\widetilde{f} \in VMO$. Denote by $VMO_{loc}$ the set of all functions $h \in L^1_{loc}(\mathbb{R}^d)$ such that $h|_B \in VMO(B)$ for each open ball $B$ in $\mathbb{R}^d$.
\end{defn}
By the Poincar\'{e} inequality $W^{1,d}(\mathbb{R}^d) \subset VMO$ and by the uniform continuity of functions in $C_0(\mathbb{R}^d)$, we get $C_0(\mathbb{R}^d) \subset VMO$. Moreover, by extension arguments, it holds that $W^{1,d}_{loc}(\mathbb{R}^d) \cup C(\mathbb{R}^d) \subset VMO_{loc}$.\\
\centerline{}
\noindent
Additionally, let $q \in [1, \infty]$, $U$ be an open set in $\R^d$ and $\nu$ be a measure $\nu$ on $\mathcal{B}(\mathbb{R}^d)$. Define $L^q(U,\R^d, \nu):=\{ \bold{F}=(f_1,\dots,f_d) : \, f_i\in L^q(U, \nu), 1\le i\le d\}$, equipped with the norm, $\| \mathbf{F} \|_{L^q(U, \R^d,\nu)}  := \| \|\mathbf{F}\| \|_{L^q(U, \nu)}, \,  \mathbf{F} \in L^q(U, \R^d, \nu)$. 
Denote $L^q(U, \mathbb{R}^d, dx)$ by $L^q(U, \mathbb{R}^d)$ with the norm
$$
\|\bold{F}\|_{L^q(U)}  =\| \bold{F} \|_{L^q(U, \mathbb{R}^d, dx)}.
$$
Let $I$ be an open interval on $\R$ and $p, q \in [1, \infty]$. Denote by $L^{p,q}(U \times I)$ the space of all Borel measurable functions $f$ on $U \times I$ for which 
$$
\|f\|_{L^{p,q}(U \times I)}:=\| \|f(x,  t )\|_{L^{p}(U, dx)} \|_{L^{q}(I, dt)}< \infty,
$$
For a locally integrable function $g$ on $U \times I$ and  $i \in \{1, \dots d  \}$, we denote by $\partial_i g$ the $i$-th weak spatial derivative on $U \times I$ and by $\partial_t g$ the weak time derivative on  $U \times I$, provided it exists. 
For $r \in [1, \infty)$, denote by $H^{2,r}(U)$ the set of all functions $u \in L^r(U)$ such that $\partial_i u, \partial_{i} \partial_j u \in L^r(U)$ for all $1 \leq i,j \leq d$ equipped with the norm
$
\|u\|_{H^{2,r}(U)}:= \big( \|u\|^r_{L^r(U)} +\sum_{i=1}^d \|\partial_i u \|^r_{L^r(U)} + \sum_{i,j=1}^d  \|\partial_i \partial_j u\|^r_{L^r(U)} 	\big )^{1/r}.
$
For $p,q  \in [1, \infty]$, let $W^{2,1}_{p,q}( U \times I)$ be the set of all locally integrable functions
$g: U \times I \rightarrow \R$ such that $\partial_t g,\, \partial_i g,\, \partial_i \partial_j g \in L^{p,q}( U \times I)$ for all $1 \leq i,j \leq d$. We further let  $L^{p}(U \times I):=L^{p,p}(U \times I)$, and $W^{2,1}_{p}( U \times I ):= W^{2,1}_{p,p}( U \times I)$.

\section{The main results}\label{themainresult}
\subsection{Well-posedness}
Let us consider the following set of conditions: \\ \\
{\bf (C)}: $d\ge 2$, $\widehat{\mathbf{H}}:\R^d\to \R^d$ is measurable and (pointwisely) locally bounded. $A =(a_{ij})_{1 \leq i,j \leq d}$ is a symmetric matrix of measurable functions which is (pointwise everywhere) locally uniformly strictly elliptic, i.e. for each open ball $B$, there exist strictly positive constants $\lambda_B$ and $\Lambda_B>0$ such that 
\begin{equation} \label{uniellip}
\lambda_{B} \|\xi\|^2 \leq \langle A(x) \xi, \xi \rangle \leq \Lambda_B \|\xi\|^2, \quad \text{ for all } \xi \in \R^d\, \text{ and } \;  \text{all } x \in B.
\end{equation}
$a_{ij} \in VMO_{loc}$ for each $1 \leq i,j \leq d$ with $\text{div}A \in L^{d+1}_{loc}(\mathbb{R}^d, \mathbb{R}^d)$. Moreover, $\psi \in L^q_{loc}(\mathbb{R}^d)$, where $q \in (d+1, \infty]$ is fixed, $\psi >0$ a.e. and $\frac{1}{\psi}$ is (pointwisely) locally bounded with $\frac{1}{\psi}(x)\ge 0$ for any $x\in \R^d$.\\ \\
The following is our main theorem on well-posedness and it is a direct consequence of Propositions \ref{resolkrylov} and \ref{conservativenesscond}, Theorem \ref{weakexistence4}(ii) (the main result about weak existence, which is more general than what is stated in the theorem) and Theorem \ref {weifjowej} (the main result about uniqueness in law) below, choosing  $\mathbf{H}=\widehat{\mathbf{H}}-\frac{1}{2\psi} \text{div}A \in L^{d+1}_{loc}(\R^d, \mathbb{R}^d)$ in {\bf (C3)} of Section \ref{Section 6}.
\begin{theorem}\label{maintheorem}
Assume {\bf (C)}. Moreover, assume that there exist constants $N_0 \in \N$ and $M>0$ such that
\begin{eqnarray}\label{conscondit}
-\frac{\langle \frac{1}{\psi}(x)A(x)x, x \rangle}{\left \| x \right \|^2}+ \frac{\mathrm{trace}(\frac{1}{\psi}(x)A(x))}{2}+ \big \langle\widehat{\mathbf{H}} (x), x \big \rangle  \leq M \|x\|^2 (\ln\|x\|  +1), \quad \text{a.e. on}\  \mathbb{R}^d \setminus \overline{B}_{N_0},
\end{eqnarray}
or more generally that the Hunt process $\M$ occurring in Theorem \ref{weakexistence4} is non-explosive. Let $(\sigma_{ij})_{1 \le i,j \le d}$ be a matrix of measurable functions such that $\sigma(x) \sigma(x)^T =A(x)$ for all $x \in \mathbb{R}^d$.
Set 
$$
\widehat{\sigma}=\sqrt{\frac{1}{\psi}}\cdot \sigma \;\; \text{ , i.e. }\widehat{\sigma}_{ij}=\sqrt{\frac{1}{\psi}}\cdot \sigma_{ij}, \;\;\; 1 \leq i,j \leq d.
$$
Let $y \in \mathbb{R}^d$ be arbitrary. Then, there exists a filtered probability space  $(\Omega, (\F_t)_{t \ge 0}, \F, \P_{y})$  carrying a $d$-dimensional Brownian motion $(W_t)_{t \ge 0}$ and a (strong Markov) weak solution $(X_t)_{t \ge 0}$ to 
\begin{equation} \label{underlyingsde}
X_t = y+  \int_0^t \widehat{\sigma} (X_s) \, dW_s+   \int^{t}_{0}  \widehat{\mathbf{H}} (X_s) \, ds, \;\; \quad 0\le  t <\infty
\end{equation}
such that
\begin{equation} \label{spendingzero}
\int_0^{\infty} 1_{\{\sqrt{\frac{1}{\psi}}=0\}}(X_s) ds =0, \quad \text{ $\P_{y}$-a.s}. 
\end{equation}
Uniqueness in law holds in the following sense: if there exists a filtered probability space
$(\widetilde{\Omega}, (\widetilde{\F}_t)_{t \ge 0}, \widetilde{\F}, \widetilde{\P}_{y})$ carrying a $d$-dimensional Brownian motion $(\widetilde{W}_t)_{t \ge 0}$ and a weak solution $(\widetilde{X}_t)_{t \ge 0}$ to \eqref{underlyingsde} such that
\begin{equation} \label{zerospen}
\int_0^{\infty} 1_{\{\sqrt{\frac{1}{\psi}}=0\}}(\widetilde{X}_s) ds =0, \quad \text{ $\widetilde{\P}_{y}$-a.s.},
\end{equation}
then
$$
\P_y \circ X^{-1} = \widetilde{\P}_y \circ \widetilde{X}^{-1}\ \text{ on } \ \ \mathcal{B}(C([0, \infty), \R^d)).
$$
Moreover, for any weak solution $(\widetilde{X}_t)_{t \ge 0}$ to \eqref{underlyingsde} as above starting form $y$ such that \eqref{zerospen} holds, the Krylov type estimate of Theorem \ref{krylovtype} can be complemented as follows: for any $g \in L^r(\mathbb{R}^d, \widehat{\mu})$, where $r \in [s, \infty]$ with $s \in (\frac{d}{2}, \infty)$ fixed such that $\frac{1}{q} + \frac{1}{s}<\frac{2}{d}$, $\frac{1}{\infty}:=0$, and where $\widehat{\mu}$ is as in Remark \ref{aboutrho} (see in particular the discussion in (ii) there), there exists a constant $C_{t,y,r}>0$ depending on $t,y, r$, such that
$$
\widetilde{\E}_y \Big [ \int_0^t g(\widetilde{X}_s) \, ds \Big ] \le C_{t,y,r} \|g\|_{L^r(\R^d, \widehat{\mu})}.
$$ 

\end{theorem}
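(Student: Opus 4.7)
The plan is to assemble four previously established results from the paper: Proposition \ref{resolkrylov}, Proposition \ref{conservativenesscond}, Theorem \ref{weakexistence4}(ii), and Theorem \ref{weifjowej}. The opening move is to put the SDE into the standard form of Section \ref{Section 6} by introducing the auxiliary drift $\mathbf{H} := \widehat{\mathbf{H}} - \frac{1}{2\psi}\,\text{div}\,A$. Assumption \textbf{(C)} forces $\mathbf{H} \in L^{d+1}_{loc}(\R^d,\R^d)$, since $\widehat{\mathbf{H}}$ is locally bounded, $\text{div}\,A \in L^{d+1}_{loc}$, and $\frac{1}{\psi}$ is locally bounded; combined with the $VMO_{loc}$ and local uniform ellipticity hypotheses on $A$ and the $L^q_{loc}$-bound on $\psi$, this is exactly condition \textbf{(C3)} of Section \ref{Section 6}, so every result of that section becomes available.

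For weak existence, I would then apply Theorem \ref{weakexistence4}(ii) to obtain a strong Markov Hunt process $\M$ on $\R^d$ whose transition function yields, for every starting point $y \in \R^d$, a weak solution to \eqref{underlyingsde}, provided $\M$ is non-explosive. Non-explosion is either assumed outright or established via Proposition \ref{conservativenesscond}: the growth bound \eqref{conscondit} is a Khasminskii-type condition, and applying the generator to a Lyapunov function comparable to $\ln(\|x\|^2+1)$ produces a supermartingale that rules out explosion at infinity. The requirement \eqref{spendingzero} then follows from the construction of $\M$ through the weighted reference measure $\widehat{\mu}$ (cf.\ Remark \ref{casemdandindepbor}(i)): since $\widehat{\mu}$ is absolutely continuous with respect to Lebesgue measure and $\{\sqrt{1/\psi}=0\}$ is a $d$-dimensional Lebesgue-null set, its occupation time vanishes $\P_y$-almost surely.

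For uniqueness in law, I would invoke Theorem \ref{weifjowej}, whose argument adapts the Stroock--Varadhan martingale-problem strategy. Given any weak solution $(\widetilde{X}_t)$ satisfying \eqref{zerospen}, one applies It\^{o}'s formula to $R_\alpha g(\widetilde{X}_t)$, where $R_\alpha$ is the resolvent of $\M$ evaluated on a sufficient class of test functions $g$, and deduces that expectations under $\widetilde{\P}_y$ are forced to coincide with those determined by $\M$. The step that makes this rigorous for $g$ ranging over $L^r$-classes is the local Krylov-type estimate of Theorem \ref{krylovtype}, which controls $\widetilde{\E}_y\big[\int_0^t g(\widetilde{X}_s)\,ds\big]$ in terms of $\|g\|_{L^r(\widehat{\mu})}$; it is precisely \eqref{zerospen} that is needed to absorb the singular $\frac{1}{\psi}$-factor in the generator along the paths of $\widetilde{X}$. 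Finally, the complementary Krylov estimate in the last display of the theorem is delivered by Proposition \ref{resolkrylov} for the canonical process $X$, and by the uniqueness in law just established, the bound transfers verbatim to every admissible $\widetilde{X}$. The principal hidden obstacle — the content of Theorem \ref{weakexistence4} itself — is the upgrade from quasi-everywhere to pointwise identification of the abstract Hunt process with a weak SDE solution from \emph{every} starting point $y$; this is where the sharpened elliptic regularity of Theorem \ref{mainregulthm}, built on \cite{DK11,KK19}, is indispensable.
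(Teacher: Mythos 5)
Your proposal is correct and follows essentially the same route as the paper, which derives Theorem \ref{maintheorem} precisely by choosing $\mathbf{H}=\widehat{\mathbf{H}}-\frac{1}{2\psi}\,\mathrm{div}\,A$ so that $\mathbf{G}=\widehat{\mathbf{H}}$ and {\bf (C)} yields {\bf (C3)}, and then combining Theorem \ref{weakexistence4}(ii) with Proposition \ref{conservativenesscond} (existence and \eqref{spendingzero} via Remark \ref{casemdandindepbor}(i)), Theorem \ref{weifjowej} (uniqueness), and Proposition \ref{resolkrylov} plus uniqueness in law (the complemented Krylov estimate). The only slight inaccuracy is incidental: the proof of Theorem \ref{weifjowej} uses the parabolic solutions $u_f(\cdot,T-\cdot)$ from Theorem \ref{fvodkoko} and the local It\^o formula rather than resolvent test functions $R_\alpha g$, but since you invoke that theorem as a black box this does not affect your argument.
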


\begin{remark} 
In Theorem \ref{maintheorem}, the $d \times d$ matrix of functions $\sigma$ satisfying $\sigma \sigma^T = A$ can be replaced by a $d \times m$ matrix of functions with $m \geq d$ (see Remark \ref{casemdandindepbor}(ii)). \\
\end{remark}

\begin{remark}\label{maindifferences}
The main differences of Theorem \ref{maintheorem} and \cite[Theorem 13]{LT19de} are the following:
\begin{itemize}
\item[(a)]
We do not need to impose the condition that $a_{ij} \in H^{1,p}_{loc}(\mathbb{R}^d)$, $p>d$ nor $a_{ij} \in C(\R^d)$, $1 \leq i,j \leq d$. Instead $a_{ij}\in VMO_{loc}$ is assumed. For instance, let $\zeta \in L^{\infty}(\mathbb{R})\cap VMO_{loc}$ with $\zeta \geq c$ for some constant $c>0$ and assume that $\zeta$ is discontinuous and has no Sobolev differentiability (i.e. $\zeta \notin H^{1,1}_{loc}(\mathbb{R})$).
Moreover, let $a_{11}(x_1, \ldots, x_d)=\zeta(x_2)$, $a_{22}(x_1, \ldots, x_d)=\zeta(x_3), \ldots, a_{d-1 \, d-1}(x_1, \ldots, x_d)=\zeta(x_d)$, $a_{dd}(x_1, \ldots, x_d)=\zeta(x_1)$ and $a_{ij}=0$ if $i \neq j$. Then $a_{ii} \notin H^{1,1}_{loc}(\mathbb{R}^d)$ but ${\rm{div}}  A = 0$.  In particular, in contrast to  \cite{LT19de}, the $a_{ij}$ do not necessarily need to have continuous versions, nor to be in $H^{1,1}_{loc}(\mathbb{R}^d)$.

\item[(b)]
The previous condition that $a_{ij}\in H^{1,2d+2}_{loc}(\mathbb{R}^d)$, $1\le i,j \le d$, and that $\psi \in L^q_{loc}(\mathbb{R}^d)$, $q>2d+2$ is relaxed to the one that ${\rm{div} } A\in L^{d+1}_{loc}(\mathbb{R}^d, \R^d)$, and $\psi \in L^q_{loc}(\mathbb{R}^d)$, $q>d+1$.

\item[(c)] Uniqueness in law can be considered for each fixed initial condition $y\in \R^d$ separately, in contrast to \cite{LT19de} where uniqueness in law must be considered for all initial conditions $y\in \R^d$ simultaneously.
\end{itemize}
\end{remark}

\subsection{Weak existence}
As already indicated in the paragraph right before Theorem \ref{maintheorem},  weak existence can be realized under more general assumptions than {\bf (C)}. The most general are presented in Theorem \ref{weakexistence4}(ii), but are somehow difficult to catch at first sight. Here we present a special case of Theorem \ref{weakexistence4}(ii) (see also Example \ref{forexiste}(iv)) in order to provide for the convenience of the reader an easily accessible weak existence result.

\begin{cor}[of Theorem \ref{weakexistence4}]\label{weakexcoro}
Let $A=(a_{ij})_{1 \leq i,j \leq d}$ be a symmetric matrix of measurable functions which is (pointwise almost everywhere) locally uniformly strictly elliptic (for the corresponding definition see condition {\bf (C1)} of Section \ref{fewfore} below).
Let further $d,p,q,s$ and $\psi$ be as in {\bf (C1)}, $\widehat{\bold{H}} \in L_{loc}^{\frac{sq}{q-1}}(\mathbb{R}^d, \mathbb{R}^d)$, $\psi \widehat{\bold{H}} \in L^p_{loc}(\mathbb{R}^d, \mathbb{R}^d)$, and ${\rm div} A \in L^p_{loc}(\mathbb{R}^d, \mathbb{R}^d)$.  Assume that \eqref{conscondit} holds or more generally that the Hunt process $\M$ occurring in Theorem \ref{weakexistence4} is non-explosive. Let $(\sigma_{ij})_{1 \le i,j \le d}$ be any matrix of measurable functions such that $\sigma(x) \sigma(x)^T =A(x)$ for a.e. $x \in \mathbb{R}^d$. Let $y \in \mathbb{R}^d$. Then, there exists a filtered probability space  $(\Omega, (\F_t)_{t \ge 0}, \F, \P_{y})$  carrying a $d$-dimensional Brownian motion $(W_t)_{t \ge 0}$ and a (strong Markov) weak solution $(X_t)_{t \ge 0}$ to \eqref{underlyingsde} satisfying \eqref{spendingzero}. Moreover, for any $g \in L^r(\mathbb{R}^d, \widehat{\mu})$ with $r \in [s, \infty]$, the Krylov type estimate given at the end of Theorem \ref{maintheorem} holds for this solution. 
\end{cor}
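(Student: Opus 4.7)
The plan is to deduce the corollary from Theorem \ref{weakexistence4}(ii) by verifying that the hypotheses listed fall under its scope. First I would rewrite the drift in the divergence-form convention used throughout the paper by setting $\mathbf{H} := \widehat{\mathbf{H}} - \frac{1}{2\psi}\mathrm{div}\, A$, so that the underlying generator takes the form handled in Sections \ref{fewfore}--\ref{effwegewe}. The local boundedness of $\frac{1}{\psi}$ coming from \textbf{(C1)}, combined with $\mathrm{div}\, A \in L^p_{loc}(\mathbb{R}^d,\mathbb{R}^d)$, yields $\frac{1}{2\psi}\mathrm{div}\, A \in L^p_{loc}(\mathbb{R}^d,\mathbb{R}^d)$, while the remaining assumptions $\widehat{\mathbf{H}} \in L^{sq/(q-1)}_{loc}(\mathbb{R}^d,\mathbb{R}^d)$ and $\psi\widehat{\mathbf{H}} \in L^p_{loc}(\mathbb{R}^d,\mathbb{R}^d)$ deliver exactly the two Lebesgue-space bounds that Theorem \ref{weakexistence4}(ii) requires on $\mathbf{H}$ and on $\psi\mathbf{H}$ respectively.

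With these verifications in place, Theorem \ref{weakexistence4}(ii) produces the strong Markov Hunt process $\mathbf{M}$ and, via the canonical construction on an enlarged filtered probability space, a weak solution $(X_t)_{t \ge 0}$ of \eqref{underlyingsde} starting from the arbitrary $y\in\mathbb{R}^d$; property \eqref{spendingzero} is part of that theorem's conclusion (cf.\ Remark \ref{casemdandindepbor}(i)). Proposition \ref{conservativenesscond} then turns the integral condition \eqref{conscondit} into non-explosion of $\mathbf{M}$, or else the alternative non-explosion hypothesis is used directly, so that $(X_t)_{t \ge 0}$ is genuinely defined on all of $[0,\infty)$.

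For the Krylov-type estimate, I would appeal to the final assertion of Theorem \ref{maintheorem}, whose derivation draws only on the semigroup/resolvent estimates of Section \ref{fewfore} (which are available already under \textbf{(C1)}) and on the fact that the constructed solution satisfies \eqref{spendingzero}; the endpoint case $r=\infty$ follows at once once one observes that the occupation measure of $X$ under $\mathbb{P}_y$ is absolutely continuous with respect to $\widehat{\mu}$, a consequence of the generalized Dirichlet form framework. The main subtlety I expect is confirming that the relaxation from the pointwise ellipticity of \textbf{(C)} to the a.e.\ ellipticity of \textbf{(C1)} still yields a construction valid from \emph{every} starting point $y\in\mathbb{R}^d$; this is precisely the purpose of the improved elliptic and parabolic regularity results developed in Section \ref{someanlyticresults}, and is exactly what allows Theorem \ref{weakexistence4}(ii) to go beyond its \textbf{(C)}-version in Theorem \ref{maintheorem}.
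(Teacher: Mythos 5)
Your overall route is the paper's own: set $\mathbf{H}=\widehat{\mathbf{H}}-\frac{1}{2\psi}{\rm div}A$, $C\equiv 0$, check that the hypotheses put you in the situation of Example \ref{forexiste}(iv) (so that {\bf (C1)} and, via Remark \ref{domgen}(ii), {\bf (C2)} hold and $\mathbf{G}=\widehat{\mathbf{H}}$), then invoke Theorem \ref{weakexistence4}(ii) together with Proposition \ref{conservativenesscond} for existence from every $y$, and Remark \ref{casemdandindepbor}(i) for \eqref{spendingzero}. Two points deserve correction. First, the verification of {\bf (C2)} is not that Theorem \ref{weakexistence4} "requires a bound on $\mathbf{H}$ in $L^{sq/(q-1)}_{loc}$"; what is needed is $\mathbf{G}\in L^s_{loc}(\R^d,\R^d,\widehat{\mu})$, and this is obtained from $\mathbf{G}=\widehat{\mathbf{H}}\in L^{sq/(q-1)}_{loc}(\R^d,\R^d)$ by H\"older's inequality with $\rho\psi\in L^q_{loc}(\R^d)$, exactly as in Remark \ref{domgen}(ii); you should say this explicitly rather than treat the two integrability assumptions as literal hypotheses of the theorem.

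Second, and more substantively, the Krylov estimate cannot be obtained by "appealing to the final assertion of Theorem \ref{maintheorem}": that assertion is stated under {\bf (C)} (in particular pointwise ellipticity, $a_{ij}\in VMO_{loc}$, $q>d+1$ and a locally bounded drift), none of which is assumed in the corollary, and its proof for \emph{arbitrary} solutions satisfying \eqref{zerospen} goes through the uniqueness result Theorem \ref{weifjowej}, which needs {\bf (C3)} (local boundedness of $\mathbf{G}=\widehat{\mathbf{H}}$) and is simply unavailable here. The corollary only claims the estimate for the \emph{constructed} solution, i.e. the Hunt process of Theorem \ref{existhunt4}, and for that solution the correct and sufficient reference is Proposition \ref{resolkrylov}, which holds under {\bf (C2)} and already covers the full range $r\in[s,\infty]$; in particular no separate occupation-measure argument is needed for $r=\infty$, and \eqref{spendingzero} plays no role in this step (it is a consequence of the same proposition via Lemma \ref{condforpsitohold}, not an ingredient). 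With the citation corrected in this way your argument closes and coincides with the paper's intended proof.
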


\section{Existence of an infinitesimally invariant measure and strong Feller properties} \label{fewfore}

Here we state the following set of conditions which will be assumed in this section.
\begin{itemize}
\item[\bf (C1)]
For some $d \geq 2$, $q \in (\frac{d}{2}, \infty]$ is fixed with $q \geq 2$. $s \in (\frac{d}{2}, \infty)$ is fixed with $\frac{1}{q} + \frac{1}{s}<\frac{2}{d}$, $\frac{1}{\infty}:=0$.
$\psi \in L^q_{loc}(\R^d)$ satisfies $\psi>0$ a.e. on $\mathbb{R}^d$ and $\frac{1}{\psi} \in L_{loc}^{\infty}(\R^d)$. $p \in (d, \infty)$ is fixed and $\psi \bold{H} \in L^p_{loc}(\mathbb{R}^d, \mathbb{R}^d)$ where $\bold{H}$ is a measurable vector field on $\mathbb{R}^d$.
$A=(a_{ij})_{1 \leq i,j \leq d}$ is a symmetric matrix of measurable functions which is (pointwise almost everywhere) locally uniformly strictly elliptic, i.e. for each open ball $B$, there exist strictly positive constants $\lambda_B$ and $\Lambda_B>0$ such that 
\begin{equation} \label{uniellipa.e.}
\lambda_{B} \|\xi\|^2 \leq \langle A(x) \xi, \xi \rangle \leq \Lambda_B \|\xi\|^2, \quad \text{ for all } \xi \in \R^d\, \text{ and a.e. } x \in B.
\end{equation}
and $\text{div} A \in L_{loc}^2(\mathbb{R}^d, \mathbb{R}^d)$. $C=(c_{ij})_{1 \leq i,j \leq d}$, with $c_{ij}\in L_{loc}^{\infty}(\R^d)$, $1\le i,j\le d$, is an anti-symmetric matrix (i.e. $c_{ij} = -c_{ji}$) such that $\text{div}\, C \in L^2_{loc}(\mathbb{R}^d, \mathbb{R}^d)$. For each $1 \leq i,j \leq d$, $a_{ij}$,  $c_{ij} \in VMO_{loc}$, and the partial differential operator $(L, C_0^{\infty}(\mathbb{R}^d))$ is defined by
\begin{align*}
Lf  & = \frac{1}{2 \psi } \text{div} \left(  (A+C) \nabla f  \right)  + \langle \mathbf{H}, \nabla f \rangle, \\
&=\frac12 \text{trace} \big(\widehat{A} \nabla^2 f \big) + \left \langle \mathbf{G}, \nabla f \right \rangle,  \; \quad f \in C_0^{\infty}(\mathbb{R}^d),
\end{align*}
where
$$
\widehat{A} = \frac{1}{\psi} A \; \text{ and }\; \mathbf{G} = \frac{1}{2 \psi} \text{div}(A +C)    + \mathbf{H}.
$$ 
\end{itemize}

\begin{theorem} \label{helholmop}
Under the assumption {\bf (C1)}, there exists $\rho \in H_{loc}^{1,p}(\R^d) \cap C(\R^d)$ satisfying $\rho(x)>0$ for all $x \in \R^d$ such that $\mathbf{G}-\beta^{\rho, A, \psi} \in L^2_{loc}(\mathbb{R}^d, \mathbb{R}^d, \rho \psi dx)$, $\rho \psi  \left( \mathbf{G}-\beta^{\rho, A, \psi}  \right) \in L_{loc}^2(\R^d, \R^d)$ (equivalently $\psi\left (\mathbf{G}-\beta^{\rho, A, \psi}\right ) \in L^2_{loc}(\mathbb{R}^d, \mathbb{R}^d, \rho dx)$) and furthermore
\begin{equation} \label{helm}
\int_{\R^d} \langle \mathbf{G}-\beta^{\rho, A, \psi}, \nabla \varphi \rangle \rho \psi dx =0, \quad \text{for all } \varphi \in C_0^{\infty}(\R^d).
\end{equation}
In particular, under the assumption {\bf (C1)},  assumption \cite[{\bf (A)}]{LT24a} holds for $\widehat{\mu} = \rho \psi dx$ with $\rho$ as in this theorem, $\mathbf{B}= \mathbf{G}-\beta^{\rho, A, \psi}$ and
$$
Lf = L^0 f + \langle \mathbf{B}, \nabla  f \rangle,  \; \quad f \in C_0^{\infty}(\mathbb{R}^d),
$$
where
$$
 L^0 f=\frac12{\rm trace}(\widehat{A} \nabla^2 f) + \langle \beta^{\rho, A, \psi}, \nabla f \rangle.
$$
Thus, by \cite[Theorem 3.5]{LT24a}, there exists a closed extension $(\overline{L}, D(\overline{L}))$ of $(L, C_0^{\infty}(\mathbb{R}^d))$ which generates a sub-Markovian $C_0$-semigroup of contractions $(T_t)_{t>0}$ and a sub-Markovian $C_0$-resolvent of contractions $(G_{\alpha})_{\alpha>0}$ on $L^1(\mathbb{R}^d, \widehat{\mu})$. Both, $(T_t)_{t>0}$ and  $(G_{\alpha})_{\alpha>0}$ when restricted to $L^1(\mathbb{R}^d, \widehat{\mu})_b$ can be extended 
to a sub-Markovian $C_0$-semigroup of contractions and a sub-Markovian $C_0$-resolvent of contractions on each $L^r(\R^d, \widehat{\mu})$, $r\in [1,\infty)$, respectively. In particular, $(T_t)_{t >0}|_{L^1(\mathbb{R}^d, \widehat{\mu})_b}$ and  $(G_{\alpha})_{\alpha>0}|_{L^1(\mathbb{R}^d, \widehat{\mu})_b}$ extend to a sub-Markovian semigroup and a sub-Markovian resolvent of contractions on $L^{\infty}(\mathbb{R}^d, \widehat{\mu})$, denoted again by $(T_t)_{t>0}$ and $(G_{\alpha})_{\alpha>0}$, respectively.

\end{theorem}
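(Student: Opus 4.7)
The plan is to obtain $\rho$ as the density of an infinitesimally invariant measure associated with the divergence-form part of $L$. First I would rewrite \eqref{helm} explicitly. Expanding $\mathbf{G} = \tfrac{1}{2\psi}\mathrm{div}(A+C)+\mathbf{H}$ and $\beta^{\rho,A,\psi} = \tfrac{1}{2\psi}\mathrm{div} A + \tfrac{1}{2\psi\rho}A\nabla\rho$, the integrand of \eqref{helm} collapses to
$$
\tfrac12 \rho \langle \mathrm{div} C, \nabla\varphi\rangle + \rho\psi \langle \mathbf{H}, \nabla\varphi\rangle - \tfrac12\langle A\nabla\rho, \nabla\varphi\rangle,
$$
so \eqref{helm} is equivalent to the weak divergence-form elliptic equation
$$
\tfrac12\,\mathrm{div}(A\nabla\rho) \;=\; \mathrm{div}\!\left(\rho\,\bigl(\tfrac12 \mathrm{div} C + \psi\mathbf{H}\bigr)\right) \quad \text{on } \R^d.
$$

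Next I would construct $\rho$ by domain exhaustion: on each ball $B_n$ solve this equation with a positivity-preserving boundary datum such as $\rho_n|_{\partial B_n}\equiv 1$. Under {\bf (C1)} the matrix $A$ is locally uniformly elliptic and $VMO_{loc}$, while $\mathrm{div} C \in L^2_{loc}$ and $\psi\mathbf{H}\in L^p_{loc}$ with $p>d$ supply an $L^p$ divergence-form forcing. The $L^p$-theory for divergence-form equations with VMO leading coefficient from \cite{DK11}, sharpened in \cite{KK19}, yields $\rho_n \in H^{1,p}(B_n)$; a Harnack-type comparison argument for such operators forces $\rho_n>0$ on $B_n$. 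After normalizing at a fixed point $x_0$, the same Harnack inequality produces uniform local two-sided bounds on $\rho_n$, hence uniform $H^{1,p}$-bounds on compacts, and (by Morrey embedding, using $p>d$) local equicontinuity. A diagonal subsequence then converges locally uniformly and weakly in $H^{1,p}_{loc}$ to some $\rho\in H^{1,p}_{loc}(\R^d)\cap C(\R^d)$ with $\rho>0$ everywhere, which weakly satisfies the global equation, i.e.\ \eqref{helm}.

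It remains to check the two integrability claims. From
$$
\mathbf{G}-\beta^{\rho,A,\psi} \;=\; \tfrac{1}{2\psi}\mathrm{div} C + \mathbf{H} - \tfrac{1}{2\psi\rho}A\nabla\rho,
$$
the local boundedness of $\tfrac{1}{\psi}$, $\tfrac{1}{\rho}$ and $A$, combined with $\mathrm{div} C \in L^2_{loc}$, $\mathbf{H} = \tfrac{1}{\psi}(\psi\mathbf{H}) \in L^p_{loc}$ and $\nabla\rho\in L^p_{loc}$, gives $\mathbf{G}-\beta^{\rho,A,\psi}\in L^2_{loc}(\R^d,\R^d)$. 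Multiplying by the weight $\rho\psi$ and using $\rho\in L^\infty_{loc}$, $\psi\in L^q_{loc}$ with $q\ge 2$, H\"older's inequality (with $p>d\ge 2$) yields $\rho\psi\bigl(\mathbf{G}-\beta^{\rho,A,\psi}\bigr)\in L^2_{loc}(\R^d,\R^d)$; the equivalence with $\psi\bigl(\mathbf{G}-\beta^{\rho,A,\psi}\bigr)\in L^2_{loc}(\R^d,\R^d,\rho\,dx)$, and with $\mathbf{G}-\beta^{\rho,A,\psi}\in L^2_{loc}(\R^d,\R^d,\rho\psi\,dx)$, is immediate since $\rho$ and $\tfrac{1}{\rho}$ are locally bounded.

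Finally, \eqref{helm} together with these integrability properties comprise exactly \cite[{\bf (A)}]{LT24a} for $\widehat\mu = \rho\psi\,dx$ and $\mathbf{B} = \mathbf{G} - \beta^{\rho,A,\psi}$, so the closed extension $(\overline L, D(\overline L))$ and the sub-Markovian $C_0$-semigroup/resolvent, together with their extensions to $L^r(\R^d,\widehat\mu)$ for $r\in[1,\infty)$, follow verbatim from \cite[Theorem 3.5]{LT24a}; the $L^\infty$-extension then comes by duality. The main obstacle I anticipate is the positivity-stable solvability step: the ``forcing'' couples linearly to the unknown $\rho_n$ inside the divergence, so existence on $B_n$ must be set up as a Fredholm/fixed-point problem powered by the sharp $H^{1,p}$-estimates of \cite{DK11,KK19}, and positivity has to be preserved in the passage to the limit, which is where the divergence-form Harnack inequality with VMO leading coefficient plays the decisive role.
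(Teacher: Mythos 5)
There is a genuine gap, and it sits at the very first step of your construction. Your algebraic rewriting of \eqref{helm} as $\tfrac12\,\mathrm{div}(A\nabla\rho)=\mathrm{div}\big(\rho(\tfrac12\mathrm{div}\,C+\psi\mathbf{H})\big)$ is correct as an identity, but under {\bf (C1)} the vector field $\mathrm{div}\,C$ is only assumed to lie in $L^2_{loc}(\R^d,\R^d)$, not in $L^p_{loc}$; hence the forcing $\rho(\tfrac12\mathrm{div}\,C+\psi\mathbf{H})$ is merely $L^2_{loc}$, and the $L^p$-theory of \cite{DK11,KK19} does not yield $\rho_n\in H^{1,p}(B_n)$, nor are the Harnack inequality, local boundedness/continuity and two-sided bounds you invoke available for a divergence-form equation whose forcing has only $L^2$ integrability (these require integrability above $d$). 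So the chain ``$H^{1,p}$-bounds $\Rightarrow$ equicontinuity $\Rightarrow$ positive continuous limit in $H^{1,p}_{loc}$'' breaks down whenever $C\not\equiv 0$. The structural point you miss is that the antisymmetric part must be kept inside the principal term: the correct equation to work with is $\int\langle\tfrac12(A+C^T)\nabla\rho-\rho\psi\mathbf{H},\nabla\varphi\rangle\,dx=0$, whose leading coefficient $A+C^T$ is still locally uniformly elliptic (since $\langle C^T\xi,\xi\rangle=0$), locally bounded and $VMO_{loc}$, and whose only divergence forcing is $\rho\psi\mathbf{H}\in L^p_{loc}$, $p>d$. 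This is exactly what the paper does: it quotes \cite[Theorem 2.27(i)]{LST22} for the existence of a continuous, everywhere positive $\rho\in H^{1,2}_{loc}$ solving that equation, upgrades to $\rho\in H^{1,p}_{loc}$ via Theorem \ref{mainregulthm}(iii) (together with Propositions \ref{basicpropvm} and \ref{vmoprop}), and only afterwards converts to \eqref{helm} by an integration by parts that uses the antisymmetry of $C$ (the term $\mathrm{trace}(\rho C\nabla^2\varphi)$ vanishes pointwise) and the definition of $\mathrm{div}\,C$.

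Two further remarks. First, even in the repaired formulation, you are re-deriving the existence result rather than citing it: solvability of the ball problems with the unknown inside the divergence (your Fredholm/fixed-point step) and preservation of strict positivity in the limit are exactly the substantive content of \cite[Theorem 2.27(i)]{LST22}, and your sketch leaves them unproven. Second, in the integrability check the cleanest route is to write $\rho\psi(\mathbf{G}-\beta^{\rho,A,\psi})=\tfrac{\rho}{2}\mathrm{div}\,C+\rho\psi\mathbf{H}-\tfrac12 A\nabla\rho$ and treat each term separately (each is $L^2_{loc}$ since $\rho$ is locally bounded, $\psi\mathbf{H},\nabla\rho\in L^p_{loc}$ with $p>d\ge 2$); the H\"older argument you gesture at with $\psi\in L^q_{loc}$ does not by itself give the $L^2_{loc}$ conclusion, although the statement itself is correct once the cancellation of $\tfrac1\psi$ against $\psi$ is made explicit.
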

\begin{proof}
By \cite[Theorem 2.27(i)]{LST22}, there exists $\rho \in H_{loc}^{1,2}(\mathbb{R}^d) \cap C(\mathbb{R}^d)$ satisfying $\rho(x)>0$ for all $x \in \R^d$ such that 
\begin{equation} \label{basicintei}
\int_{\R^d}\big \langle \frac12 (A+C^T) \nabla \rho - \rho \psi \bold{H}, \nabla\varphi \big \rangle dx =0, \quad \text{ for all } \varphi \in C_0^{\infty}(\R^d).
\end{equation}
Since it holds that
$$
\int_{\R^d}\big \langle \frac12 (A+C^T) \nabla \rho, \nabla\varphi \big \rangle dx =\int_{\mathbb{R}^d}\langle \rho \psi \bold{H}, \nabla \varphi \rangle dx, \quad \text{ for all } \varphi \in C_0^{\infty}(\R^d),
$$
we obtain $\rho \in H^{1,p}_{loc}(\mathbb{R}^d)$ by Proposition \ref{basicpropvm}(iii), Proposition \ref{vmoprop}(ii) and Theorem \ref{mainregulthm}(iii). Then, \eqref{basicintei} implies that  for any $\varphi \in C_0^{\infty}(\mathbb{R}^d)$
\begin{align*}
0&= \int_{\R^d}\big \langle \frac12 A \nabla \rho - \rho \psi \bold{H}, \nabla\varphi \big \rangle dx  + \int_{\mathbb{R}^d} \frac12  \langle \nabla \rho, C \nabla \varphi  \rangle dx \\
&= \int_{\R^d}\big \langle \frac12 A \nabla \rho - \rho \psi \bold{H}, \nabla\varphi \big \rangle dx  - \int_{\mathbb{R}^d} \frac12  \text{trace} (\rho C \nabla^2 \varphi)  +  \big\langle \frac12 \rho \, \text{div}\,C, \nabla \varphi \big \rangle dx \\
&= \int_{\mathbb{R}^d} \big \langle -\frac{1}{2\psi} \text{div} C - \mathbf{H} + \frac{1}{2 \rho \psi}A \nabla \rho, \nabla \varphi \big \rangle \rho \psi \,dx \\
&= \int_{\mathbb{R}^d}  \langle - \mathbf{G}  + \beta^{\rho, A, \psi}, \nabla \varphi \rangle \rho \psi \,dx.
\end{align*}
Thus for $\bold{B} := \bold{G} - \beta^{\rho, A, \psi} = \frac{1}{2\psi} \text{div}\,C + \mathbf{H} -\frac{1}{2 \rho \psi}A \nabla \rho$,  \eqref{helm} holds with $\rho \psi \mathbf{B} \in L_{loc}^2(\mathbb{R}^d, \mathbb{R}^d)$ and $\mathbf{B} \in L^2_{loc}(\mathbb{R}^d, \mathbb{R}^d, \rho \psi dx)$. Using in particular the results of \cite{LT24a}, the remaining statements follow immediately.
\end{proof}
\begin{remark}\label{aboutrho}
(i) If {\bf (C1)} holds for given $\psi$, $\bold{H}$, $A=(a_{ij})_{1 \leq i,j \leq d}$, $C=(c_{ij})_{1 \leq i,j \leq d}$, we pick and fix an arbitrary density $\rho$ as in Theorem \ref{helholmop} and hereby also fix $\mu=\rho\,dx$, $\widehat{\mu}=  \psi\,d\mu$. In particular, $\rho, \psi, A, \mathbf{B}$, where $\mathbf{B}$ is defined as in Theorem \ref{helholmop} fulfill \cite[{\bf (A)}]{LT24a} and therefore all results of \cite{LT24a} which hold under assumption  \cite[{\bf (A)}]{LT24a}, in particular those stated at the end of Theorem \ref{helholmop}, are applicable. \\
(ii) In contrast to \cite{LT24a} where $\rho$ is explicitly given in assumption {\bf (A)}, here $\rho$ is abstract and there may be several $\rho$ fulfilling the same assumptions as \cite[Theorem 2.27(i)]{LST22}, which is used in the proof Theorem \ref{helholmop}, is only about existence. Nonetheless, the reader may have noted that the first order perturbation $\mathbf{G}$ of the linear operator $L$ does not depend on $\rho$ and moreover all following statements are at least qualitatively locally equivalent for possibly different $\rho$, as each $\rho$ is continuous and locally bounded above and away from zero.
\end{remark}
\begin{theorem} \label{rescondojc}
Assume {\bf (C1)} and let $f \in \cup_{r \in [s,\infty]} L^r(\R^d,\widehat{\mu})$. Let $(G_{\alpha})_{\alpha>0}$ be as described in Theorem 3.1. Then $G_{\alpha} f$ has a locally H\"{o}lder continuous $dx$-version $R_{\alpha} f$ on $\R^d$. Furthermore for any open balls $U$, $V$ satisfying $\overline{U} \subset V$, we have the following estimate
\begin{equation} \label{resestko}
\|  R_{\alpha} f \|_{C^{0, \gamma}(\overline{U})} \le c_2 \left ( \| G_{\alpha}f \|_{L^1(V,\widehat{\mu})}+ \| f \|_{L^s(V, \widehat{\mu})}\right ),
\end{equation}
where $c_2>0$, $\gamma \in (0,1)$ are constants which only depend on $A$, $C$, $\psi$, $\bold{H}$, $U$ and $V$.
\end{theorem}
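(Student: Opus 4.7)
The plan is to identify a weak elliptic PDE satisfied by $u:=G_\alpha f$ and then invoke the main elliptic regularity result of Section \ref{someanlyticresults}, combined with a Moser iteration at the end to convert the $L^2$-dependence of $u$ in the natural regularity estimate into the $L^1(V,\widehat{\mu})$-dependence appearing in \eqref{resestko}.

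First I would observe that $u\in L^r(\R^d,\widehat{\mu})\cap L^1_{loc}(\R^d,dx)$: the first inclusion follows from the $L^r$-contractivity of $(G_\alpha)_{\alpha>0}$ asserted in Theorem \ref{helholmop}, and the second because $\rho\in C(\R^d)$ is strictly positive while $\frac{1}{\psi}\in L^\infty_{loc}(\R^d)$, so $\rho\psi$ is locally bounded above and below away from zero. Using that $u\in D(\overline{L})$ with $\alpha u-\overline{L}u=f$ in $L^1(\R^d,\widehat{\mu})$, together with the Helmholtz identity \eqref{helm} of Theorem \ref{helholmop} (which provides the cancellation removing the density $\rho$), one obtains (as in the analogous regularity argument of \cite{LT24a}) that $u\in H^{1,2}_{loc}(\R^d)$ and satisfies the local weak equation
\begin{equation*}
\int_{\R^d}\tfrac{1}{2}\langle(A+C)\nabla u,\nabla\varphi\rangle\,dx=\int_{\R^d}\bigl(\psi f-\alpha\psi u+\langle\psi\mathbf{H},\nabla u\rangle\bigr)\varphi\,dx,\quad\varphi\in C_0^\infty(\R^d).
\end{equation*}

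Second, I would apply the divergence-form part of Theorem \ref{mainregulthm} to this equation. Its hypotheses are met: $A+C$ has $VMO_{loc}$ entries and $A$ is locally uniformly strictly elliptic; by H\"older's inequality the source $\psi f$ lies in $L^{qr/(q+r)}_{loc}(\R^d)$ with $qr/(q+r)>d/2$ thanks to $\frac{1}{q}+\frac{1}{s}<\frac{2}{d}$ and $r\ge s$; the coefficient $\psi\mathbf{H}$ of the first-order term lies in $L^p_{loc}(\R^d,\R^d)$ with $p>d$; and the zeroth-order coefficient $\alpha\psi$ lies in $L^q_{loc}(\R^d)$. This produces a locally H\"older-continuous representative $R_\alpha f$ of $u$ with an estimate of the shape $\|R_\alpha f\|_{C^{0,\gamma}(\overline{U})}\le c(\|u\|_{L^2(V',\widehat{\mu})}+\|f\|_{L^s(V',\widehat{\mu})})$ on intermediate balls $\overline{U}\subset V'$ with $\overline{V'}\subset V$. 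To replace the $L^2$-norm of $u$ on the right-hand side by the $L^1(V,\widehat{\mu})$-norm demanded in \eqref{resestko}, I would invoke a Moser iteration / local boundedness estimate for subsolutions of the same uniformly elliptic divergence-form equation, giving $\|u\|_{L^\infty(V')}\le c'(\|u\|_{L^1(V,\widehat{\mu})}+\|f\|_{L^s(V,\widehat{\mu})})$, and then combine it with the previous H\"older estimate applied on a further-shrunken pair of balls.

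The main obstacle is the regularity step: because $\psi\mathbf{H}$ is only $L^p_{loc}$ with $p>d$ and is not bounded, the classical De Giorgi--Nash--Moser theory does not directly apply, and one really needs the $VMO$/Sobolev elliptic theory of \cite{DK11,KK19} packaged in Theorem \ref{mainregulthm}. Moreover, the fine balance between the exponents $q,s,p$ and the dimension $d$ must be tracked carefully to ensure that all the intermediate source and coefficient norms sit in the correct $L^t_{loc}$ classes for Theorem \ref{mainregulthm} to apply.
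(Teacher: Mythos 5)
Your proposal follows essentially the same route as the paper: identify the weak divergence-form equation satisfied by $u=G_\alpha f$ via the resolvent equation and the identity \eqref{helm} of Theorem \ref{helholmop}, and then feed it into the elliptic regularity package of Theorem \ref{mainregulthm} (built on \cite{DK11,KK19}), finishing with H\"older's inequality to pass to the $\widehat{\mu}$-norms. Three remarks. First, the paper does not divide out the density: it keeps the weighted equation \eqref{feppwikmee2} with $\frac12\rho(A+C)\nabla u$ and $\widehat{\mathbf{F}}=-\rho\psi\mathbf{H}+\frac12(A+C^T)\nabla\rho$; your unweighted form is equivalent (the $\frac{1}{2\rho}(A+C^T)\nabla\rho$-terms cancel when testing against $\varphi/\rho$), but you should justify that the weak identity extends to test functions of the form $\varphi/\rho\in H^{1,p}_0\cap C$, which is a small extra step the paper avoids. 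Second, your concluding Moser iteration is superfluous: Theorem \ref{mainregulthm}(ii) already gives the H\"older bound with $\|u\|_{L^1(V)}$ (not an $L^2(\widehat{\mu})$-norm) on the right-hand side, so the conversion to $\|G_\alpha f\|_{L^1(V,\widehat{\mu})}$ is just H\"older's inequality together with local boundedness of $\rho\psi$ from below; one does, as you anticipate, have to shrink $p$ towards $d$ so that $\frac{pd}{p+d}\le(\frac1q+\frac1s)^{-1}$, which the paper makes explicit. Third, and this is the one genuine gap: the weak formulation \eqref{feppwikmee} is available from \cite[Theorem 3.5(i)]{LT24a} only for $G_\alpha f\in D(\overline{L})_b$, i.e.\ initially for $f\in L^1(\R^d,\widehat{\mu})_b$, and for general $f\in\bigcup_{r\in[s,\infty]}L^r(\R^d,\widehat{\mu})$ (in particular $r=\infty$, where $f\notin L^1(\R^d,\widehat{\mu})$ and the identity $\alpha u-\overline{L}u=f$ in $L^1$ does not literally apply) the estimate \eqref{resestko} must be transported by the approximation/contraction argument the paper carries out (first $f\in L^1(\widehat{\mu})_b$, then $r\in[s,\infty)$ by density and $L^r$-contractivity, then $r=\infty$ via $f_n=1_{B_n}f$ and Lebesgue's theorem); your sketch applies the PDE argument directly to general $f$ and omits this step.
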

\begin{proof}
Let $f \in L^1(\mathbb{R}^d, \widehat{\mu})_b$ and $\alpha>0$. Since $G_{\alpha} f =\overline{G}_{\alpha}f \in D(\overline{L})_b \subset D(\mathcal{E}^0) \subset H^{1,2}_{loc}(\mathbb{R}^d)$ by \cite[Theorem 3.5(i)]{LT24a}, we get
\begin{eqnarray}
&&\int_{\R^d} \big \langle \frac12 \rho A \nabla G_{\alpha}f, \nabla \varphi \big \rangle dx - \int_{\R^d} \langle \rho \psi \mathbf{B}, \nabla G_{\alpha} f \rangle \varphi\, dx  +\int_{\R^d} (\alpha \rho \psi G_{\alpha} f) \,\varphi dx  \nonumber \\
&& = \int_{\R^d} (\rho \psi f) \,\varphi dx, \quad \; \text{ for all } \varphi \in C_0^{\infty}(\R^d). \label{feppwikmee}
\end{eqnarray}
We have
\begin{align*}
-\rho \psi \bold{B} = -\frac{\rho}{2} \text{div} (A+C) - \rho \psi \bold{H}+\rho \psi \beta^{\rho, A, \psi} = -\frac{\rho}{2} \text{div}\,C - \rho \psi \bold{H} + \frac{1}{2} A \nabla \rho.
\end{align*}
If $\varphi \in C_0^{\infty}(\mathbb{R}^d)$ is arbitrary but fixed, there exists an open ball $B$ with  with $\text{supp}\,\varphi \subset B$.
Choose $(\rho_m)_{m \geq 1}$ and $(v_n)_{n \geq 1} \subset C^{\infty}(\overline{B})$ such that  $\lim_{m \rightarrow \infty} \rho_m=\rho$ in $H^{1,p}(B)$ and $\lim_{n \rightarrow \infty} v_n =G_{\alpha} f$ in $H^{1,2}(B)$. Then, using the anti-symmetry of $C^T$ and integration by parts, we obtain 
\begin{align*}
&\int_{\mathbb{R}^d} \big \langle -\frac{\rho}{2} \text{div}\, C, \nabla G_{\alpha} f \big \rangle \varphi dx = \lim_{m \rightarrow \infty} \lim_{n \rightarrow \infty} \int_{\mathbb{R}^d} \big \langle -\frac{\rho_m}{2} \text{div}\,C, \nabla v_n \big \rangle \varphi dx \\
&= \lim_{m \rightarrow \infty} \lim_{n \rightarrow \infty} \int_{\mathbb{R}^d} \Big (\frac12 \text{trace}(\rho_m \varphi C^T \nabla^2 v_n) +\frac{1}{2} \big \langle C^T \nabla (\rho_m \varphi), \nabla v_n \big \rangle \Big ) \,dx \\
&= \int_{\mathbb{R}^d} \frac{1}{2} \big \langle C^T \nabla (\rho \varphi), \nabla G_{\alpha} f   \big \rangle\,dx = \int_{\mathbb{R}^d}  \Big (\big \langle \frac{1}{2} \rho C \nabla G_{\alpha} f, \nabla \varphi  \big \rangle dx +\big \langle  \frac12 C^T\nabla \rho, \nabla G_{\alpha} f   \big \rangle \varphi\Big ) dx.
\end{align*}
Hence, for each $\varphi \in C_0^{\infty}(\mathbb{R}^d)$ it holds that
\begin{align*}
&-\int_{\mathbb{R}^d} \langle \rho \psi \mathbf{B}, \nabla G_{\alpha} f  \rangle \varphi dx =  
\int_{\mathbb{R}^d} \big \langle \frac{1}{2} \rho C \nabla G_{\alpha} f, \nabla \varphi  \big \rangle dx + \int_{\mathbb{R}^d} \langle  - \rho \psi \bold{H} +  \frac12 (A+C^T)\nabla \rho, \nabla G_{\alpha} f   \rangle \varphi dx.
\end{align*}
Thus, \eqref{feppwikmee} implies that
\begin{eqnarray} 
&&\int_{\R^d} \big \langle \frac12 \rho (A+C) \nabla G_{\alpha}f, \nabla \varphi \big \rangle dx + \int_{\R^d} \langle \widehat{\bold{F}}, \nabla G_{\alpha} f \rangle \varphi\, dx  +\int_{\R^d} (\alpha \rho \psi G_{\alpha} f) \,\varphi dx  \nonumber \\
&& = \int_{\R^d} (\rho \psi f) \,\varphi dx, \quad \; \text{ for all } \varphi \in C_0^{\infty}(\R^d), \label{feppwikmee2}
\end{eqnarray}
where $\widehat{\bold{F}} = - \rho \psi \bold{H} +  \frac12 (A+C^T)\nabla \rho$. Note that 
$\rho$ is locally bounded below and above on $\R^d$ and $\widehat{\mathbf{F}} \in L_{loc}^p(\R^d, \R^d)$,  $\alpha \rho \psi \in L_{loc}^q(\R^d)$. Let $U$, $V$ be open balls in $\R^d$ satisfying $\overline{U} \subset V$. Since we are integrating over bounded sets and $p>d$, we assume that $p$ is so close to $d$ that $\frac{pd}{p+d}\in (\frac d2,  (\frac1q+\frac1s)^{-1})$. Then, by Theorem \ref{mainregulthm}(ii), there exists a H\"{o}lder continuous $dx$-version $R_{\alpha} f$ of $G_{\alpha} f$ on $\R^d$ and constants $\gamma \in (0,1)$, $c_1>0$, which depend on $U$ and $V$, but not on $f$,  such that 
\begin{eqnarray*}
\|R_{\alpha} f \|_{C^{0,\gamma}(\overline{U})} &\leq& c_1 \big( \|G_{\alpha} f \|_{L^1(V)}+ \| \rho \psi  f\|_{L^{  (\frac{1}{q}+\frac{1}{s})^{-1}}(V)}    \big).
\end{eqnarray*}
Using in particular H\"{o}lder's inequality, the right hand side of the latter inequality can be estimated by
\begin{eqnarray*}
&& c_1 \Big( \Big\| \frac{1}{\rho \psi} \Big\|_{L^{\infty}(V)} \| \rho \psi  G_{\alpha} f\|_{L^1(V)} + \|\rho \psi\|_{L^q(V)} \|f\|_{L^s(V)} \Big)   \\
&\leq&    c_1 \Big( \Big\| \frac{1}{\rho \psi} \Big\|_{L^{\infty}(V)} \| G_{\alpha}f\|_{L^1(V, \widehat{\mu})} + \|\rho \psi\|_{L^q(V)} \Big \|\frac{1}{\rho \psi}  \Big\|^{1/s}_{L^{\infty}(V)} \|f\|_{L^s(V,\widehat{\mu})} \Big)  \\
  &\leq& c_2 \big( \|G_{\alpha} f \|_{L^1(V, \widehat{\mu})}+\| f\|_{L^{s}(V, \widehat{\mu})}    \big),  
\end{eqnarray*}
where $c_2:=c_1\big(\frac{1}{\inf_{V}\rho\psi} \vee  \frac{\|\rho \psi \|_{L^q(V)}}{(\inf_{V}\rho\psi)^{1/s}} \big)$. Thus \eqref{resestko} holds for $f\in L^1(\R^d, \widehat{\mu})_b$. Using the H\"{o}lder inequality and the contraction property, we obtain \eqref{resestko} for $f \in \cup_{r \in [s, \infty)}L^r(\R^d, \widehat{\mu})$.
If $f \in L^{\infty}(\mathbb{R}^d, \widehat{\mu})$, then $1_{B_n} f \in L^1(\mathbb{R}^d, \widehat{\mu})_b$ for all $n \geq 1$ and
$$
\|f-f_n\|_{L^s(V, \widehat{\mu})} + \| G_{\alpha}(f-f_n) \|_{L^1(V,\widehat{\mu})} \to 0 \text{ as } \;n\to \infty
$$ 
by Lebesgue's Theorem and consequently \eqref{resestko} holds.
\end{proof}\\ \\
Assume {\bf (C1)}. Let $U,V,c_2$, and $\gamma$ be as in Theorem \ref{rescondojc} and $g \in  L^r(\R^d,\widehat{\mu})$ for some $r \in [s, \infty]$. Let $\alpha>0$. Then, using Theorem \ref{rescondojc} and the $L^r(\mathbb{R}^d, \widehat{\mu})$-contraction property of $(G_{\alpha})_{\alpha>0}$
\begin{align}
\| R_{\alpha} g\|_{C^{0, \gamma}(\overline{U})} &\leq c_2 \left ( \| G_{\alpha}f \|_{L^1(V,\widehat{\mu})}+ \| f \|_{L^s(V, \widehat{\mu})}\right )  \nonumber \\
& \leq c_2 \left(   \widehat{\mu}(V)^{1-\frac{1}{r}}  \|G_{\alpha}f \|_{L^r(V, \widehat{\mu})}+ \widehat{\mu}(V)^{\frac{1}{s}- \frac{1}{r}}  \|f\|_{L^r(V, \widehat{\mu})}  \right)  \nonumber \\
&\leq c_3  \|f\|_{L^r(\mathbb{R}^d, \widehat{\mu})}, \label{resolcontra}
\end{align}
where  $c_3=c_2\left( \frac{1}{\alpha}  \widehat{\mu}(V)^{1-\frac{1}{r}}  + \widehat{\mu}(V)^{\frac{1}{s}-\frac{1}{r}}   \right)$.
Let $r \in [s, \infty)$ and $f \in  D(L_r)$. Then $f = G_1(1-L_r) f$, hence by Theorem \ref{rescondojc}, $f$ has a locally H\"{o}lder continuous $dx$-version on $\R^d$ and
\begin{eqnarray*}
\|f\|_{C^{0,\gamma}(\overline{U})}
&\leq& c_3 \|f\|_{D(L_r)}.
\end{eqnarray*}
Let $(T_{t})_{t>0}$ be as described in Theorem \ref{helholmop}.
Then, $T_t f \in D(L_r)$ and $T_t f$  has hence a continuous $dx$-version, say $P_{t}f$, with
\begin{equation} \label{ptgraphnorm}
\|P_t f\|_{C^{0, \gamma}(\overline{U})} \leq c_3\|P_t f \|_{D(L_r)}.
\end{equation}
Note that $c_3$ is independent of $t\geq 0$ as well as of $f$. The following Lemma, which can be shown exactly as \cite[Lemma 2.30]{LST22} will be used for the proofs of Theorems \ref{1-3reg3} and \ref{fvodkoko} below and also in Lemma \ref{pjpjwoeij}.
\begin{lemma} \label{contiiokc}
Assume {\bf (C1)}. For any $f\in \bigcup_{r\in [s,\infty)} D(L_r)$, the map
$$
(x,t)\mapsto P_t f(x)
$$
is continuous on $ \R^d\times [0,\infty)$, where $P_0 f(x):=f(x)$ for each $x \in \mathbb{R}^d$.
\end{lemma}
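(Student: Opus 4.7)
The plan is to obtain joint continuity of $(x,t)\mapsto P_t f(x)$ by combining the uniform Hölder-norm bound \eqref{ptgraphnorm} (with $c_3$ independent of the argument and of $t$) with the strong continuity of $(T_t)_{t\ge 0}$ in the graph norm on $D(L_r)$. The guiding observation is that \eqref{ptgraphnorm} is really a linear estimate: if $g\in D(L_r)$ has a continuous $dx$-version $\widetilde g$, then
\[
\|\widetilde g\|_{C^{0,\gamma}(\overline U)} \le c_3 \|g\|_{D(L_r)}
\]
with $c_3$ depending only on $U$, $V$, $A$, $C$, $\psi$, $\bold H$. This inequality can therefore be applied to increments of $T_t f$.

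First I would fix $f\in D(L_r)$ for some $r\in[s,\infty)$. Since $(T_t)_{t\ge 0}$ is a $C_0$-semigroup on $L^r(\R^d,\widehat\mu)$ with generator $L_r$, the inclusion $f\in D(L_r)$ gives $T_t f\in D(L_r)$ together with the standard commutation $L_r T_t f = T_t L_r f$, and both $t\mapsto T_t f$ and $t\mapsto T_t L_r f$ are continuous from $[0,\infty)$ into $L^r(\R^d,\widehat\mu)$. Hence $t\mapsto T_t f$ is continuous from $[0,\infty)$ into $D(L_r)$ equipped with the graph norm. Applying the linear version of \eqref{ptgraphnorm} to the element $T_t f-T_{t_0}f\in D(L_r)$, whose continuous $dx$-version is $P_tf-P_{t_0}f$, yields, for every pair of open balls $U,V$ with $\overline U\subset V$,
\[
\|P_t f - P_{t_0} f\|_{C^{0,\gamma}(\overline U)} \;\le\; c_3\,\|T_t f - T_{t_0} f\|_{D(L_r)} \;\longrightarrow\; 0 \quad \text{as } t\to t_0.
\]

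Consequently $P_t f\to P_{t_0}f$ uniformly on $\overline U$; together with the continuity of $P_{t_0}f$ in $x$, this delivers joint continuity on $\overline U\times [0,\infty)$, and since $U$ was arbitrary on the whole of $\R^d\times [0,\infty)$. The endpoint $t_0=0$ is handled by the stipulation $P_0 f:=f$, once we identify $f$ with its locally Hölder continuous $dx$-version produced by the estimate $\|f\|_{C^{0,\gamma}(\overline U)}\le c_3\|f\|_{D(L_r)}$ recorded just before \eqref{ptgraphnorm}. No step poses a real obstacle; the argument is simply the packaging of an $L^r$-to-Hölder a priori estimate against the $C_0$-property of the semigroup, and the only point requiring attention is the $t$-independence of $c_3$, which is already built into \eqref{ptgraphnorm}.
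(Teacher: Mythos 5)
Your argument is correct and is essentially the proof the paper has in mind: the paper defers to \cite[Lemma 2.30]{LST22}, having already prepared exactly the ingredients you use, namely the $t$- and $f$-independent estimate \eqref{ptgraphnorm} (valid for every element of $D(L_r)$ via $g=G_1(1-L_r)g$) and the graph-norm continuity of $t\mapsto T_tf$ coming from the $C_0$-property on $L^r(\R^d,\widehat\mu)$ and $L_rT_tf=T_tL_rf$. Applying the estimate to the increments $T_tf-T_{t_0}f$ and noting that continuous $dx$-versions are unique gives locally uniform convergence and hence joint continuity, including at $t_0=0$ with $f$ identified with its H\"older continuous version, just as you wrote.
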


\begin{theorem}\label{1-3reg3}
Assume {\bf (C1)}. Fix $\widetilde{p} \in (d, p]$ with $\frac{\widetilde{p}}{2} \leq q$ and let $ f \in \bigcup_{\nu \in [\frac{2\widetilde{p}}{\widetilde{p}-2}, \infty]}L^\nu(\R^d, \widehat{\mu})$, $t>0$. Then $T_t f$ has a continuous $dx$-version $P_t f$ on $\R^d$ and $P_{\cdot} f(\cdot) $ is continuous on $\R^d \times (0, \infty)$.  Furthermore, for any bounded open sets $U$, $V$ in $\R^d$ with $\overline{U} \subset V$ and $0<\tau_3<\tau_1<\tau_2<\tau_4$, (hence $[\tau_1, \tau_2] \subset (\tau_3, \tau_4)$), we have the following estimate for all $f \in \cup_{\nu \in[\frac{2\widetilde{p}}{\widetilde{p}-2},\infty]} L^\nu(\R^d, \widehat{\mu})$
\begin{equation*} \label{thm main est}
\|P_{\cdot} f(\cdot)\|_{C(\overline{U} \times [\tau_1, \tau_2])} \leq  C_1 \| P_{\cdot} f(\cdot) \|_{L^{\frac{2\widetilde{p}}{\widetilde{p}-2},2}( V \times (\tau_3, \tau_4)) },
\end{equation*}
where $C_1$ is a constant that depends on $\overline{U} \times [\tau_1, \tau_2], V \times (\tau_3, \tau_4)$, but is independent of $f$. 
\end{theorem}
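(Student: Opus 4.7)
The plan is to interpret $u(x,t) := P_t f(x)$ as a weak solution of the parabolic equation $\partial_t u = Lu$ on $\R^d \times (0,\infty)$ and to apply a parabolic local boundedness estimate that bounds $\|u\|_{L^\infty}$ on a cylinder by a mixed spacetime $L^{p,q}$ norm on a larger cylinder. Once this is in hand for $f$ in a convenient dense subclass, I would extend to the full $L^\nu$ range by the same truncation/contraction argument used at the end of the proof of Theorem \ref{rescondojc}.

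First, take $f \in D(L_r)$ for some fixed $r \in [s,\infty)$. By Lemma \ref{contiiokc}, $(x,t)\mapsto P_t f(x)$ is already continuous on $\R^d\times[0,\infty)$, so only the estimate needs to be proved for such $f$. Using the semigroup property and the integration-by-parts rewriting performed in the proof of Theorem \ref{rescondojc}, testing against $\varphi\in C_0^\infty(\R^d\times(0,\infty))$ yields
\[
\int_0^\infty\!\!\int_{\R^d}\!\rho\psi\, u\,\partial_t\varphi\,dx\,dt = \int_0^\infty\!\!\int_{\R^d}\!\Bigl\langle\tfrac12\rho(A+C)\nabla u,\nabla\varphi\Bigr\rangle\,dx\,dt + \int_0^\infty\!\!\int_{\R^d}\!\langle\widehat{\mathbf{F}},\nabla u\rangle\varphi\,dx\,dt,
\]
with $\widehat{\mathbf{F}}=-\rho\psi\mathbf{H}+\tfrac12(A+C^T)\nabla\rho\in L^p_{loc}(\R^d,\R^d)$, weight $\rho\psi\in L^q_{loc}(\R^d)$, and principal part $A+C \in VMO_{loc}$. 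This places $u$ squarely in the framework of the parabolic regularity theory I plan to develop in Section \ref{someanlyticresults} as the parabolic counterpart of Theorem \ref{mainregulthm}, building on \cite{DK11,KK19}.

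Next, I would invoke from that analytic toolbox a parabolic local boundedness estimate of the form
\[
\|u\|_{C(\overline{U}\times[\tau_1,\tau_2])}\;\leq\;C\,\|u\|_{L^{\frac{2\widetilde{p}}{\widetilde{p}-2},2}(V\times(\tau_3,\tau_4))}
\]
valid for weak solutions of our divergence-form equation with the coefficient classes above. The spatial exponent $\frac{2\widetilde{p}}{\widetilde{p}-2}$ is precisely the one dual to $\widetilde{p}/2$ in the natural parabolic energy scaling, and the hypotheses $\widetilde{p}\in(d,p]$ and $\widetilde{p}/2\leq q$ are exactly those required to absorb the lower-order terms against the drift and weight integrability. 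I expect this to be the main obstacle of the proof: one either invokes a parabolic Krylov/Aronson--Moser-type $L^p$--$L^\infty$ bound that is already stated in the analytic section for operators of this form, or one proves it there directly via coefficient freezing combined with the VMO$_{loc}$ elliptic estimates of Theorem \ref{mainregulthm} and Poincaré-type arguments on parabolic cylinders.

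Finally, to pass from $D(L_r)$ to arbitrary $f\in L^\nu(\R^d,\widehat{\mu})$ with $\nu\in[\tfrac{2\widetilde{p}}{\widetilde{p}-2},\infty]$, I would approximate by $f_n := 1_{B_n}\cdot\bigl((-n)\vee f\wedge n\bigr)\in L^1(\R^d,\widehat{\mu})_b$; each $T_\varepsilon f_n$ sits in $D(L_r)$ for a suitable $r$, so the estimate applies to $P_\cdot(T_\varepsilon f_n)$. Using the $L^\nu$-contractivity of $(T_t)_{t>0}$ together with Hölder's inequality on the bounded cylinder $V\times(\tau_3,\tau_4)$, the right-hand side dominates a multiple of $\|f\|_{L^\nu(\R^d,\widehat{\mu})}$ times a finite measure factor, so the differences form a Cauchy sequence in $C(\overline{U}\times[\tau_1,\tau_2])$. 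The limit defines the continuous $dx$-version $P_t f$ and, by exhausting $\R^d\times(0,\infty)$ with such cylinders, yields the joint continuity on $\R^d\times(0,\infty)$, while the displayed estimate passes to the limit by dominated convergence.
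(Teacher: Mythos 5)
Your overall strategy is the one the paper follows: identify $u=P_\cdot f$ as a weak solution of the divergence-form parabolic equation with data $\widehat{\mathbf{F}}=-\rho\psi\mathbf{H}+\tfrac12(A+C^T)\nabla\rho$, apply a local $L^{\frac{2\widetilde{p}}{\widetilde{p}-2},2}$-to-$L^\infty$ bound from the analytic section, and extend by density and truncation. The parabolic estimate you hope for is exactly Theorem \ref{weoifo9iwjeof}, which is proved by a covering argument from the Moser-type bound \cite[Theorem 1]{LT19de}; no VMO hypothesis or coefficient freezing is needed there (your alternative route via VMO elliptic estimates is not how it is, or needs to be, obtained). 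Also note that passing from product test functions $\varphi_1\varphi_2$ (for which the time derivative of the semigroup can be handled) to general $\varphi\in C_0^\infty(\R^d\times(0,T))$ requires the density Lemma \ref{stoneweier}; you gloss over this, but it is a minor point.

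Two steps as written would fail and need repair. First, your dense subclass $f\in D(L_r)$, $r\in[s,\infty)$, is not enough: Theorem \ref{weoifo9iwjeof} requires $u\in H^{1,2}(V\times(0,T))\cap L^\infty(V\times(0,T))$, and the space-time $H^{1,2}$ regularity together with boundedness of $u$ is only available (via Lemma \ref{pjpjwoeij} and the sub-Markov property) for $f\in D(\overline{L})_b\cap D(L_s)\cap D(L_2)$, which is the class the paper starts from. Second, your approximation step uses that $T_\varepsilon f_n\in D(L_r)$; this is unjustified, since the sub-Markovian $C_0$-semigroups here are not known to be analytic, so $T_\varepsilon$ does not map $L^r$ into the generator domain. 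The correct device is the resolvent: $nG_n f_n\in D(\overline{L})_b\cap D(L_s)\cap D(L_2)$ and $nG_nf_n\to f_n$ in $L^\nu(\R^d,\widehat\mu)$, after which your Hölder/contraction argument and the final truncation $f_n=1_{B_n}f$ for $f\in L^\infty(\R^d,\widehat\mu)$ (with Lebesgue's theorem to get a Cauchy sequence in $C(\overline{U}\times[\tau_1,\tau_2])$) go through as in the paper.
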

\begin{proof}
First assume $f \in D(\overline{L})_b \cap D(L_s) \cap D(L_2)$. By \cite[Theorem 3.5]{LT24a}, we obtain that $T_t f \in D(\overline{L})_b \subset D(\mathcal{E}^0) \subset H^{1,2}_{loc}(\mathbb{R}^d)$. Moreover, using Lemma \ref{contiiokc} and the sub-Markovian property of $(P_t)_{t>0}$ we see that the function $u$ defined by $u(x,t) := P_t f(x)$, $(x,t) \in \mathbb{R}^d \times [0, \infty)$ is continuous and bounded on $\mathbb{R}^d \times [0, \infty)$. Note that for any bounded open set $O\subset \R^d$ and $T>0$,  it holds $u \in H^{1,2}(O\times (0,T))$ by Lemma \ref{pjpjwoeij}.  Now, for $\varphi_1 \in C_{0}^{\infty}(\R^d)$, $\varphi_{2} \in C_{0}^{\infty}\left((0,T)\right)$, we get that
\begin{align}
&\iint_{\R^d \times (0,T)}\big ( \langle \frac{1}{2} \rho A  \nabla u,  \nabla (\varphi_1 \varphi_2)  \rangle-  \langle  \rho \psi \bold{B}, \nabla u   \rangle  \varphi_1 \varphi_2 \big )\;dx dt  \nonumber \\
=&\int_{0}^{T} \varphi_2\big( \int_{\R^d}  \langle \frac{1}{2} \rho A  \nabla \left( T_t f \right),  \nabla \varphi_1   \rangle-  \big \langle  \rho \psi \bold{B}, \nabla \left(T_t f \right) \big \rangle  \varphi_1 \;dx \big) dt \nonumber \\
=&\int_{0}^{T} \varphi_2 \Big( \mathcal{E}^{0}(T_t f, \varphi_1)- \int_{\R^d} \langle \bold{B}, \nabla T_t f \rangle \varphi_1\, d \widehat{\mu} \Big) dt \nonumber = \int_{0}^{T} - \varphi_2  \big(  \int_{\R^d}  \varphi_1  \overline{L}\, \overline{T}_t f \, d\mu  \big) dt \nonumber \\
=&  \int_{0}^{T}  -\varphi_2 \Big( \frac{d}{dt} \int_{\R^d} \varphi_1 T_t f \;\rho \psi dx  \Big) dt \nonumber =  \int_{0}^{T}  ( \frac{d}{dt} \varphi_2 ) \big (  \int_{\R^d} \varphi_1 T_t f \, \rho\psi dx \big ) dt  \nonumber \\
=&\iint_{\R^d \times (0,T)} u \;\partial_t (  \varphi_1 \varphi_2  ) \rho \psi dx dt. \label{maindivdpl}
\end{align}
By Lemma \ref{stoneweier}, \eqref{maindivdpl} extends to
\begin{eqnarray}
&&\iint_{\R^d \times (0,T)}  \big (\langle \frac{1}{2} \rho A  \nabla u,  \nabla \varphi\rangle-  \langle  \rho \psi \bold{B}, \nabla \left(T_t f \right) \rangle  \varphi \big ) \;dx dt \nonumber  \\
&&\qquad  =\iint_{\R^d \times (0,T)} u \;\partial_t  \varphi  \cdot \rho \psi dx dt \quad \text{ for all } \varphi \in C_0^{\infty}(\R^d \times (0,T)). \label{maindivkjbie-3}
\end{eqnarray}
Analogously to the calculation that led from \eqref{feppwikmee} to \eqref{feppwikmee2}, we obtain 
\begin{align*}
&\iint_{\R^d \times (0,T)} \big ( \big \langle \frac{1}{2} \rho (A+C)  \nabla u,  \nabla \varphi \big \rangle + \langle \widehat{\mathbf{F}}, \nabla u \rangle  \varphi  \big )\;dx dt \nonumber  \\
&\qquad  =\iint_{\R^d \times (0,T)} u \;\partial_t  \varphi  \cdot \rho \psi dx dt \quad \text{ for all } \varphi \in C_0^{\infty}(\R^d \times (0,T)), \label{maindivkjbie-4}
\end{align*}
where $\widehat{\bold{F}} = - \rho \psi \bold{H} +  \frac12 (A+C^T)\nabla \rho\in L^p_{loc}(\mathbb{R}^d, \mathbb{R}^d)$. Let $U,V$ and $\tau_1,\ldots,\tau_4$ be as in the statement of the theorem. Let $\widetilde{q}$ be any number in $[2\vee \frac{\widetilde{p}}{2}, \infty)$ if $q=\infty$, and let $\widetilde{q}:=q$ if $q<\infty$. Then obviously $\psi \in L_{loc}^{\widetilde{q}}(\mathbb{R}^d)$, because $\psi \in L_{loc}^{q}(\mathbb{R}^d)$.  By Lemma \ref{contiiokc} $P_{\cdot }f\in C(\overline{U} \times [\tau_1, \tau_2])$ and then by Theorem \ref{weoifo9iwjeof},  we get for $\nu\in [\frac{2\widetilde{p}}{\widetilde{p}-2},\infty)$
\begin{eqnarray}
\|P_{\cdot }f\|_{C(\overline{U} \times [\tau_1, \tau_2])}\ = \ \|P_{\cdot }f\|_{L^{\infty}(U \times (\tau_1, \tau_2))} &\leq& C_1 \|P_{\cdot} f \|_{L^{\frac{2\widetilde{p}}{\widetilde{p}-2},2}(V \times (\tau_3, \tau_4))}   \label{firstjkcme} \\
&=& C_1 \Big( \int_{\tau_3}^{\tau_4} \big(  \int_{V} |T_t f |^{\frac{2\widetilde{p}}{\widetilde{p}-2}}\, dx \big)^{\frac{\widetilde{p}-2}{\widetilde{p}}}dt   \Big)^{1/2} \nonumber \\
&\leq&  C_1 \big( \frac{1}{ \inf_{V} \rho \psi} \big)^{\frac{\widetilde{p}-2}{2\widetilde{p}}}  \Big( \int_{\tau_3}^{\tau_4} \big(  \int_{V} |T_t f |^{\frac{2\widetilde{p}}{\widetilde{p}-2}} d \widehat{\mu} \big)^{\frac{\widetilde{p}-2}{\widetilde{p}}}dt   \Big)^{1/2} \nonumber   \\
&\leq&  C_1 \underbrace{\big( \frac{1}{ \inf_{V} \rho \psi} \big)^{\frac{\widetilde{p}-2}{2\widetilde{p}}} \widehat{\mu}(V)^{\frac{1}{2}-\frac{1}{\widetilde{p}}-\frac{1}{\nu}} }_{=:C_2} \,\big(\int_{\tau_3}^{\tau_4} \|T_t f \|^2_{L^{\nu}(V, \widehat{\mu})} dt \big)^{1/2} \nonumber \\
&\leq& C_1 C_2 (\tau_4-\tau_3)^{1/2} \|f\|_{L^{\nu}(\R^d, \widehat{\mu})}. \label{conestlmet}
\end{eqnarray}
Now assume $f \in L^1(\R^d, \widehat{\mu})_b$. Then $nG_n f \in D(\overline{L})_b \cap D(L_s) \cap D(L_2)$ for all $ n \in \N$ and $\lim_{n \rightarrow \infty} nG_n f = f$ \; in $L^{\nu}(\R^d, \widehat{\mu})$. Using the completeness of $C(\overline{U} \times [\tau_1, \tau_2])$ and $L^{\frac{2\widetilde{p}}{\widetilde{p}-2},2}(V \times (\tau_3, \tau_4))$, \eqref{firstjkcme} and \eqref{conestlmet} extend to all $f \in L^1(\R^d, \widehat{\mu})_b$. Subsequently, using the same argument, \eqref{firstjkcme} and \eqref{conestlmet} extend to all $f \in L^\nu(\R^d, \widehat{\mu})$, $\nu \in [\frac{2\widetilde{p}}{\widetilde{p}-2}, \infty)$. If  $f \in L^{\infty}(\R^d, \widehat{\mu})$ and let $f_n:= 1_{B_n}\cdot  f$ for $n \geq 1$. Then $\lim_{n \rightarrow \infty} f_n = f$ \, $\widehat{\mu}$-a.e. on $\R^d$ and
\begin{equation}\label{ae2-3}
T_t f= \lim_{n \rightarrow \infty}T_t f_n=\lim_{n \rightarrow \infty}P_t f_n, \; \widehat{\mu} \text{-a.e.\; on } \R^d.  
\end{equation}
 Thus using the sub-Markovian property and then Lebesgue's Theorem in \eqref{firstjkcme}, $(P_{\cdot} f_n (\cdot ))_{n\ge 1}$ is a Cauchy sequence in 
$C(\overline{U} \times [\tau_1, \tau_2])$.
Hence we can again define
$$
P_{\cdot} f:=\lim_{n\to \infty}P_{\cdot} f_n(\cdot) \ \text{ in }\  C(\overline{U} \times [\tau_1, \tau_2]).
$$
For each $t>0$, $P_t f_n$ converges uniformly to $P_t f$ in $U$, hence in view of \eqref{ae2-3}, $T_t f$ has continuous $dx$-version $P_t f$ on $\overline{U}$, $P_{\cdot}f \in  C(\overline{U} \times [\tau_1, \tau_2])$ and \eqref{firstjkcme}   extends to all $f \in L^{\infty}(\R^d, \widehat{\mu})$. Finally, since $U$ and $[\tau_1, \tau_2]$ are arbitrary, we obtain that for any $f\in \cup_{\nu\in[\frac{2\widetilde{p}}{\widetilde{p}-2},\infty]} L^{\nu}(\R^d, \widehat{\mu})$, $P_{\cdot} f(\cdot)$ is continuous on $\R^d \times (0, \infty)$ and for each $t>0$, $P_t f = T_t f$ \,$\widehat{\mu}$-a.e. on $\R^d$.  \vspace{-1.11em}
\end{proof}

\begin{remark}
Assume {\bf (C1)}. Let $(R_{\alpha})_{\alpha>0}$ and $(P_t)_{t>0}$ be as in Theorem \ref{rescondojc}
and
Theorem \ref{1-3reg3}, respectively.
\begin{itemize}
\item[(i)] 
By Theorem \ref{rescondojc}, we get a resolvent kernel $\alpha R_{\alpha}(x, dy)$ for any $\alpha>0$, $x\in \mathbb{R}^d$, defined by
\begin{eqnarray*}\label{resker}
\alpha R_{\alpha}(x,A):=  \alpha R_{\alpha}1_{A}(x), \;\; A\in  \mathcal{B}(\R^d),
\end{eqnarray*}
which is a sub-probability measure on $(\R^d, \mathcal{B}(\R^d))$ and is absolutely continuous with respect to $dx$.
\item[(ii)]
By Theorem \ref{1-3reg3}, we get a semigroup kernel $P_{t}(x,\cdot)$ for any $x\in \R^d$, $t>0$, defined by
\begin{eqnarray*}\label{heatker-3}
P_{t}(x,A):=P_t 1_A(x) , \;\; A\in  \mathcal{B}(\R^d),
\end{eqnarray*}
which is a sub-probability measure on $(\R^d, \mathcal{B}(\R^d))$ and is absolutely continuous with respect to $dx$.
\end{itemize}
\end{remark}
The proof of the following proposition is analogous to the ones of \cite[Propositions 3.1, 3.2 and Theorem 3.3]{LST22}.
\begin{proposition}\label{regular2-3}
Assume {\bf (C1)} and let $t, \alpha>0$. Then it holds:
\begin{itemize}
\item[(i)] $G_{\alpha}g$ has a locally H\"older continuous $dx$-version
\begin{eqnarray}\label{tgyrr-3}
R_{\alpha}g=\int_{\R^d}g(y) R_{\alpha}(\cdot,dy),  \; \; \;  \forall g\in \bigcup_{r\in [s,\infty]} L^r(\R^d,\widehat{\mu}). \qquad 
\end{eqnarray}
In particular,  \eqref{tgyrr-3} extends by linearity  to all $g\in L^s(\R^d, \widehat{\mu})+L^\infty(\R^d, \widehat{\mu})$, i.e.  $(R_{\alpha})_{\alpha>0}$ is $L^{[s,\infty]}(\R^d,\widehat{\mu})$-strong Feller.
\item[(ii)] Let  $\widetilde{p} \in (d, p]$ with $\frac{\widetilde{p}}{2} \leq q$.
$T_t f$ has a continuous $dx$-version   
\begin{eqnarray}\label{tgy2-3}
P_t f= \int_{\R^d} f(y) P_t(\cdot,dy),\; \;\;  \forall f\in \bigcup_{\nu \in [\frac{2\widetilde{p}}{\widetilde{p}-2},\infty]}L^\nu(\R^d, \widehat{\mu}).  \qquad
\end{eqnarray}
In particular,  \eqref{tgy2-3} extends by linearity  to all $f\in L^{\frac{2\widetilde{p}}{\widetilde{p}-2}}(\R^d, \widehat{\mu})+L^\infty(\R^d, \widehat{\mu})$, i.e.   $(P_{t})_{t>0}$ is $L^{[\frac{2\widetilde{p}}{\widetilde{p}-2},\infty]}(\R^d,\widehat{\mu})$-strong Feller.
\end{itemize}
Finally, for any $\alpha>0, x\in \R^d$, $g\in L^s(\R^d, \widehat{\mu})+L^\infty(\R^d, \widehat{\mu})$
$$
R_{\alpha}g(x)=\int_0^{\infty} e^{-\alpha t} P_t g(x)\,dt.
$$
\end{proposition}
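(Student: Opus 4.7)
My plan is to obtain the kernel representations by combining the continuity estimates of Theorems \ref{rescondojc} and \ref{1-3reg3} with a standard density/linearity argument, and then to derive the Laplace transform identity from the general semigroup--resolvent relation on $L^1(\R^d,\widehat{\mu})$.

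For part (i), I would first note that $1_A\in L^\infty(\R^d,\widehat{\mu})\subset \cup_{r\in[s,\infty]}L^r(\R^d,\widehat{\mu})$, so by Theorem \ref{rescondojc} the function $R_\alpha 1_A$ is a well-defined locally H\"older continuous $dx$-version of $G_\alpha 1_A$, and the sub-Markovianity gives $0\le R_\alpha 1_A(x)\le 1/\alpha$ for every $x$. For fixed $x\in\R^d$, I would then verify that $A\mapsto R_\alpha(x,A):=R_\alpha 1_A(x)$ is a sub-probability measure on $\mathcal{B}(\R^d)$: finite additivity is immediate from the linearity of $R_\alpha$ on disjoint indicators, and countable additivity reduces to upward monotone convergence on a bounded neighborhood $V\ni x$, using that $\widehat{\mu}(V)<\infty$ together with \eqref{resestko} applied to $1_{A_n}-1_A\to 0$ in $L^s(V,\widehat{\mu})$, which forces pointwise convergence at $x$. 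Absolute continuity w.r.t.\ $dx$ was already recorded in the preceding remark. With the kernel in hand, the identity \eqref{tgyrr-3} is trivial on indicators, extends by linearity to simple functions, and then to all $g\in L^r(\R^d,\widehat{\mu})$, $r\in[s,\infty]$, by approximating $g$ with simple functions $g_n$: the integral against $R_\alpha(x,\cdot)$ converges by dominated convergence while the left-hand side converges pointwise thanks to \eqref{resestko}. The extension by linearity to $L^s+L^\infty$ is precisely the $L^{[s,\infty]}$-strong Feller assertion. Part (ii) is then completely analogous, using Theorem \ref{1-3reg3} in place of Theorem \ref{rescondojc} and the threshold $\frac{2\widetilde p}{\widetilde p-2}$ dictated by its mixed-norm estimate; the same monotone-class/approximation scheme delivers the sub-probability kernel $P_t(x,\cdot)$ and the representation \eqref{tgy2-3}.

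For the Laplace formula, the starting point is the standard identity
$$
G_\alpha f \;=\; \int_0^\infty e^{-\alpha t} T_t f\,dt\qquad\text{in } L^1(\R^d,\widehat{\mu}),
$$
valid for all $f\in L^1(\R^d,\widehat{\mu})$ by general $C_0$-semigroup theory applied to $(\overline L,D(\overline L))$. Specializing to $f\in L^1(\R^d,\widehat{\mu})_b$, I would pick $x\in\R^d$ and a bounded neighborhood $V\ni x$, and use the locally uniform estimate \eqref{resestko} to justify pointwise evaluation at $x$ after swapping the $dt$-integral with the $dx$-representative: since $t\mapsto P_t f(x)$ is bounded and continuous in $t$ for $f\in L^1\cap L^\infty$ by Lemma \ref{contiiokc} and the strong Feller property just proved, Fubini's theorem on $\widehat\mu\otimes dt$ together with the locally uniform bound gives $R_\alpha f(x)=\int_0^\infty e^{-\alpha t}P_t f(x)\,dt$ for every $x$. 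Splitting a general $g\in L^s+L^\infty$ as $g=g_1+g_2$ and truncating by $1_{B_n}g_i\in L^1\cap L^\infty$, dominated convergence in the pointwise Krylov-type estimates of both $(R_\alpha)$ and $(P_t)$ passes the identity to the limit, yielding the claim.

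The only real obstacle is the bookkeeping for the passages to the limit: the constants $c_2,c_3$ in \eqref{resestko} and $C_1,C_2$ in Theorem \ref{1-3reg3} depend on the chosen neighborhoods $U\Subset V$, on $\inf_V\rho\psi$ and on $\widehat\mu(V)$, so one must fix the neighborhoods independently of the approximating sequence before invoking continuity of $R_\alpha$ or $P_t$ in the appropriate norm. Once this is done, all subsequent steps--verifying countable additivity of the kernels, extending the integral representations, and exchanging the $dt$- and $dx$-integrations in the Laplace identity--are routine.
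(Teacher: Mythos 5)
Your overall architecture --- build the kernels from the continuous versions supplied by Theorems \ref{rescondojc} and \ref{1-3reg3}, prove \eqref{tgyrr-3} and \eqref{tgy2-3} on indicators and extend by linearity and approximation, then get the Laplace formula from the $L^1(\R^d,\widehat\mu)$-identity $G_\alpha f=\int_0^\infty e^{-\alpha t}T_tf\,dt$ plus continuity of both sides --- is exactly the route of \cite[Propositions 3.1, 3.2 and Theorem 3.3]{LST22}, to which the paper defers. Two of your justifications are too quick, however. In the countable-additivity step, convergence of $1_{A_n}-1_A$ in $L^s(V,\widehat\mu)$ does not control the term $\|G_\alpha(1_{A_n}-1_A)\|_{L^1(V,\widehat\mu)}$ on the right-hand side of \eqref{resestko}; for sets of finite $\widehat\mu$-measure you should instead use the global bound \eqref{resolcontra} with $r=s$, and for a disjoint union of infinite measure you need the truncation/Lebesgue argument from the $L^\infty$-part of the proof of Theorem \ref{rescondojc} (or simply quote the remark preceding the proposition, where the kernel property is already recorded). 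Similarly, the dominated-convergence extension of \eqref{tgyrr-3} from simple functions to unbounded $g\in L^r(\R^d,\widehat\mu)$, $r<\infty$, presupposes $\int|g|\,dR_\alpha(x,\cdot)<\infty$, which is part of what is being proved; treat $|g|$ first by monotone convergence along $1_{B_n}(|g|\wedge n)$, which yields integrability and the identity simultaneously, and then split into positive and negative parts.

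The genuine gap is in the final extension of the Laplace identity. You truncate $g=g_1+g_2\in L^s(\R^d,\widehat\mu)+L^\infty(\R^d,\widehat\mu)$ by $1_{B_n}g_i$ (note that $1_{B_n}g_1$ need not be bounded, so you must also cut in magnitude) and then invoke ``dominated convergence in the pointwise Krylov-type estimates of both $(R_\alpha)$ and $(P_t)$''. On the $(P_t)$-side no such estimate exists on $L^s$: the strong Feller statements for $(P_t)$ require $\nu\ge\frac{2\widetilde{p}}{\widetilde{p}-2}$, and under {\bf (C1)} one may have $s<\frac{2\widetilde{p}}{\widetilde{p}-2}$ (for instance $d=3$, $p$ close to $3$, $q=\infty$, $s$ close to $\frac32$), so $P_tg_1$ is not even defined by part (ii) and the limiting step has nothing to lean on; moreover, dominated convergence of the $dt$-integral would require a $t$-integrable majorant which you have not produced. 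The repair, and the way the cited proofs run, is to pass to the limit through the kernels: for $0\le h_n\uparrow|g_1|$ with $h_n\in L^1(\R^d,\widehat\mu)_b$, monotone convergence on both sides of the identity for $h_n$ gives $R_\alpha|g_1|(x)=\int_0^\infty e^{-\alpha t}\bigl(\int|g_1|(y)\,P_t(x,dy)\bigr)dt<\infty$, which in particular shows that $|g_1|$ is $P_t(x,\cdot)$-integrable for a.e.\ $t$ (this is also what gives meaning to $P_tg_1(x)$ in the final formula); the signed case then follows by subtraction, while the bounded part $g_2$ is handled by dominated convergence with majorant $\|g_2\|_{L^\infty(\R^d,\widehat\mu)}e^{-\alpha t}$. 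With these repairs your plan coincides with the paper's intended proof.
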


\section{Well-posedness} \label{effwegewe}
\subsection{Weak existence}
The following assumption will in particular be necessary to obtain a Hunt process with transition function $(P_t)_{t\ge 0}$ and consequently a weak solution to the corresponding SDE for every starting point. It will be first used in Theorem \ref{existhunt4} below.
\begin{itemize}
\item[\bf (C2)]: {\bf (C1)} holds  and $\mathbf{G}=\frac{1}{2 \psi} \text{div}(A +C)    + \mathbf{H} \in L_{loc}^s(\R^d, \R^d, \widehat{\mu})$ ($s$ is as in {\bf (C1)}, $\widehat{\mu}$ as in Remark \ref{aboutrho}(i)).
\end{itemize}
\begin{remark}
{\bf (C2)} is more general than \cite[{\bf (a), (b)}]{LST22}. Indeed, in the special case where $\psi\equiv 1$, thus $q=\infty$, we may choose $s=\frac{pd}{p+d}$ in {\bf (C2)}. Then \cite[{\bf (a), (b)}]{LST22} is still a stronger assumption and implies  {\bf (C2)}. In particular Theorems \ref{existhunt4}  and \ref{weakexistence4} below are extensions of \cite[Theorem 3.11 and 3.22]{LST22}.
\end{remark}
Let us mention some conditions that imply {\bf (C2)}.
\begin{remark} \label{domgen}
\begin{itemize}
\item[(i)]
Assume  {\bf (C1)} and $ \psi \mathbf{G}  \in L^{s}_{loc}(\R^d,\R^d)$. Then, since $s>\frac{d}{2} \geq 1$, for any bounded open subset $V$ of $\R^d$ it holds
$$
\int_{V}\|\mathbf{G}\|^{s} d \widehat{\mu}  \le \| \rho \|_{L^{\infty}(V)}\int_{V} \|\psi \mathbf{G}\|^s \big |\frac{1}{\psi}\big |^{s-1}
dx \leq  \| \rho \|_{L^{\infty}(V)} \big \| \frac{1}{\psi}\big \|_{L^{\infty}(V)}^{s-1}  \int_{V} \| \psi \mathbf{G} \|^s dx < \infty.
$$
Hence, {\bf (C2)} is satisfied. 
\item[(ii)]
Assume {\bf (C1)} and $\mathbf{G} \in L^{\frac{sq}{q-1}}_{loc}(\R^d,\R^d)$, where $\frac{sq}{q-1}:=s$, if $q=\infty$. Then for any bounded open subset $V$ of $\R^d$, it holds
$$
\int_{V}\|\mathbf{G}\|^{s} d \widehat{\mu} = \int_{V} \| \mathbf{G} \|^s \cdot \rho \psi dx \leq  \|\mathbf{G}\|^s_{L^\frac{sq}{q-1}(V)}\|\rho \psi \|_{L^q(V)}< \infty
$$
Hence, {\bf (C2)} is satisfied. 
\end{itemize}
\end{remark}

\begin{example} \label{forexiste}
Let $d\ge 2$. Let $A=(a_{ij})_{1 \leq i,j \leq d}$ be a symmetric matrix of measurable functions which is (pointwise almost everywhere) locally uniformly strictly elliptic (cf. {\bf (C1)} for the definition). Moreover, assume that 
$a_{ij}, c_{ij} \in VMO_{loc}$ and that $c_{ij}\in L_{loc}^{\infty}(\R^d)$ for each $1 \leq i,j \leq d$. Let $\psi \in L^q_{loc}(\R^d)$ (where $q$ is specified below), $\psi>0$ a.e. on $\mathbb{R}^d$ and $\frac{1}{\psi} \in L_{loc}^{\infty}(\R^d)$.
\begin{itemize}

\item[(i)]
Let $p \in (d, \infty)$, $q \in [p, \infty]$ and $s:=d$. Assume that $\psi \mathbf{H} \in L^p_{loc}(\mathbb{R}^d, \mathbb{R}^d)$, ${\rm div}A  \in L^2_{loc}(\mathbb{R}^d, \mathbb{R}^d, dx)$ and  ${\rm div} (A +C) \in L_{loc}^s(\mathbb{R}^d, \mathbb{R}^d, dx)$. Then, {\bf (C1)} and subsequently {\bf (C2)} is satisfied by Remark \ref{domgen}(i).

\item[(ii)]
Let $p \in (d, \infty)$, $q  \in  [2p, \infty]$ and $s := \frac{2}{3}d$. Assume that $\psi \bold{H} \in L^p_{loc}(\mathbb{R}^d, \mathbb{R}^d)$, ${\rm div}A  \in L^2_{loc}(\mathbb{R}^d, \mathbb{R}^d, dx)$ and ${\rm div} (A +C) \in L_{loc}^{s\vee2}(\mathbb{R}^d, \mathbb{R}^d, dx)$. Then, {\bf (C1)} and subsequently {\bf (C2)} is  satisfied by Remark \ref{domgen}(i).

\item[(iii)]
Let $p \in (d, \infty)$, $q := \infty$ and $s \in (\frac{d}{2}, \infty)$ be arbitrarily chosen, such that $p\ge s \ge 2$. Assume that $\psi \bold{H} \in L^p_{loc}(\mathbb{R}^d, \mathbb{R}^d)$, ${\rm div} A  \in L^2_{loc}(\mathbb{R}^d, \mathbb{R}^d, dx)$ and\, ${\rm div} (A +C) \in L_{loc}^s(\mathbb{R}^d, \mathbb{R}^d, dx)$. Then, {\bf (C1)} and subsequently {\bf (C2)} is  satisfied  by Remark \ref{domgen}(i).

\item[(iv)]

Let $p \in (d, \infty)$, $q \in (\frac{d}{2}, \infty]$, $q\ge 2$, and  $s \in (\frac{d}{2}, \infty)$ be such that $\frac{1}{q}+\frac{1}{s}<\frac{2}{d}$. Assume that $\widehat{\bold{H}} \in L_{loc}^{\frac{sq}{q-1}}(\mathbb{R}^d, \mathbb{R}^d)$,
$\psi \widehat{\bold{H}} \in L^p_{loc}(\mathbb{R}^d, \mathbb{R}^d)$, ${\rm div} A \in L^p_{loc}(\mathbb{R}^d, \mathbb{R}^d)$ and $C=0$. Let $\bold{H}:=\widehat{\bold{H}}-\frac{1}{2\psi} {\rm div} A$. Then, {\bf (C1)} and subsequently {\bf (C2)} is  satisfied by Remark \ref{domgen}(ii).

\item[(v)] In order to present more explicit conditions that imply the conditions of (iv) above let $q \in (d, \infty]$ and $s \in (\frac{d}{2}, \infty)$ satisfy $\frac{1}{q}+\frac{1}{s}<\frac{2}{d}$ and choose $k \in (\frac{dq}{q-d}, \infty]$, i.e. $\frac{1}{k}+\frac{1}{q}<\frac{1}{d}$.  Then choose $p \in (d, (\frac{1}{k}+\frac{1}{q})^{-1} ]$, i.e. $\frac{1}{k}+\frac{1}{q}<\frac{1}{p}<\frac{1}{d}$. If\,  $\widehat{\bold{H}}\in L^{k \vee \frac{sq}{q-1}}_{loc}(\mathbb{R}^d, \mathbb{R}^d)$, ${\rm div} A \in L^p_{loc}(\mathbb{R}^d, \mathbb{R}^d)$, $C=0$ and $\bold{H}:=\widehat{\bold{H}}-\frac{1}{2\psi} {\rm div} A$, then the conditions of (iv) are met, so that {\bf (C1)} and subsequently {\bf (C2)} is  satisfied.

\end{itemize}

\end{example}

\centerline{}
\noindent
Using \cite[Proposition 3.11]{LT24a}, analogously to \cite[Theorem 3.12]{LT18} we obtain:
\begin{theorem}\label{existhunt4}
Under the assumption {\bf (C2)}, there exists a Hunt process
\[
\M =  (\Omega, (\F_t)_{t \ge 0}, \F, (X_t)_{t \ge 0}, (\P_x)_{x \in \R^d\cup \{\Delta\}}   )
\]
with state space $\R^d$ and life time 
$$
\zeta=\inf\{t\ge 0\,:\,X_t=\Delta\}=\inf\{t\ge 0\,:\,X_t\notin \R^d\}, 
$$
having the transition function $(P_t)_{t \ge 0}$ as transition semigroup, such that $\M$ has continuous sample paths in the one point compactification $\R^d_{\Delta}$ of $\R^d$ with the cemetery $\Delta$ as point at infinity, i.e. for any $x \in \R^d$,
$$
\P_{x} \big( \left\{ \omega \in \Omega\, :\, X_{\cdot}(\omega) \in C\big([0, \infty), \R^d_{\Delta}\big) ,\, X_{t}(\omega) = \Delta, \, \forall t \geq \zeta(\omega) \right\} \big) =1.
$$
\end{theorem}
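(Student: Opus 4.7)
The plan is to mimic the proof of \cite[Theorem 3.12]{LT18}, replacing its input with the results of Section \ref{fewfore} and the abstract construction of \cite[Proposition 3.11]{LT24a}. First I would observe that under {\bf (C2)}, all the hypotheses of \cite[Proposition 3.11]{LT24a} are in place: by Theorem \ref{helholmop} together with Remark \ref{aboutrho}(i), assumption \cite[{\bf (A)}]{LT24a} holds for $\widehat{\mu}=\rho\psi\, dx$, which already gives the sub-Markovian $C_0$-semigroup $(T_t)_{t>0}$ on $L^1(\R^d,\widehat{\mu})$ and the associated generalized Dirichlet form. The supplementary content of {\bf (C2)}, namely $\mathbf{G}\in L^s_{loc}(\R^d,\R^d,\widehat{\mu})$, delivers the local integrability of the drift that \cite[Proposition 3.11]{LT24a} requires in order to obtain a Krylov-type tightness bound.

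Second, Proposition \ref{regular2-3} equips us with genuine pointwise sub-probability transition and resolvent kernels $P_t(x,\cdot)$, $R_\alpha(x,\cdot)$ on $(\R^d,\mathcal{B}(\R^d))$ such that $(P_t)_{t>0}$ is $L^{[\frac{2\widetilde{p}}{\widetilde{p}-2},\infty]}$-strong Feller, $(R_\alpha)_{\alpha>0}$ is $L^{[s,\infty]}$-strong Feller, and the Laplace identity $R_\alpha g=\int_0^\infty e^{-\alpha t} P_t g\, dt$ holds. Feeding these data into \cite[Proposition 3.11]{LT24a} produces a right process on $\R^d$ which extends to a strong Markov process $\M$ on the one-point compactification $\R^d_\Delta$, with the prescribed semigroup $(P_t)_{t\ge 0}$ as transition function and the cemetery $\Delta$ as unique trap; quasi-left continuity, i.e.\ the Hunt property, comes from the standard Dirichlet form argument applied to the generalized form associated to $(T_t)_{t>0}$.

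Finally, to upgrade the right-continuity with left limits of $\M$ to full continuity in $\R^d_\Delta$, I would follow \cite[Theorem 3.12]{LT18} and verify the local no-jump estimate
$$
\lim_{t\downarrow 0}\frac{1}{t}\,P_t(x,B_\varepsilon(x)^c)=0
$$
uniformly in $x$ on compact subsets of $\R^d$ for every $\varepsilon>0$. This is obtained by testing the kernel $P_t$ against bounded approximations of $1_{B_\varepsilon(x)^c}$ and invoking the joint space-time continuity together with the local sup-estimate of Theorem \ref{1-3reg3}; combined with quasi-left continuity and the strong Markov property, this rules out jumps inside $\R^d$ and forces the paths to be $\R^d_\Delta$-continuous up to and after $\zeta$.

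The main obstacle is precisely this last step: the Hunt-process construction itself is an abstract consequence of \cite[Proposition 3.11]{LT24a}, but the pathwise no-jump estimate must be derived from the refined parabolic regularity of Section \ref{fewfore}, since neither Schauder nor classical $W^{2,p}$ theory is available in our setting due to the $VMO_{loc}$ coefficients and the degenerate weight $\tfrac{1}{\psi}$.
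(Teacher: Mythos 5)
Your outer framework (verify {\bf (C1)}/{\bf (C2)}, invoke Theorem \ref{helholmop} and Remark \ref{aboutrho}(i) to get \cite[{\bf (A)}]{LT24a}, use the kernels and strong Feller properties of Theorem \ref{rescondojc}, Theorem \ref{1-3reg3} and Proposition \ref{regular2-3}, and then apply \cite[Proposition 3.11]{LT24a} following the scheme of \cite[Theorem 3.12]{LT18}) is exactly the paper's (citation-level) proof. The problem is your final step, which is where you yourself locate the main difficulty: the passage from a Hunt process to \emph{continuous} sample paths in $\R^d_\Delta$. You propose to verify the Dynkin--Kinney type criterion $\lim_{t\downarrow 0}\frac1t P_t(x,B_\varepsilon(x)^c)=0$, locally uniformly in $x$, "by testing the kernel against bounded approximations of $1_{B_\varepsilon(x)^c}$ and invoking the joint space-time continuity together with the local sup-estimate of Theorem \ref{1-3reg3}". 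This does not work: Theorem \ref{1-3reg3} gives joint continuity of $P_\cdot f(\cdot)$ only on $\R^d\times(0,\infty)$, with bounds on compact time intervals bounded away from $0$, and Lemma \ref{contiiokc} gives continuity up to $t=0$ only for $f\in D(L_r)$; for a discontinuous $f$ such as (an approximation of) $1_{B_\varepsilon(x)^c}$ you do not even get $P_tf(x)\to 0$ as $t\downarrow0$ from these results, let alone the rate $o(t)$ that the criterion demands. A short-time quantitative kernel estimate is of a completely different nature from the elliptic/parabolic regularity developed in Sections \ref{fewfore} and \ref{someanlyticresults}, and nothing in the paper supplies it. Moreover, even granting the no-jump estimate inside $\R^d$, continuity in $\R^d_\Delta$ with $\Delta$ as point at infinity additionally requires excluding killing from the interior (a jump to $\Delta$ from a point of $\R^d$), which the Dynkin--Kinney criterion plus quasi-left continuity does not give you.

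The route actually taken (implicitly, via \cite[Proposition 3.11]{LT24a} and \cite[Theorem 3.12]{LT18}) avoids any such rate estimate: the process associated with the generalized Dirichlet form is already a diffusion without killing inside, i.e.\ it has continuous paths in $\R^d_\Delta$ for $\widehat{\mu}$-a.e.\ (strictly quasi-every) starting point, because the form is local and has no zero-order part; the regularity results of Section \ref{fewfore} are then used to transfer this a.s.\ path property to \emph{every} starting point through the absolute continuity of the kernels ($P_t(x,\cdot)\ll \widehat{\mu}$) and the Markov property, since under $\P_x$ the law of $(X_{t+s})_{s\ge0}$ is $\int_{\R^d}\P_y(\cdot)\,P_t(x,dy)$, and one lets $t\downarrow0$ using right-continuity at $0$. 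Relatedly, your reading of the role of the extra condition in {\bf (C2)} as producing "a Krylov-type tightness bound" is off the mark: its purpose is that $Lf=\frac12\mathrm{trace}(\widehat{A}\nabla^2f)+\langle\mathbf{G},\nabla f\rangle\in L^s(\R^d,\widehat{\mu})$ for $f\in C_0^\infty(\R^d)$, so that the $L^{[s,\infty]}(\R^d,\widehat{\mu})$-strong Feller resolvent can be applied to $(1-L)f$ in the identification and path-regularity arguments. So while your first two steps reproduce the paper's argument, the concrete mechanism you give for sample-path continuity is a genuine gap and should be replaced by the locality/no-killing property of the generalized Dirichlet form process combined with the absolute-continuity transfer.
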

Based on Proposition \ref{regular2-3}(ii) and \eqref{resolcontra} and using a similar method to the one applied in the proof of \cite[Theorem 3.14]{LST22},  the following Krylov-type estimate is derived.
\begin{proposition} \label{resolkrylov}
Assume {\bf (C2)} and let $\M = (\Omega, (\F_t)_{t \ge 0}, \F, (X_t)_{t \ge 0}, (\P_x)_{x \in \R^d\cup \{\Delta\}} )$ be the Hunt process of Theorem \ref{existhunt4}. Let $r \in [s, \infty]$ and $g \in L^r(\mathbb{R}^d, \widehat{\mu})$. 
Then the following Krylov type estimate holds:
for any ball $B$, there exists a constant $C_{B,r}>0$ depending on $B$ and $r$, such that
$$
\sup_{x\in \overline{B}}\E_x\big [ \int_0^t g(X_s) \, ds \big ]  \le   e^t C_{B,r} \|g\|_{L^r(\R^d, \widehat{\mu})} \quad \text{ for all $t \geq 0$.}
$$
\end{proposition}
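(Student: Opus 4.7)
The plan is to reduce to the nonnegative case, rewrite the time integral of $g(X_s)$ as an integral of the semigroup, dominate it by the $1$-resolvent, and then invoke the pointwise resolvent estimate \eqref{resolcontra}. Since $\big|\int_0^t g(X_s)\,ds\big| \le \int_0^t |g|(X_s)\,ds$ and $\||g|\|_{L^r(\R^d,\widehat{\mu})} = \|g\|_{L^r(\R^d,\widehat{\mu})}$, I may assume $g \ge 0$. Choosing a Borel representative and extending by $g(\Delta):=0$, the transition kernel $P_s(x,\cdot)$ from Proposition \ref{regular2-3}(ii)---which is the transition function of $\M$ by Theorem \ref{existhunt4}---yields $P_s g(x) = \E_x[g(X_s)]$ for every $x \in \R^d$ and every $s \ge 0$.

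Next, the continuity of the sample paths of $\M$ (in the one-point compactification $\R^d_\Delta$) makes $(s,\omega)\mapsto g(X_s(\omega))$ jointly Borel measurable, so Tonelli's theorem gives
\[
\E_x\Big[\int_0^t g(X_s)\,ds\Big] = \int_0^t P_s g(x)\,ds \le e^t \int_0^t e^{-s}P_s g(x)\,ds \le e^t \int_0^\infty e^{-s}P_s g(x)\,ds = e^t R_1 g(x),
\]
where the trivial bound $1 \le e^{t-s}$ on $[0,t]$ is used in the first inequality and the final identity is the concluding formula of Proposition \ref{regular2-3}, which is applicable because $L^r(\R^d,\widehat{\mu}) \subset L^s(\R^d,\widehat{\mu})+L^\infty(\R^d,\widehat{\mu})$ for $r \in [s,\infty]$.

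To conclude, I pick open balls $U,V$ with $\overline{B}\subset U$ and $\overline{U}\subset V$, and apply \eqref{resolcontra} with $\alpha=1$:
\[
\sup_{x\in\overline{B}} R_1 g(x) \le \|R_1 g\|_{C^{0,\gamma}(\overline{U})} \le c_3 \|g\|_{L^r(\R^d,\widehat{\mu})},
\]
where the constant $c_3$ depends only on $B$ (through the choice of $U$ and $V$) and on $r$. Combining the two displays and setting $C_{B,r}:=c_3$ yields the claim.

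The step that needs most care is the identification of pointwise objects with $L^p$-equivalence classes: one must be sure that the version $R_1 g$ to which \eqref{resolcontra} applies is precisely the integral $\int_0^\infty e^{-s}P_s g(\cdot)\,ds$ against the Hunt process kernels, and that $P_s g(x)=\E_x[g(X_s)]$ holds for \emph{every} $x\in\R^d$, not just $\widehat{\mu}$-a.e. This is exactly what Proposition \ref{regular2-3} together with Theorem \ref{existhunt4} provides, by asserting that the semigroup and resolvent kernels of $\M$ coincide with the continuous $dx$-versions of $(T_t)_{t>0}$ and $(G_\alpha)_{\alpha>0}$; once this pointwise identification is in place, the remaining steps are elementary.
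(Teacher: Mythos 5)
Your argument is correct and is essentially the proof the paper intends: the paper derives the estimate "based on Proposition \ref{regular2-3}(ii) and \eqref{resolcontra}, using a similar method to \cite[Theorem 3.14]{LST22}", which is exactly your reduction to $g\ge 0$, the identification $\E_x[g(X_s)]=\int g(y)P_s(x,dy)$ via the transition kernel of $\M$, the bound $\int_0^t P_s g(x)\,ds\le e^t R_1 g(x)$, and the application of the resolvent estimate \eqref{resolcontra} with $\alpha=1$. Your closing remark on the pointwise identification of $R_1 g$ and $P_s g$ with the kernels of the Hunt process is precisely the point Theorem \ref{existhunt4} and Proposition \ref{regular2-3} are invoked for, so no gap remains.
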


\noindent
The following theorem can be proved analogously to the proof of \cite[Theorem 3.22]{LST22}.
\begin{theorem}\label{weakexistence4}
Assume {\bf (C2)} is satisfied. Consider the Hunt process $\M$ from Theorem \ref{existhunt4} with coordinates $X_t=(X_t^1,...,X_t^d)$. Let $(\sigma_{ij})_{1 \le i \le d,1\le j \le m}$, $m\in \N$ arbitrary but fixed, be any matrix consisting of measurable functions for all $1 \leq i,j \leq d$, such that $A=\sigma \sigma^T$,  i.e. 
$$
a_{ij}(x)=\sum_{k=1}^m \sigma_{ik}(x) \sigma_{jk}(x), \ \  \text{for } a.e. \  x\in \R^d, \ 1\le i,j \leq d.
$$
Set 
$$
\widehat{\sigma}=\sqrt{\frac{1}{\psi}}\cdot \sigma\text{ , i.e. }\widehat{\sigma}_{ij}=\sqrt{\frac{1}{\psi}}\cdot \sigma_{ij}, \ 1 \leq i \leq d,\ 1 \leq j \leq m.
$$

\begin{itemize}
\item[(i)] Then on a standard extension 
of $(\widetilde{\Omega}, (\widetilde{\mathcal{F}}_t)_{t\ge 0}, \widetilde{\mathcal{F}}, \widetilde{\P}_x )$, $x\in \R^d$, which we denote for notational convenience again 
by $(\Omega, (\mathcal{F}_t)_{t\ge 0}, \mathcal{F}, \P_x )$, $x\in \R^d$, there exists for every $n \in \N$ an $m$-dimensional  standard $(\mathcal{F}_t)_{t \geq 0}$-Brownian motion $(W_{n,t})_{t \geq 0} = \big((W_{n,t}^{1},\dots,W_{n,t}^{m})\big)_{t \geq 0}$ starting from zero such that $\P_x$-a.s. for any $x=(x_1,\ldots,x_d)\in \R^d$, $i=1,\dots,d$
\begin{equation*}
X_t^i = x_i+ \sum_{k=1}^m \int_0^t \widehat{\sigma}_{ik} (X_s) \, dW_{n,s}^{k} +   \int^{t}_{0}   g_i(X_s) \, ds, \quad 0\le  t \leq D_n,
\end{equation*}
where 
$$
D_A:=\inf\{t\ge0\,:\, X_t\in  A\}, \qquad D_n:=D_{\R^d\setminus B_n}, n\ge 1.
$$

Moreover, it holds that $W_{n,s} = W_{n+1, s}$ on $\{s \leq D_n\}$, hence with $W^k_s:=\lim_{n \rightarrow \infty} W^k_{n,s}$, $k=1, \ldots, m$, and $W_s:=(W^1_s, \ldots, W^m_s)$ on $\{ s < \zeta \}$ we get for $1 \leq i \leq d$,
\begin{equation*}
X_t^i = x_i+ \sum_{k=1}^m \int_0^t \widehat{\sigma}_{ij} (X_s) \, dW_s^{k} +   \int^{t}_{0}   g_i(X_s) \, ds, \quad 0\leq  t < \zeta,
\end{equation*}
$\P_x$-a.s. for any $x \in \R^d$. In particular, if $\M$ is non-explosive, then $(W_t)_{t \geq 0}$ is a standard $(\mathcal{F}_t)_{t \geq 0}$-Brownian motion.

\item[(ii)] Suppose that $m=d$ and $\M$ is non-explosive.
Then it holds that $\P_x$-a.s. for any $x=(x_1,\ldots,x_d)\in \R^d$, 
\begin{equation*} \label{itosdeweakglo}
X_t = x+ \int_0^t \widehat{\sigma} (X_s) \, dW_s +   \int^{t}_{0}   \mathbf{G}(X_s) \, ds, \quad 0\le  t <\infty,     
\end{equation*}
i.e. it holds that $\P_x$-a.s. for any $i=1,\ldots,d$
\begin{equation*}\label{weaksolutioneq}
X_t^i = x_i+ \sum_{j=1}^d \int_0^t \widehat{\sigma}_{ij} (X_s) \, dW_s^j +   \int^{t}_{0}   g_i(X_s) \, ds, \quad 0\le  t <\infty,
\end{equation*}
where $W = (W^1,\dots,W^d)$ is a $d$-dimensional standard $(\mathcal{F}_t)_{t \geq 0}$-Brownian motion starting from zero.
\end{itemize}
\end{theorem}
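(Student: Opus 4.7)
The plan is to follow the strategy of the proof of \cite[Theorem 3.22]{LST22}: apply a generalized Fukushima-type decomposition to suitable localizations of the coordinate projections, identify the drift term via the action of $L$ and the covariation via the energy measure of the associated generalized Dirichlet form, and finally extract a driving Brownian motion through the martingale representation theorem on a standard extension of the probability space.

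Concretely, for each $n \in \N$ and each $i \in \{1,\ldots,d\}$, fix a cutoff $\chi_n \in C_0^\infty(\R^d)$ with $\chi_n \equiv 1$ on $\overline{B}_n$ and set $f_i^{(n)}(x) := x_i\, \chi_n(x) \in C_0^\infty(\R^d)$. Since $\M$ is the Hunt process associated with the $L^1(\R^d,\widehat{\mu})$-semigroup of $\overline{L}$, the generalized Fukushima-type decomposition (available in this framework via the results invoked in Theorem \ref{existhunt4}; compare \cite{LT18,LT24a,LST22}) yields
\[
f_i^{(n)}(X_t) - f_i^{(n)}(x) = M_t^{(n),i} + \int_0^t L f_i^{(n)}(X_s)\,ds, \quad \P_x\text{-a.s.,}
\]
with $M^{(n),i}$ a continuous local martingale additive functional, initially for quasi-every starting point $x$. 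The strong Feller property from Proposition \ref{regular2-3} together with the Krylov-type estimate of Proposition \ref{resolkrylov} (applied to the $L^s_{loc}(\R^d,\widehat{\mu})$-component $\mathbf{G}$ from {\bf (C2)}) allow one to rule out exceptional sets, so the identity holds for every $x \in \R^d$. Since $\chi_n \equiv 1$ and $\nabla \chi_n \equiv 0$ on $B_n$, one has $L f_i^{(n)}(X_s) = \mathbf{G}_i(X_s)$ on $\{s \le D_n\}$, producing $X_t^i - x_i = M_t^{(n),i} + \int_0^t \mathbf{G}_i(X_s)\,ds$ up to $D_n$.

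The covariation of the martingale parts is computed from the energy measure associated with $L^0$, yielding, again on $\{t \le D_n\}$,
\[
\langle M^{(n),i}, M^{(n),j}\rangle_t = \int_0^t \widehat{a}_{ij}(X_s)\,ds = \sum_{k=1}^m \int_0^t \widehat{\sigma}_{ik}(X_s)\widehat{\sigma}_{jk}(X_s)\,ds.
\]
Because $\widehat{A}$ may degenerate on $\{\psi = \infty\}$ (equivalently on the zero set of $\tfrac{1}{\psi}$), one cannot simply invert $\widehat{\sigma}$; instead, on a standard extension of $(\widetilde{\Omega},(\widetilde{\mathcal{F}}_t),\widetilde{\mathcal{F}},\widetilde{\P}_x)$ carrying an auxiliary independent $m$-dimensional Brownian motion, the classical martingale representation for continuous local martingales with absolutely continuous covariation produces an $m$-dimensional standard $(\mathcal{F}_t)$-Brownian motion $(W_{n,t})_{t\ge 0}$ with
\[
M^{(n),i}_{t \wedge D_n} = \sum_{k=1}^m \int_0^{t \wedge D_n} \widehat{\sigma}_{ik}(X_s)\,dW_{n,s}^k.
\]
Combining the two displays yields the SDE representation asserted in (i) on $[0,D_n]$.

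To obtain the consistency $W_{n,s} = W_{n+1,s}$ on $\{s \le D_n\}$, one constructs $W_{n+1}$ inductively so that it coincides with $W_n$ on $[0,D_n]$ and is prolonged past $D_n$ by applying the martingale representation to the portion of the process within $B_{n+1} \setminus B_n$, reusing the auxiliary noise where $\widehat{\sigma}$ degenerates. This permits the pointwise definition $W^k_s := \lim_{n\to\infty} W^k_{n,s}$ on $\{s < \zeta\} = \bigcup_n\{s \le D_n\}$ and delivers the SDE representation up to $\zeta$. For (ii), take $m=d$; by hypothesis $\zeta = \infty$ $\P_x$-a.s., so the limiting $W = (W^1,\ldots,W^d)$ is a genuine global $d$-dimensional $(\mathcal{F}_t)$-Brownian motion, and the global SDE follows from (i). The principal technical obstacle is the careful handling of the degeneracy of $\widehat{A}$ in the martingale representation, which necessitates the passage to the standard extension with auxiliary noise, combined with the verification (via the strong Feller property and the Krylov-type estimate) that the decomposition and integrability of $\int_0^t \mathbf{G}(X_s)\,ds$ are pointwise-valid for every $x \in \R^d$ rather than merely quasi-everywhere.
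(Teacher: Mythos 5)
Your outline is sound and reaches the right conclusion by the same overall skeleton as the paper, which simply defers to the proof of \cite[Theorem 3.22]{LST22}: decompose the stopped, cut-off coordinate functions into a continuous local martingale plus $\int_0^{t\wedge D_n}\mathbf{G}_i(X_s)\,ds$, identify the covariations as $\int_0^{t\wedge D_n}\widehat{a}_{ij}(X_s)\,ds$, and then invoke the representation theorem for continuous local martingales (on a standard extension carrying auxiliary noise, which is exactly what handles the degeneracy of $\widehat{\sigma}$ and the case $m\ge d$), followed by the consistency-in-$n$ patching and the non-explosion argument for (ii). The one genuine difference is how you obtain the semimartingale decomposition for \emph{every} starting point: you invoke a generalized Fukushima-type decomposition, which a priori holds only for quasi-every $x$, and then claim the exceptional set can be removed by the strong Feller property of Proposition \ref{regular2-3} and the Krylov estimate of Proposition \ref{resolkrylov}. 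The route taken in \cite{LST22} (and intended here) avoids quasi-everywhere statements altogether: one proves directly, for $u\in C_0^\infty(\R^d)\subset D(\overline{L})$, that $u(X_t)-u(x)-\int_0^t Lu(X_s)\,ds$ is a $\P_x$-martingale for each fixed $x$, via the pointwise Kolmogorov/Dynkin identity $P_tu-u=\int_0^tP_sLu\,ds$ built from the regularized semigroup $(P_t)_{t>0}$ and the Krylov-type estimate (which makes $\int_0^t Lu(X_s)\,ds$ well defined and independent of the Borel version); the covariation is then obtained by applying the same identity to products of coordinates rather than to an abstract energy measure. Your upgrade from quasi-everywhere to everywhere is the delicate point of your variant — it needs an argument (e.g. starting the decomposition at time $t_0>0$ via the Markov property and absolute continuity of $P_{t_0}(x,\cdot)$, then letting $t_0\downarrow 0$ with the continuity and integrability controls), not just a citation of strong Fellerness — so if you carry out your route, that step should be written out; the direct Dynkin-formula route buys you the every-$x$ statement without this extra layer, which is why the paper follows it.
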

Analogously to the proof of \cite[Corollary 3.27]{LST22}, we obtain the following non-explosion criterion.
\begin{proposition}\label{conservativenesscond}
Assume {\bf (C2)} and \eqref{conscondit}. Then, $\mathbb{M}$ is non-explosive, i.e. $\P_x(\zeta=\infty)=1$ for all $x\in \R^d$ and Theorem \ref{weakexistence4}(ii) applies.
\end{proposition}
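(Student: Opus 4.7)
The strategy is Khasminskii's classical Lyapunov function test for non-explosion, adapted to the present framework where the weak solution is obtained from the Hunt process $\mathbb{M}$ of Theorem \ref{existhunt4} together with the semimartingale representation of Theorem \ref{weakexistence4}(i). Within the setting of Proposition \ref{conservativenesscond}, the drift $\widehat{\mathbf{H}}$ appearing in \eqref{conscondit} is to be identified with the Itô drift $\mathbf{G}$ of the underlying SDE, which is also the first order coefficient of the operator $L$. The plan is to exhibit a $C^2$ Lyapunov function $V\colon \R^d\to[0,\infty)$ with $V(x)\to \infty$ as $\|x\|\to \infty$ and with
\[
LV(x)\ \le\ c\bigl(1+V(x)\bigr),\qquad x\in \R^d\setminus \overline{B}_{N_0},
\]
for some constant $c>0$, and then to use this estimate together with Itô's formula on each pre-exit interval $[0,D_n]$ to preclude explosion.

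The natural choice is $V(x):=\ln(\|x\|^2+1)$, which is smooth, nonnegative, radially increasing, and tends to $+\infty$ as $\|x\|\to \infty$, with globally bounded first and second derivatives. A direct computation gives
\[
LV(x)\ =\ \frac{2}{\|x\|^2+1}\left[\frac{\mathrm{trace}(\widehat{A}(x))}{2}-\frac{\langle \widehat{A}(x)x,x\rangle}{\|x\|^2+1}+\langle \mathbf{G}(x),x\rangle\right].
\]
For $\|x\|\ge N_0\vee 1$ one compares this bracket with the quantity on the left hand side of \eqref{conscondit} (using $\|x\|^2+1\asymp \|x\|^2$ and the positive semi-definiteness of $\widehat{A}$ to absorb the discrepancy between $\|x\|^2$ and $\|x\|^2+1$ in the quadratic form term); invoking \eqref{conscondit} yields
\[
LV(x)\ \le\ \frac{2}{\|x\|^2+1}\cdot M\|x\|^2\bigl(\ln\|x\|+1\bigr)\ \le\ c_1\bigl(1+V(x)\bigr),\qquad \|x\|\ge N_0\vee 1,
\]
for some $c_1>0$. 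After replacing $V$ by a smooth modification that coincides with $\ln(\|x\|^2+1)$ outside a slightly larger ball and is constant on $\overline{B}_{N_0\vee 1}$, one obtains a nonnegative $C^2$ function, still called $V$, satisfying $LV\le c(1+V)$ globally off a compact set and $|\nabla V|$, $|\nabla^2 V|$ bounded; on the compact transition region the function $LV$ is controlled by $\mathrm{trace}(\widehat{A})+|\mathbf{G}|\in L^s_{loc}(\widehat{\mu})$, so by the Krylov-type bound of Proposition \ref{resolkrylov} its expected time integral is finite.

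The execution then proceeds as follows. Fix $x\in \R^d$ and $t>0$, and let $D_n=\inf\{s\ge 0:X_s\notin B_n\}$ for $n\ge 1$. By Theorem \ref{weakexistence4}(i), $(X_s)_{0\le s\le D_n}$ is a semimartingale with dispersion $\widehat{\sigma}$ and drift $\mathbf{G}$, so Itô's formula applied to $V\in C^2(\R^d)$ yields
\[
V(X_{t\wedge D_n})\ =\ V(x)+\int_0^{t\wedge D_n}LV(X_s)\,ds+M_{t\wedge D_n}^n,
\]
where $M^n$ is a local martingale whose quadratic variation involves $|\widehat{\sigma}^T\nabla V|^2$, and is bounded on $[0,D_n]$ because $\widehat{A}$ is locally bounded (modulo the discussion above) and $\nabla V$ is globally bounded. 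Taking expectations and splitting $LV$ according to whether $X_s$ lies in the compact modification region or outside it, one obtains
\[
\E_x\bigl[V(X_{t\wedge D_n})\bigr]\ \le\ V(x)+K+c\int_0^t \E_x\bigl[1+V(X_{s\wedge D_n})\bigr]\,ds,
\]
where $K$ bounds the contribution of the compact region via Proposition \ref{resolkrylov}. Gronwall's inequality then gives $\E_x[V(X_{t\wedge D_n})]\le (V(x)+K+ct)e^{ct}$, uniformly in $n$. Since $\|X_{D_n}\|=n$ on $\{D_n\le t\}$, one has $V(X_{t\wedge D_n})\ge \ln(n^2+1)\,\mathbf{1}_{\{D_n\le t\}}$, whence
\[
\P_x(D_n\le t)\ \le\ \frac{(V(x)+K+ct)e^{ct}}{\ln(n^2+1)}\ \xrightarrow[n\to\infty]{}\ 0.
\]
As $D_n\nearrow\zeta$ $\P_x$-a.s., this forces $\P_x(\zeta\le t)=0$ for every $t>0$, giving $\P_x(\zeta=\infty)=1$ for all $x\in \R^d$, as desired. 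The resulting non-explosion together with Theorem \ref{weakexistence4}(i) for $m=d$ produces the global $d$-dimensional Brownian motion $W$ and the global SDE in Theorem \ref{weakexistence4}(ii).

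The main technical obstacle is verifying that the localization up to $D_n$ genuinely permits Itô's formula and produces a true martingale $M^n$, notwithstanding the mere $L^q_{loc}$-integrability of $\psi$ (so $\widehat{A}$ only $L^q_{loc}$-bounded) and the $L^s_{loc}(\widehat{\mu})$-bound on $\mathbf{G}$. This is precisely where the Krylov-type estimate of Proposition \ref{resolkrylov} is indispensable: it guarantees that the time integrals of $LV$ and of $|\widehat{\sigma}^T\nabla V|^2$ along the paths of $\mathbb{M}$ are $\P_x$-a.s. finite and their expectations are controlled by $L^r(\widehat{\mu})$-norms, so that the stopping argument and the passage to the limit $n\to\infty$ can be rigorously justified along the lines of \cite[Corollary 3.27]{LST22}.
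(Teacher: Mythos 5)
Your overall strategy (Khasminskii/Lyapunov test, It\^{o}'s formula up to $D_n$, the Krylov estimate of Proposition \ref{resolkrylov} to handle version-independence and the compact region, Gronwall, and $\P_x(D_n\le t)\to 0$) is exactly the route the paper takes, since its proof is declared to be analogous to \cite[Corollary 3.27]{LST22}. However, there is a concrete flaw in the key analytic step: with $V(x)=\ln(\|x\|^2+1)$ the bound $LV\le c_1(1+V)$ outside a ball does \emph{not} follow from \eqref{conscondit}. Your bracket is $\frac{\mathrm{trace}(\widehat{A})}{2}-\frac{\langle \widehat{A}x,x\rangle}{\|x\|^2+1}+\langle \mathbf{G},x\rangle$, which exceeds the left-hand side of \eqref{conscondit} by the \emph{positive} quantity $\frac{\langle \widehat{A}x,x\rangle}{\|x\|^2(\|x\|^2+1)}$; positive semi-definiteness of $\widehat{A}$ makes this discrepancy harmful, not absorbable, and since {\bf (C2)} imposes no global upper bound on $\widehat{A}=\frac{1}{\psi}A$ (only local bounds), it is not dominated by $(\|x\|^2+1)(\ln\|x\|+1)$. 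Concretely, take $\psi\equiv 1$, $A(x)=\mathrm{id}+(e^{\|x\|}-1)\frac{xx^T}{\|x\|^2}$ and radial drift with $\langle \mathbf{G}(x),x\rangle=\frac{e^{\|x\|}}{2}$ for large $\|x\|$: then the left-hand side of \eqref{conscondit} equals $\frac{d-1}{2}$, so the hypothesis holds, while $LV(x)\approx \frac{2e^{\|x\|}}{(\|x\|^2+1)^2}$ is unbounded, so your inequality $LV\le c_1(1+V)$ fails pointwise.

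The repair is small but necessary: \eqref{conscondit} is tailored to the Lyapunov function $V(x)=\ln\|x\|^2$ (equivalently $2\ln\|x\|$), smoothly modified to be constant on a neighbourhood of $\overline{B}_{N_0\vee 1}$. For this $V$ one computes, outside $\overline{B}_{N_0\vee 1}$, that $LV=\frac{2}{\|x\|^2}\big[\frac{\mathrm{trace}(\widehat{A})}{2}-\frac{\langle\widehat{A}x,x\rangle}{\|x\|^2}+\langle\mathbf{G},x\rangle\big]$, which is exactly $\frac{2}{\|x\|^2}$ times the left-hand side of \eqref{conscondit}, whence $LV\le 2M(\ln\|x\|+1)\le 2M(V+1)$ there; in the above example this gives $LV=\frac{d-1}{\|x\|^2}$, as it should. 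With this choice the remainder of your argument (boundedness of the stopped stochastic integral via local boundedness of $\widehat{A}$ and $\nabla V$, control of the transition region and of the a.e.-defined coefficients through Proposition \ref{resolkrylov}, Gronwall, and the estimate $\P_x(D_n\le t)\le C(t,x)/\ln(n^2+1)\to 0$) goes through and coincides with the intended proof. As written, though, the step identified above would fail under the stated hypotheses.
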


\subsection{Well-posedness of degenerate It\^{o}-SDEs}\label{Section 6}
Throughout this section, we consider the following condition.
\begin{itemize}
\item[{\bf (C3)}:]  {\bf (C2)} holds, with $q\in (d+1,\infty]$ and $p=d+1$. Moreover, $q_0 \in [d+1, \infty)$ is such that $\frac{1}{q_0}+\frac{1}{q} = \frac{1}{d+1}$, $\frac{1}{\infty}:=0$, $C\equiv0$, $\text{div} A \in L_{loc}^{d+1}(\mathbb{R}^d, \mathbb{R}^d)$
 and $\mathbf{G}  =\frac{1}{2 \psi} \text{div}A    + \mathbf{H}\in L^{\infty}_{loc}(\R^d,\R^d)$.
\end{itemize}

\begin{proposition}\label{feokokoe1}
Assume {\bf (C3)}. If $u \in D(L_{q_0})$, then $u \in H_{loc}^{2,d+1}(\R^d)$. Moreover, for any open ball $B$ in $\R^d$  there exists a constant $C>0$, independent of $u$, such that 
$$
\|u\|_{H^{2,d+1}(B)} \leq C \|u\|_{D(L_{q_0})}.
$$
\end{proposition}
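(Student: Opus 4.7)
The plan is to recognize that for $u\in D(L_{q_0})$ the abstract identity $L_{q_0} u=f\in L^{q_0}(\R^d,\widehat{\mu})$ becomes, after multiplying by $\psi$, a non-divergence form elliptic equation with $VMO_{loc}$ principal part and $L^{d+1}_{loc}$ right-hand side, to which the interior $W^{2,p}$-estimate of Theorem \ref{mainregulthm} (based on \cite{DK11,KK19}) applies with $p=d+1$.

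First I would set $f:=L_{q_0}u\in L^{q_0}(\R^d,\widehat{\mu})$; since $\rho,\psi$ are locally bounded above and $\tfrac{1}{\psi}\in L^\infty_{loc}$, the weight $\widehat{\mu}=\rho\psi\,dx$ is locally comparable to Lebesgue measure, so $u,f\in L^{q_0}_{loc}(dx)$. To pass from the abstract relation to a distributional identity I would use the resolvent representation $u=R_1(u-f)$ together with the divergence-form identity \eqref{feppwikmee2} from the proof of Theorem \ref{rescondojc} (with $C\equiv 0$ by {\bf (C3)}), yielding $u\in H^{1,p}_{loc}$ for some $p>d$ via the gradient part of Theorem \ref{mainregulthm}. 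With this preliminary regularity in hand, the distributional identity can be rewritten in non-divergence form as
\[
\tfrac{1}{2}\mathrm{trace}(A\,\nabla^2 u)+\langle \psi\mathbf{G},\nabla u\rangle=\psi f.
\]

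Next I would verify the hypotheses of the non-divergence form $W^{2,d+1}$-estimate: the principal coefficients $a_{ij}\in VMO_{loc}$ are pointwise a.e.\ locally uniformly elliptic by {\bf (C1)}; the drift $\psi\mathbf{G}$ lies in $L^q_{loc}\subset L^{d+1}_{loc}$ because $\mathbf{G}\in L^\infty_{loc}$ by {\bf (C3)} and $\psi\in L^q_{loc}$ with $q>d+1$; and the right-hand side $\psi f$ lies in $L^{d+1}_{loc}$ by H\"older, since $\psi\in L^q_{loc}$, $f\in L^{q_0}_{loc}$, and $\tfrac{1}{q}+\tfrac{1}{q_0}=\tfrac{1}{d+1}$. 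Applying Theorem \ref{mainregulthm} on a pair of concentric balls $B\subset B'$ yields
\[
\|u\|_{H^{2,d+1}(B)}\le C\bigl(\|u\|_{L^{d+1}(B')}+\|\psi f\|_{L^{d+1}(B')}\bigr),
\]
and the right-hand side is bounded, using H\"older once more together with the local bounds on $\rho,\psi$ and $\tfrac{1}{\rho\psi}$, by a constant multiple of $\|u\|_{L^{q_0}(\R^d,\widehat{\mu})}+\|f\|_{L^{q_0}(\R^d,\widehat{\mu})}=\|u\|_{D(L_{q_0})}$.

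The principal difficulty I anticipate is step one: turning the functional-analytic membership $u\in D(L_{q_0})$ into a pointwise/distributional PDE of a form to which the VMO regularity machinery of Section \ref{someanlyticresults} directly applies, since a priori $\nabla^2 u$ need not even exist. Bootstrapping through the resolvent identity \eqref{feppwikmee2} (where the equation is first available in divergence form in $H^{1,2}_{loc}$) to obtain $\nabla u\in L^p_{loc}$ for some $p>d$, and only afterwards rewriting in non-divergence form, is the cleanest way I see to bridge this gap; once that preliminary regularity is in hand, the main regularity theorem of Section \ref{someanlyticresults} does the rest.
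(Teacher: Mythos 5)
Your proposal follows essentially the same route as the paper: write $u=G_1\big((1-L_{q_0})u\big)$, use the divergence-form resolvent identity \eqref{feppwikmee2} (with $C\equiv 0$), and apply the interior $H^{2,d+1}$ estimate of Theorem \ref{mainregulthm}(iv) (using $a_{ij}\in VMO_{loc}$, ${\rm div}A\in L^{d+1}_{loc}$, $\rho\in H^{1,d+1}_{loc}$) together with H\"older's inequality via $\frac{1}{q}+\frac{1}{q_0}=\frac{1}{d+1}$ and the $L^{q_0}(\R^d,\widehat{\mu})$-contraction property, exactly as in the paper's proof. The only deviations are cosmetic: the paper applies Theorem \ref{mainregulthm}(iv) directly in divergence form, first for $f\in C_0^{\infty}(\R^d)$ and then extends the estimate \eqref{uniquelq} by approximation to all $f\in L^{q_0}(\R^d,\widehat{\mu})$, so your intermediate rewriting in non-divergence form (which, as you note, is not literally meaningful with only $H^{1,p}_{loc}$ regularity in hand) is unnecessary — that passage is precisely what Lemma \ref{lem7.3}(ii), via \cite{DK11}, already performs inside Theorem \ref{mainregulthm}(iv).
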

\begin{proof}
By assumption {\bf (C3)} we get $\psi\mathbf{H}=\psi\mathbf{G}-\frac{1}{2}\text{div} A \in L^{d+1}_{loc}(\R^d, \mathbb{R}^d)$. Thus by Theorem \ref{helholmop} $\rho \in H_{loc}^{1,d+1}(\R^d) \cap C_{loc}^{0, 1-\frac{d}{d+1}}(\R^d)$ and consequently one can check that  $\rho \psi \mathbf{B} \in L^{d+1}_{loc}(\R^d, \mathbb{R}^d)$. Let $f \in C_0^{\infty}(\R^d)$. By \eqref{feppwikmee}
\begin{eqnarray*}
&&\int_{\R^d} \big \langle \frac12 \rho A \nabla G_{1}f, \nabla \varphi \big \rangle dx - \int_{\R^d} \langle \rho \psi \mathbf{B}, \nabla G_{1} f \rangle \varphi\, dx  +\int_{\R^d} (\rho \psi G_{1} f) \,\varphi dx  \nonumber \\
&& = \int_{\R^d} (\rho \psi f) \,\varphi dx, \quad \; \text{ for all } \varphi \in C_0^{\infty}(\R^d). \label{fewbllle}
\end{eqnarray*}
Let $B$, $V$ be open balls in $\mathbb{R}^d$ with $\overline{B} \subset V$. Then, by Proposition \ref{basicpropvm}(iii), Proposition \ref{vmoprop}(ii) and Theorem \ref{mainregulthm}(iv), $G_{1} f \in H^{2,d+1}_{loc}(\mathbb{R}^d)$ and there exists a constant $C_2>0$ (cf. Theorem \ref{mainregulthm}(iv)) which is independent of $f$ such that
\begin{align}
&\|G_{1} f \|_{H^{2,d+1}(B)} \leq C_2 \big( \|G_{1} f \|_{L^1(V)}  + \|\rho \psi f\|_{L^{d+1}(V)} \big)   \nonumber \\
& \leq C_2 \big( dx(V)^{1-\frac{1}{q_0}}  \| (\rho \psi)^{-1} \|^{1/q_0}_{L^{\infty}(V)} \|G_{1} f \|_{L^{q_0}(\mathbb{R}^d, \widehat{\mu})} +\|\rho \psi \|_{L^q(V)} \|(\rho \psi)^{-1} \|^{1/q_0}_{L^{\infty}(V)}
\|f\|_{L^{q_0}(\mathbb{R}^d, \widehat{\mu})}  \big) \nonumber \\
&\leq  C_3 \|f\|_{L^{q_0}(\mathbb{R}^d, \widehat{\mu})}, \label{uniquelq}
\end{align}
where we used the $L^{q_0}(\mathbb{R}^d, \widehat{\mu})$-contraction property and $C_3>0$ is a constant independent of $f$. By approximation, \eqref{uniquelq} extends to all $f \in L^{q_0}(\mathbb{R}^d, \widehat{\mu})$. Let $u \in D(L_{q_0})$ and $g = (1-L_{q_0})u \in L^{q_0}(\mathbb{R}^d, \widehat{\mu})$. Then, $u = G_1 g \in H^{2,d+1}(B)$, hence by \eqref{uniquelq}
$$
\|u\|_{H^{2,d+1}(B)} \leq C_3 \| (1-L_{q_0})u\|_{L^{q_0}(\mathbb{R}^d, \widehat{\mu})} \leq C_3 \|u\|_{D(L_{q_0})}.
$$
\end{proof}

\begin{lemma} \label{pjpjwoeij}
Assume {\bf (C1)}. Let $f \in D(\overline{L})_b \cap D(L_s)  \cap D(L_2)$ and consider as in Lemma \ref{contiiokc} the map defined as
$$
u_f:=P_{\cdot} f \in C_b(\R^d \times [0, \infty))
$$ 
Then for any bounded open set $U$ in $\R^d$, $T>0$, $1 \leq i \leq d$, there exist
$$
\partial_t u_f, \, \partial_i u_f \in L^{2, \infty}(U \times (0,T)),
$$
and for each $t \in (0,T)$, it holds
$$
\partial_t u_f(\cdot,t) = T_t L_2 f \in L^{2}(U), \; \text{ and }\; \partial_i u_f(\cdot, t) = \partial_i P_t f  \in L^2(U).
$$
If {\bf (C3)} holds and  $f \in D(L_{q_0})$ where $q_0$ is as in {\bf (C3)}, then there exists $\partial_i \partial_j u_f  \in L^{d+1, \infty}(U \times (0,T))$  for all $1 \leq i,j \leq d$, and for each $t \in (0, T)$
$$
\partial_i \partial_j u_f (\cdot, t ) = \partial_i \partial_j P_t f \in L^{d+1}(U).
$$
\end{lemma}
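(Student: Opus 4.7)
The plan is to establish the three claims separately, translating semigroup identities on $L^r(\R^d, \widehat{\mu})$ into concrete weak-differentiability and uniform integrability statements on $U \times (0,T)$. A key observation I use throughout is that, since $\rho$ is continuous and strictly positive while $\frac{1}{\psi} \in L^\infty_{loc}(\R^d)$ and $\psi \in L^q_{loc}(\R^d)$, the weight $\rho\psi$ is locally bounded above and away from zero on $U$; hence $L^r(U, dx)$- and $L^r(U, \widehat{\mu})$-norms are mutually dominated on $U$ for every $r \in [1, q]$.

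For the time derivative, since $f \in D(L_2)$ the standard $C_0$-semigroup calculus on $L^2(\R^d, \widehat{\mu})$ gives that $t \mapsto T_t f$ is $C^1$ with derivative $T_t L_2 f$, contractively bounded in $L^2(\R^d, \widehat{\mu})$ by $\|L_2 f\|_{L^2(\R^d, \widehat{\mu})}$. Transferring to $L^2(U, dx)$ via the weight comparison yields $u_f \in C^1([0,T]; L^2(U))$ with $\sup_t \|\partial_t u_f(\cdot, t)\|_{L^2(U)} < \infty$; an integration by parts in $t$ against arbitrary $\varphi \in C_0^\infty(U \times (0,T))$ then identifies this strong derivative with the distributional time derivative of $u_f$, delivering $\partial_t u_f \in L^{2,\infty}(U \times (0,T))$ and $\partial_t u_f(\cdot, t) = T_t L_2 f$ at every $t \in (0,T)$.

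For the first spatial derivative, invariance of $D(\overline{L})_b$ under $T_t$ together with \cite[Theorem 3.5(i)]{LT24a} gives $T_t f \in D(\overline{L})_b \subset D(\mathcal{E}^0) \subset H^{1,2}_{loc}(\R^d)$ for every $t$, hence $\partial_i P_t f \in L^2(U)$ pointwise in $t$. The main technical obstacle is to upgrade this to a uniform bound in $t$. My plan is a Caccioppoli estimate applied to the elliptic equation that $w_t := P_t f$ satisfies at each fixed $t$: the identity $L w_t = \partial_t u_f(\cdot, t) = T_t L_2 f$ from Part 1 amounts, in weak form obtained from \eqref{feppwikmee} with $\alpha = 0$, $G_\alpha f$ replaced by $w_t$, and $f$ replaced by $-T_t L_2 f$, to
$$
\int_{\R^d} \big\langle \tfrac{1}{2} \rho A \nabla w_t, \nabla \varphi \big\rangle dx - \int_{\R^d} \langle \rho \psi \mathbf{B}, \nabla w_t \rangle \varphi\, dx = -\int_{\R^d} \rho \psi (T_t L_2 f) \varphi\, dx, \quad \varphi \in C_0^\infty(\R^d).
$$
Testing with $\eta^2 w_t$ for $\eta \in C_0^\infty(\R^d)$ with $\eta \equiv 1$ on $U$, and invoking the local uniform ellipticity of $A$ from \textbf{(C1)}, the local $L^2$-bound on $\rho\psi\mathbf{B}$ from Theorem \ref{helholmop}, the sub-Markovian bound $\|T_t f\|_{L^\infty} \leq \|f\|_{L^\infty}$, and Young's inequality to absorb the gradient terms on the right, yields $\|\nabla P_t f\|_{L^2(U)} \leq C(\|f\|_{L^\infty} + \|L_2 f\|_{L^2(\R^d,\widehat{\mu})})$ with $C$ independent of $t$. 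A standard Fubini-type argument then identifies the spatial weak derivatives of $u_f$ on $U \times (0,T)$ with the time-slice derivatives $\partial_i P_t f$, and the claim $\partial_i u_f \in L^{2,\infty}(U \times (0,T))$ follows.

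For the second spatial derivative under \textbf{(C3)} with $f \in D(L_{q_0})$, the argument is much shorter. Since $T_t$ commutes with $L_{q_0}$ on $D(L_{q_0})$ and is a contraction on $L^{q_0}(\R^d, \widehat{\mu})$, one has $\|T_t f\|_{D(L_{q_0})} \leq \|f\|_{D(L_{q_0})}$ for all $t \geq 0$. Proposition \ref{feokokoe1} applied to $T_t f \in D(L_{q_0})$ gives $P_t f \in H^{2, d+1}(B)$ for any open ball $B$ with $\|P_t f\|_{H^{2, d+1}(B)} \leq C \|f\|_{D(L_{q_0})}$ uniformly in $t$; choosing $B \supset U$ delivers the uniform $L^{d+1}(U)$-bound on $\partial_i \partial_j P_t f$. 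Combining this with the continuity from Lemma \ref{contiiokc} and the same Fubini-type identification as before produces $\partial_i \partial_j u_f \in L^{d+1,\infty}(U \times (0,T))$ together with $\partial_i \partial_j u_f(\cdot, t) = \partial_i \partial_j P_t f$, as required.
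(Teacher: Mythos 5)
Your overall strategy is sound and, for two of the three claims, essentially the paper's: the time-derivative part via the $C^1$-regularity of $t\mapsto T_tf$ on $L^2(\R^d,\widehat{\mu})$ for $f\in D(L_2)$ and transfer to $L^2(U,dx)$ using $\inf_U\rho\psi>0$, and the second-derivative part via Proposition \ref{feokokoe1} together with the $L^{q_0}(\R^d,\widehat{\mu})$-contraction property, are exactly the paper's arguments. Where you genuinely diverge is the uniform-in-$t$ gradient bound: you propose a Caccioppoli estimate for the elliptic equation satisfied by $w_t=P_tf$ at each fixed $t$ (the weak identity you want is legitimate — it is the semigroup analogue of \eqref{feppwikmee}, used by the paper itself in \eqref{maindivdpl} via \cite[Theorem 3.5(i)]{LT24a} — and the absorption of the drift term works because $\|w_t\|_{L^\infty}\le\|f\|_{L^\infty}$ and $\rho\psi\mathbf{B}\in L^2_{loc}(\R^d,\R^d)$, at the price of an approximation argument to admit the test function $\eta^2w_t$). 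The paper instead estimates $\|\nabla P_tf-\nabla P_{t_0}f\|^2_{L^2(B)}$ directly through the form, $\mathcal{E}^0(g,g)\le\int-\overline{L}g\cdot g\,d\widehat{\mu}\le 2\|f\|_{L^\infty}\|\overline{T}_t\overline{L}f-\overline{T}_{t_0}\overline{L}f\|_{L^1(\R^d,\widehat{\mu})}$, which yields the uniform bound \emph{and} norm-continuity of $t\mapsto\partial_iP_tf$ in $L^2(U)$ in one stroke; your route buys a more elementary, purely local PDE estimate but loses the continuity statement.

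That loss is where your write-up has a real (though repairable) gap, and there is also a false preliminary claim. First, the claim that $\rho\psi$ is locally bounded \emph{above} is wrong: under {\bf (C1)} only $\frac{1}{\psi}\in L^\infty_{loc}$ and $\psi\in L^q_{loc}$ are assumed (see e.g. Example \ref{scopepsi}, where $\psi$ blows up at the origin), so the two local norms are not mutually dominated; fortunately every step of your proof only uses the valid direction $\|\cdot\|_{L^r(U,dx)}\le(\inf_U\rho\psi)^{-1/r}\|\cdot\|_{L^r(U,\widehat{\mu})}$, so this is a misstatement rather than a load-bearing error, but it should be corrected. Second, your ``standard Fubini-type argument'' presupposes a jointly measurable representative of $(x,t)\mapsto\partial_iP_tf(x)$ (and of $\partial_i\partial_jP_tf$, $T_tL_2f$) on $U\times(0,T)$; without it, neither membership in $L^{2,\infty}(U\times(0,T))$ nor the Fubini identification of the distributional derivatives makes sense. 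The paper obtains such a version from the continuity of $t\mapsto\partial_iP_tf$ in $L^2(U)$ (resp. in $L^{d+1}(U)$ for the second derivatives) via \cite{Nev65}; your Caccioppoli bound gives only uniform boundedness, so you must add an argument — e.g. weak continuity of $t\mapsto\partial_iP_tf$ in $L^2(U)$ (pair against $\phi\in C_0^\infty(U)$, integrate by parts and use Lemma \ref{contiiokc} with dominated convergence) together with Pettis' measurability theorem, or simply the paper's continuity estimate — before the Fubini step is justified. With these two repairs your proof goes through.
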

\begin{proof}
Assume {\bf (C1)}. Let $f \in D(\overline{L})_b \cap D(L_s)  \cap D(L_2)$ and $t>0$, $t_0 \geq 0$.  Then $P_{t_0} f = \overline{T}_{t_0} f$ a.e., where $\overline{T}_0 := id$ and by \cite[Theorem 3.5(i)]{LT24a}, 
$$
\overline{T}_{t_0} f \in  D(\overline{L})_b \subset D(\mathcal{E}^0).
$$    
Using \eqref{uniellip} and \cite[Theorem 3.5(i)]{LT24a}, for any open ball  $B$ in $\R^d$ with $\overline{U} \subset B$,
\begin{eqnarray}
\| \nabla P_t f - \nabla P_{t_0}f \|^2_{L^2(B)}
&\leq&   2(\lambda_B \inf_{B} \rho)^{-1}\mathcal{E}^0(P_t f - P_{t_0}f, P_t f- P_{t_0}f)  \nonumber \\
&\leq&  2(\lambda_B \inf_{B} \rho)^{-1}  \int_{\R^d} -\overline{L}(\overline{T}_t f- \overline{T}_{t_0}f) \cdot (\overline{T}_t f- \overline{T}_{t_0}f) d \widehat{\mu} \nonumber  \\
&\leq& 4 (\lambda_B \inf_{B} \rho)^{-1}  \|f\|_{L^{\infty}(\R^d, \widehat{\mu})} \|\overline{T}_t\overline{L}f - \overline{T}_{t_0} \overline{L}f \|_{L^1(\R^d, \widehat{\mu})}. \label{feftlklere}
\end{eqnarray}
Likewise, 
\begin{eqnarray*} \label{fekokwe2}
 \| \nabla P_t f \|^2_{L^2(B)} \leq  2(\lambda_B \inf_{B} \rho)^{-1}  \|f\|_{L^{\infty}(\R^d, \widehat{\mu})} \|\overline{T}_t  \overline{L} f\|_{L^1(\R^d, \widehat{\mu})}.
\end{eqnarray*}
For each $i=1, \dots, d$, define the map
$$
\partial_i P_{\cdot} f: [0,T] \rightarrow L^2(U), \; \; \;  t \mapsto \partial_i P_{t} f.
$$
Then by \eqref{feftlklere} and the $L^1(\R^d, \widehat{\mu})$-strong continuity of $(\overline{T}_t)_{t>0}$, the map $\partial_i P_{\cdot} f$ is continuous from $[0,T]$ to $L^2(U)$, hence by \cite[Theorem, p91]{Nev65}, there exists a Borel measurable function $u^i_f$ on $U \times (0,T)$ such that for each $t \in (0,T)$ it holds
$$
u^i_f(\cdot, t) = \partial_i P_t f \in L^2(U).
$$
Using \eqref{feftlklere} and the $L^1(\R^d, \mu)$-contraction property of $(\overline{T}_t)_{t >0}$, we get 
\begin{eqnarray*}
\|u^i_f \|_{L^{2, \infty}(U \times (0,T))} &=& \sup_{t \in (0,T)} \| \partial_i P_t  f\|_{L^2(U)} \\
&\leq&  \sqrt{2}(\lambda_B \inf_{B} \rho)^{-1/2}  \|f\|^{1/2}_{L^{\infty}(\R^d, \widehat{\mu})} \|\overline{L} f\|^{1/2}_{L^1(\R^d,\widehat{\mu})}<\infty.
\end{eqnarray*}
Thus $u^i_f \in L^{2, \infty}(U \times (0,T))$.
Now let $\varphi_1 \in C_0^{\infty}(U)$ and $\varphi_2 \in C_0^{\infty}((0,T))$. Then
\begin{eqnarray}
\iint_{U \times (0,T)} u_f \cdot \partial_i(\varphi_1 \varphi_2) dx dt &=& \int_0^T \big(\int_U P_t f \cdot \partial_i \varphi_1 dx \big) \varphi_2 dt  \nonumber \\
&=& \int_0^T -\big( \int_U \partial_i P_t f  \cdot \varphi_1 dx \big) \varphi_2 dt \nonumber \\
&=& -\iint_U u_f^i  \cdot \varphi_1 \varphi_2 dxdt.   \label{pokpoin3}
\end{eqnarray}
By Lemma \ref{stoneweier},  \eqref{pokpoin3} extends to $\varphi\in C_0^{\infty}(U\times (0,T))$. Consequently $\partial_i u_f = u^i_f \in L^{2,\infty}(U \times (0,T))$.\\
Now define the map
$$
T_{\cdot}L_2 f: [0,T] \rightarrow L^2(U), \; \; \, t \mapsto T_t L_2 f, \quad T_{0}:= id.
$$
Since
\begin{eqnarray*}
\|T_t L_2 f- T_{t_0} L_2 f\|_{L^2(U)} \leq (\inf_{U} \rho \psi)^{-1/2} \|T_t L_2 f- T_{t_0} L_2 f\|_{L^2(\R^d, \widehat{\mu})},
\end{eqnarray*}
using the $L^2(\R^d, \widehat{\mu})$-strong continuity of $(T_t)_{t>0}$ and \cite[Theorem, p91]{Nev65}, there exists a Borel measurable function $u^0_f$ on $U \times (0,T)$ such that for each $t \in (0,T)$ it holds
$$
u^0_f(\cdot, t) = T_t L_2 f \in L^2(U).
$$
Using the $L^2(\R^d, \widehat{\mu})$-contraction property of $(T_t)_{t >0}$,
\begin{eqnarray*}
\|u^0_f \|_{L^{2, \infty}(U \times (0,T))} &=& \sup_{t \in (0,T)} \| T_t L_2 f\|_{L^2(U)} \\
&\leq&  ( \inf_{U} \rho \psi)^{-1/2}  \|L_2 f\|_{L^2(\R^d, \widehat{\mu})} <\infty
\end{eqnarray*}and so $u^0_f \in L^{2, \infty}(U \times (0,T))$. Next, observe that 
\begin{eqnarray*}
\iint_{U \times (0,T)} u_f \cdot \partial_t(\varphi_1 \varphi_2) dx dt &=& \int_0^T \big(\int_U T_t f\cdot   \varphi_1 dx \big) \partial_t\varphi_2 dt \\
&=& \int_0^T -\big( \int_U T_t L_2 f  \cdot \varphi_1 dx \big) \varphi_2 dt,   \\
&=& -\iint_U u_f^0 \cdot\varphi_1 \varphi_2 dxdt.
\end{eqnarray*}
Then, using Lemma \ref{stoneweier} in the same way as above again we obtain $\partial_t u_f = u^0_f \in L^{2, \infty}(U \times (0,T))$. \\
Now assume {\bf (C3)} and let $f \in D(L_{q_0})$, $t>0$, $t_0 \geq 0$.  Then   $P_{t_0} f, P_{t_0} f \in D(L_{q_0})$. Applying Proposition \ref{feokokoe1}, $ P_{t_0} f, P_{t_0} f \in H_{loc}^{2,d+1}(\R^d)$ and for each $ 1 \leq i, j \leq d$, it holds by linearity
\begin{eqnarray*}
 \| \partial_i \partial_j P_t f - \partial_i \partial_j P_{t_0}f \|_{L^{d+1}(U)}  &\leq& \|P_{t} f - P_{t_0} f \|_{H^{2,d+1}(U)} \leq C \| P_t f - P_{t_0} f  \|_{D(L_{q_0})} \nonumber \\
&=& C(\|T_t f- T_{t_0} f \|_{L^{q_0}(\R^d, \widehat{\mu})}  + \|T_t L_{q_0} f - T_{t_0} L_{q_0} f \|_{L^{q_0}(\R^d, \widehat{\mu})}\big ), \label{eorijgoijoi}
\end{eqnarray*}
where $C>0$ is a constant independent of $f$. Similarly
\begin{eqnarray*}
\|\partial_i \partial_j P_{t_0}f \|_{L^{d+1}(U)}  &\leq & C \big(\|T_{t_0} f \|_{L^{q_0}(\R^d, \widehat{\mu})}  + \|T_{t_0} L_{q_0} f \|_{L^{q_0}(\R^d, \widehat{\mu})}\big ).
\end{eqnarray*}
Thus we can define the map
$$
\partial_i \partial_j P_{\cdot} f: [0,T] \rightarrow L^{d+1}(U), \; \;  t \mapsto \partial_i  \partial_j P_{t} f
$$
which is continuous from $[0,T]$ to the $L^{d+1}(U)$. Hence by \cite[Theorem, p91]{Nev65}, there exists a Borel measurable function $u^{ij}_f$ on $U \times (0,T)$ such that for each $t \in (0,T)$,  it holds
$$
u^{ij}_f(\cdot, t) = \partial_i  \partial_j P_t f.
$$
Using Proposition \ref{feokokoe1} and the  $L^{q_0}(\R^d, \mu)$-contraction property of $(T_t)_{t>0}$, 
\begin{eqnarray*}
\|u_f^{ij}\|_{L^{d+1, \infty}(U \times (0,T))}  &\leq& \sup_{t \in (0,T)} \|\partial_i \partial_j P_t f \|_{L^{d+1}(U )} \nonumber \\
&\leq&  \sup_{t \in (0,T)}  \|P_t f\|_{H^{2,d+1}(U)} \leq  \sup_{t \in (0,T)} C \| P_t f \|_{D(L_{q_0})}\nonumber  \nonumber \\
&=& \sup_{t \in (0,T)}   C \left( \|T_t f\|_{L^{q_0}(\R^d, \widehat{\mu})}+ \|T_t L_{q_0} f \|_{L^{q_0}(\R^d, \widehat{\mu})} \right) \nonumber \\
&\leq&  C \|f\|_{D(L_{q_0})}, \label{foooekow}
\end{eqnarray*}
where $C>0$ is a constant which is independent of $f$. Thus $u_f^{ij} \in L^{d+1, \infty}(U \times (0,T))$.
Now analoguously to \eqref{pokpoin3} and to the arguments that follow \eqref{pokpoin3}, we finally get
$$
\partial_i \partial_j u_f = u^{ij}_f \in L^{d+1, \infty}(U \times (0,T)).
$$
\end{proof}

\noindent
The following theorem is a result about existence of regular solutions to the Cauchy problem for Kolmogorov's equation with degenerate diffusion coefficient. It is a result in the analysis of partial differential equations and plays a crucial role in the derivation of  well-posedness below.
\begin{theorem} \label{fvodkoko}
Assume  {\bf (C3)} and  let $f \in C_0^{2}(\R^d) \subset D(L_s)$. Let $(P_t)_{t\ge 0}$ be defined as right before and in Lemma \ref{contiiokc} and set $u_f(x,t) := P_t f(x)$, $(x,t) \in \mathbb{R}^d \times [0, \infty)$. Then 
$$
u_f \in C_b\left(\R^d \times [0, \infty) \right) \cap \big( \displaystyle \bigcap_{r>0} W^{2,1}_{d+1, \infty} (B_r \times (0,\infty)) \big) 
$$ 
satisfying $u_f(x, 0) = f(x)$ for all $x \in \R^d$ such that 
$$
\partial_t u_f \in  L^{\infty}(\R^d \times (0,\infty)), \; \partial_i u_f \in \displaystyle \bigcap_{r>0} L^{\infty}(B_{r} \times (0,\infty)) \; \text{ for all } 1\leq i \leq d,
$$
and
$$
\partial_t u_f = \frac12 \text{\rm trace}\big(\frac{1}{\psi}A \nabla^2 u_f\big)+ \langle \mathbf{G}, \nabla u_f \rangle \, \; \text{ a.e. on } \R^d \times (0, \infty).
$$
\end{theorem}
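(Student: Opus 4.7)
First, I would reduce the theorem to the setting of Lemma \ref{pjpjwoeij} by placing $f$ in the appropriate semigroup domains. Because $f \in C_0^2(\R^d)$ and under {\bf (C3)} the coefficients $\tfrac{1}{\psi}A$ and $\mathbf{G}$ are (essentially) locally bounded, $Lf = \tfrac12 \mathrm{trace}(\tfrac{1}{\psi}A \nabla^2 f) + \langle \mathbf{G}, \nabla f\rangle$ belongs to $L^\infty(\R^d)$ with compact support. Approximating $f$ in the $C^2$-norm by a sequence $(f_n) \subset C_0^\infty(\R^d) \subset D(\overline L)$ with common compact support, one gets $f_n \to f$ and $Lf_n \to Lf$ uniformly and hence in $L^r(\R^d, \widehat{\mu})$ for every $r \in [1, \infty)$. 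Closedness of each generator $L_r$ then yields $f \in D(\overline L)_b \cap D(L_2) \cap D(L_s) \cap D(L_{q_0})$ with $L_r f = Lf$. Lemma \ref{contiiokc} together with sub-Markovianity of $(T_t)$ on $L^\infty$ immediately produces $u_f := P_\cdot f \in C_b(\R^d \times [0, \infty))$ with $u_f(\cdot, 0) = f$, and Lemma \ref{pjpjwoeij} supplies, for every ball $U$ and every $T > 0$, the weak derivatives $\partial_t u_f, \partial_i u_f \in L^{2, \infty}(U \times (0,T))$ and $\partial_i \partial_j u_f \in L^{d+1, \infty}(U \times (0,T))$, together with the pointwise-in-$t$ identifications $\partial_t u_f(\cdot, t) = T_t L_{q_0} f$ and $\partial_i\partial_j u_f(\cdot, t) = \partial_i\partial_j P_t f$.

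Next I would upgrade these integrabilities into the claimed $W^{2,1}_{d+1,\infty}$-membership and $L^\infty$-bounds via three uniform-in-$t$ estimates. (i) The $L^{q_0}$-contraction $\|P_t f\|_{D(L_{q_0})} \le \|f\|_{D(L_{q_0})}$ combined with Proposition \ref{feokokoe1} gives $\|P_t f\|_{H^{2, d+1}(B)} \le C \|f\|_{D(L_{q_0})}$ uniformly in $t \ge 0$, whence $\partial_i\partial_j u_f \in L^{d+1, \infty}(B \times (0,\infty))$. (ii) Since $d+1 > d$, the Morrey embedding $H^{2, d+1}(B) \hookrightarrow C^{1, 1/(d+1)}(\overline{B'})$ for $B' \Subset B$ forces $\nabla P_t f$ to be uniformly bounded on $B'$ in $t$, hence $\partial_i u_f \in L^\infty(B' \times (0,\infty))$ for every $B'$. (iii) For the time derivative, since $Lf \in L^\infty(\R^d, \widehat{\mu})$ and $(T_t)_{t>0}$ extends to a sub-Markovian semigroup on $L^\infty(\R^d, \widehat{\mu})$ by Theorem \ref{helholmop}, one gets $\|\partial_t u_f(\cdot, t)\|_{L^\infty} = \|T_t Lf\|_{L^\infty} \le \|Lf\|_{L^\infty}$ uniformly in $t$, so $\partial_t u_f \in L^\infty(\R^d \times (0,\infty))$, which is the stronger, global sup bound.

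Finally I would derive the pointwise non-divergence PDE. Specializing the weak identity obtained in the proof of Theorem \ref{1-3reg3} to $C \equiv 0$ under {\bf (C3)} and using $f \in D(\overline L)_b \cap D(L_s) \cap D(L_2)$, $u_f$ satisfies
\[
\iint_{\R^d \times (0,T)} \bigl(\bigl\langle \tfrac{1}{2} \rho A \nabla u_f, \nabla \varphi \bigr\rangle + \langle \widehat F, \nabla u_f\rangle \varphi\bigr)\, dx\,dt = \iint_{\R^d \times (0,T)} u_f\, \partial_t \varphi \cdot \rho \psi \, dx\,dt
\]
for every $\varphi \in C_0^\infty(\R^d \times (0,T))$ and every $T > 0$, where $\widehat F = -\rho \psi \mathbf{H} + \tfrac{1}{2} A \nabla \rho$. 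Since step (i) gives $u_f(\cdot, t) \in H^{2, d+1}_{loc}$ and $\mathrm{div}\, A \in L^{d+1}_{loc}$ under {\bf (C3)}, the expansion $\mathrm{div}(\rho A \nabla u_f) = \rho\, \mathrm{trace}(A \nabla^2 u_f) + \langle A \nabla \rho, \nabla u_f\rangle + \rho \langle \mathrm{div}\,A, \nabla u_f\rangle$ holds a.e. Integrating by parts in $x$ in the first term on the left, and in $t$ on the right (using the $L^{2,\infty}$ regularity of $\partial_t u_f$ from Lemma \ref{pjpjwoeij}), the $\tfrac12 A\nabla \rho$-contributions cancel and one reaches the pointwise identity $\rho \psi \partial_t u_f = \tfrac{\rho}{2} \mathrm{trace}(A \nabla^2 u_f) + \tfrac{\rho}{2} \langle \mathrm{div}\, A, \nabla u_f\rangle + \rho \psi \langle \mathbf{H}, \nabla u_f\rangle$ a.e.; dividing by $\rho\psi > 0$ and invoking $\mathbf{G} = \tfrac{1}{2\psi} \mathrm{div}\, A + \mathbf{H}$ delivers the stated PDE. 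The main obstacle is precisely this passage from weak divergence form (all that the generalized Dirichlet form directly supplies) to the strong non-divergence form PDE; it hinges on the improved $H^{2, d+1}_{loc}$-regularity of Proposition \ref{feokokoe1} (and ultimately Theorem \ref{mainregulthm}), which is only accessible thanks to the $VMO_{loc}$-assumption on $A$ together with $\mathrm{div}\,A \in L^{d+1}_{loc}$ imposed in {\bf (C3)}.
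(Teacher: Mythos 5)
Your proposal is correct and follows essentially the same route as the paper: Lemma \ref{contiiokc} for $u_f\in C_b(\R^d\times[0,\infty))$, Lemma \ref{pjpjwoeij} for the weak derivatives and their identifications, Proposition \ref{feokokoe1} together with the $L^{q_0}(\R^d,\widehat{\mu})$-contraction, Morrey's embedding and the sub-Markovian bound $\|T_tLf\|_{L^\infty}\le\|Lf\|_{L^\infty}$ for the uniform-in-$t$ estimates, and finally the weak identity from the proof of Theorem \ref{1-3reg3} converted into non-divergence form by integration by parts using the $H^{2,d+1}_{loc}$-regularity and $\mathrm{div}A\in L^{d+1}_{loc}$, with $\mathbf{G}=\frac{1}{2\psi}\mathrm{div}A+\mathbf{H}$. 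Your explicit $C^2$-approximation placing $f\in C_0^2(\R^d)$ into $D(\overline{L})_b\cap D(L_2)\cap D(L_s)\cap D(L_{q_0})$ is a slightly more careful version of the step the paper handles by working directly with $f\in C_0^\infty(\R^d)$ and invoking the local boundedness of $\frac{1}{\psi}A$ and $\mathbf{G}$; otherwise the two arguments coincide.
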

\begin{proof}
Let $f \in C_0^{\infty}(\R^d)$. Then $f \in D(L_s)$ and by  Lemma \ref{contiiokc}, we get $u_f \in C_b(\R^d \times [0, \infty))$. Moreover, $u_f(x, 0) = f(x)$ for all $x \in \R^d$. Since $\frac{1}{\psi}a_{ij}\in L^{\infty}_{loc}(\R^d)$, $1\le i,j\le d$,  and $\mathbf{G} \in L^{\infty}_{loc}(\R^d, \R^d)$, it holds $f \in D(L_{q_0})$, so that $P_t f \in D(L_{q_0})$ for any $t \geq 0$. By Lemma \ref{pjpjwoeij}, for each $t>0$ it holds $\partial_t u_f(\cdot ,t) = T_t L_s f = T_t L f$\; $\widehat{\mu}$-a.e. on $\R^d$. Note that for each $t>0$, using  the sub-Markovian property,
\begin{eqnarray*}
\| \partial_t u_f(\cdot, t) \|_{L^{\infty}(\R^d)} &=& \|T_t L f \|_{L^{\infty}(\R^d, \widehat{\mu})}\\
&\leq&  \|Lf\|_{L^{\infty}(\R^d, \widehat{\mu})},
\end{eqnarray*}
hence $\partial_t u_f \in L^{\infty}(\R^d \times (0, \infty))$. By Lemma \ref{pjpjwoeij}, for $1 \leq i,j \leq d$, $t>0$, \;
$\partial_i u_f(\cdot, t)  = \partial_i P_t f$,\, $\partial_i \partial_j u_f (\cdot, t)= \partial_i \partial_j P_t f$ \,$\widehat{\mu}$-a.e. on $\R^d$. Using Proposition \ref{feokokoe1} and the $L^{q_0}(\R^d, \widehat{\mu})$-contraction property of $(T_t)_{t>0}$, for any $R>0$ and for each $1 \leq i,j \leq d$, $t>0$, it holds
\begin{eqnarray*}
 \|\partial_i \partial_j u_f(\cdot,t ) \|_{L^{d+1}(B_R )}&\leq& \|P_t f\|_{H^{2,d+1}(B_R)} \leq C \|P_t f \|_{D(L_{q_0})} \nonumber \\
&=& C \left( \|T_t f\|_{L^{q_0}(\R^d, \widehat{\mu})}+ \|T_t L_{q_0} f \|_{L^{q_0}(\R^d, \widehat{\mu})} \right) \nonumber \\
&\leq& C \|f\|_{D(L_{q_0})}, \label{foooekowwev}
\end{eqnarray*}
where $C>0$ is a constant as in Proposition \ref{feokokoe1} and independent of $f$ and $t>0$. By the above and Morrey's inequality, there exists a constant $C_{R,d}>0$, independent of $f$ and $t>0$, such that for each $t>0$, $1 \leq i \leq d$, 
\begin{eqnarray*}
\|\partial_i u_f(\cdot, t)\|_{L^{\infty}(B_R)}&\leq& \|\partial_i P_t f\|_{L^{\infty}(B_R)} \\
&\leq& C_{R,d} \|P_t f\|_{H^{2,d+1}(B_R)}  \\
&\leq& C_{R,d} C \|f\|_{D(L_{q_0})}.
\end{eqnarray*}
Thus, $u_f \in  W^{2,1}_{d+1, \infty} (B_R \times (0,\infty))$ and $\partial_t u_f, \partial_i u_f \in  L^{\infty}(B_{R} \times (0,\infty))$ for all $1 \leq i \leq d$.  By \eqref{maindivkjbie-3}, it holds 
\begin{eqnarray*}
&&\iint_{\R^d \times (0,\infty)}\big ( \langle \frac{1}{2} \rho A  \nabla u_f,  \nabla \varphi  \rangle-   \langle  \rho \psi \bold{B}, \nabla u_f  \rangle  \varphi\big )  \;dx dt \nonumber  \\
&&\qquad  =\iint_{\R^d \times (0,\infty)} \big ( -\partial_t u_f \cdot \varphi  \rho \psi \big )dx dt \quad \text{ for all } \varphi \in C_0^{\infty}(\R^d \times (0,\infty)). 
\end{eqnarray*}
Using integration by parts, we obtain
\begin{eqnarray*}
&&-\iint_{\R^d \times (0,\infty)} \big(\, \frac12 \text{trace}\big( \frac{1}{\psi} A \nabla^2 u_f\big) +
\big \langle \beta^{\rho, A, \psi}+ \bold{B}, \nabla u_f \big  \rangle \big) \varphi  \, d \widehat{\mu}\, dt \nonumber  \\
&&\qquad  =\iint_{\R^d \times (0,\infty)} -\partial_t u_f \cdot \varphi \, d\widehat{\mu}\, dt \quad \text{ for all } \varphi \in C_0^{\infty}(\R^d \times (0,\infty)). 
\end{eqnarray*}
Therefore, since $\bold{G} = \beta^{\rho, A, \psi} + \bold{B}$ (cf. Theorem \ref{helholmop} ), we obtain that 
$$
\partial_t u_f = \frac12 \text{trace}\big( \frac{1}{\psi} A \nabla^2 u_f\big)+ \langle \mathbf{G}, \nabla u_f \rangle \,\, \;\; \text{ a.e. on } \R^d \times (0, \infty).  \vspace{-0.5em}
$$
\end{proof}

\begin{defn}\label{weaksolution}
Suppose {\bf (C2)} holds (for instance if {\bf (C3)} holds). 
Assume that for each open ball $B$ there exist constants $\lambda_B, \Lambda_B>0$ such that \eqref{uniellip} holds. Assume that $\frac{1}{\psi}(x)\ge 0$ for any $x\in \R^d$. Fix $y=(y_1, \ldots, y_d) \in \mathbb{R}^d$ and let
$$
\widetilde{\M}_y=  
\big ((\widetilde{\Omega},  (\widetilde{\F}_t)_{t \ge 0},\widetilde{\F}, \widetilde{\P}_y), (\widetilde{X}_t)_{t \ge 0}, (\widetilde{W}_t)_{t \ge 0}\big )
$$
be such that 
\begin{itemize}
\item[(i)] $(\widetilde{\Omega},  (\widetilde{\F}_t)_{t \ge 0},\widetilde{\F}, \widetilde{\P}_y)$ is a filtered probability space, satisfying the usual conditions,
\item[(ii)] $(\widetilde{X}_t=(\widetilde{X}_t^1,\ldots, \widetilde{X}_t^d))_{t \ge 0}$ is an $(\widetilde{\F}_t)_{t \ge 0}$-adapted continuous $\R^d$-valued stochastic process, 
\item[(iii)] $(\widetilde{W}_t=(\widetilde{W}^1_t,\ldots,\widetilde{W}^d_t))_{t \ge 0}$ is a standard $d$-dimensional  $((\widetilde{\F}_t)_{t \ge 0},\widetilde{\P}_y)$-Brownian motion starting from zero,
\item[(iv)]  for the Borel measurable functions $g_i, \frac{1}{\psi}$, $\widehat{\sigma}_{ij}= \sqrt{\frac{1}{\psi}}\sigma_{ij}$, $1 \leq i,j \leq d$, which are supposed to be chosen  (pointwisely) locally bounded, where $\sigma = (\sigma_{ij})_{1 \leq i,j \leq d}$ is as in Theorem \ref{weakexistence4},  it holds 
$$
\widetilde{\P}_{y}\big (\int_0^t (\widehat{\sigma}_{ij}^2(\widetilde{X}_s)+| g_i(\widetilde{X}_s)|) ds<\infty\big )=1,\ 1\le i,j\le  d,\  t\in [0,\infty),
$$
and for any $1\le i\le d$,
\begin{equation*}
\widetilde{X}_t^i = y_i+ \sum_{j=1}^d \int_0^t \widehat{\sigma}_{ij} (\widetilde{X}_s) \, d\widetilde{W}_s^j +   \int^{t}_{0}   g_i(\widetilde{X}_s) \, ds,
\quad 0\leq t < \infty, \quad \; \text{$\widetilde{\P}_{y}$-a.s.},
\end{equation*}
in short 
\begin{equation} \label{fepojpow}
\widetilde{X}_t = y + \int_0^t \widehat{\sigma}(\widetilde{X}_s) d\widetilde{W}_s + \int_0^t \mathbf{G}(\widetilde{X}_s) ds, \quad 0\leq t < \infty, \quad \; \text{$\widetilde{\P}_{y}$-a.s.},
\end{equation}
where $\widehat{\sigma} = (\widehat{\sigma}_{ij})_{1 \leq i,j \leq d}$.
\end{itemize}
Then $\widetilde{\M}_y$ is called a {\bf weak solution} to \eqref{fepojpow} starting from  $y$.
Note that in this case, $(t, \widetilde{\omega}) \mapsto \widehat{\sigma}(\widetilde{X}_t(\widetilde{\omega}))$ and  $(t, \widetilde{\omega}) \mapsto \mathbf{G}(\widetilde{X}_t(\widetilde{\omega}))$ are (since we assume that the coefficients are chosen Borel measurable and locally bounded) progressively measurable with respect to $(\widetilde{\F}_t)_{t \ge 0}$ and that
$$
\widetilde{D}_R:=\inf \{ t \geq 0 : \widetilde{X}_t \in \R^d \setminus B_R  \}\nearrow \infty \quad \widetilde{\P}_{y}\text{-a.s. }
$$ 
\end{defn}
\begin{remark}\label{remweaksol}
(i) In Definition \ref{weaksolution}  the coefficients of the SDE \eqref{fepojpow} are a priori assumed to be pointwisely locally bounded. This is in particular necessary for the derivation of Theorem \ref{krylovtype} below and all other statements that depend on this theorem. In fact, Theorem \ref{krylovtype} is derived on basis of the classical Krylov type estimate \cite[2. Theorem (2), p. 52]{Kry} for which pointwise local boundedness is needed.\\
(ii) If $\M$ as in Theorem \ref{existhunt4} is non-explosive, then by Theorem \ref{weakexistence4}(ii) there exists a weak solution $\M_y=  \big ((\Omega,  (\F_t)_{t \ge 0},\F, \P_y), (X_t)_{t\ge 0}, (W_t)_{t \ge 0}\big )$ to \eqref{fepojpow} starting from  $y$ for any $y\in \R^d$.     \\
(iii) If $\widetilde{\M}_y$  is a weak solution to \eqref{fepojpow} starting from  $y$, then applying It\^o's formula, we can see that $(\widetilde{X}_t)_{t \ge 0}$ defined on $\big (\widetilde{\Omega},  (\widetilde{\F}_t)_{t \ge 0}, \widetilde{\F}, \widetilde{\P}_y\big )$ solves the {\bf $(L,C_0^{2}(\R^d))$-martingale problem with initial distribution $\delta_y$} (or one also says with initial condition $y$), i.e.
$$
M_t^f(\widetilde{X}):=f(\widetilde{X}_t)-f(y)-\int_0^t Lf(\widetilde{X}_r)dr,\quad t\ge 0,
$$
is an $\big (\widetilde{\Omega},  (\widetilde{\F}_t)_{t \ge 0},\widetilde{\P}_y)\big )$-martingale, with $\widetilde{\P}_y(\widetilde{X}_0=y)=1$. For each $f\in C_0^{2}(\R^d)$, the martingale property is equivalent to 
$$
\widetilde{\E}_y\big [\big (M_t^f(\widetilde{X})-M_s^f(\widetilde{X})\big )\Pi_{i=1}^n h_i(\widetilde{X}_{t_i})\big ]=0,
$$
where $0\le t_1<\ldots <t_n\le s<t$, $h_i\in \mathcal{B}_b(\R^d)$, $1\le i\le n$, $n\in \N$ and therefore only depends on the finite dimensional distributions of $(\widetilde{X}_t)_{t \ge 0}$. In addition, the paths of $(\widetilde{X}_t)_{t \ge 0}$ are obviously continuous. It follows, that when considering the martingale problem, we can always consider a canonical realization $\bar{X}_t(\bar\omega)=\bar \omega(t)$, $t\ge 0$, $\bar\omega\in \bar\Omega= C([0,\infty),\R^d)$, $\bar{\F}_t:=\sigma(\bar{X}_s\,|\, s\in [0,t])$, $\bar{\F}:=\sigma(\bar{X}_s\,|\, s\in [0,\infty))$, $\bar{\P}_y$, of $(\widetilde{X}_t)_{t \ge 0}$, which has the same finite dimensional distributions.
\end{remark}
\begin{lemma}\label{condforpsitohold} 
Assume {\bf (C3)}. Let $y \in \mathbb{R}^d$ and $\widetilde{\M}_y$ be a weak solution to \eqref{fepojpow} starting from  $y$. 
For each $\widetilde{\omega} \in \widetilde{\Omega}$,
let
$$
Z^{\widetilde{\Omega},\widetilde{X}}(\widetilde{\omega}):=\big\{ t \ge 0: \sqrt{\frac{1}{\psi}} (\widetilde{X}_t(\widetilde{\omega})) = 0  \big\}
$$
and
$$
\Lambda(Z^{\widetilde{\Omega},\widetilde{X}}):=  \big\{ \widetilde{\omega} \in \widetilde{\Omega} : dt\big (Z^{\widetilde{\Omega},\widetilde{X}}(\widetilde{\omega})\big ) =0 \big \}.
$$
Consider the following conditions:
\begin{itemize}
\item[(a)] $\widetilde{\P}_y(\Lambda(Z^{\widetilde{\Omega},\widetilde{X}})) =1$.
\item[(b)] (\ref{zerospen}) holds, i.e.
\begin{equation*}
\int_0^{\infty} 1_{\big \{ \sqrt{ \frac{1}{\psi} }=0\big \}}(\widetilde{X}_s)ds=0, \qquad \widetilde{\P}_y\text{-a.s. }
\end{equation*}
\item[(c)]
Let $\bar{\psi}$ be a function on $\mathbb{R}^d$ defined by
$$
\bar{\psi}(x) = \frac{1}{\frac{1}{\psi}(x)} \quad \text{ if \, $\frac{1}{\psi}(x) \in (0, \infty)$} \qquad \text{ and } \quad \bar{\psi}(x) = \infty \quad \text{if \,$\frac{1}{\psi}(x)=0$}.
$$
For each $n\in\N$ and $T>0$,
\begin{equation} \label{kryestintg}
 \widetilde{\E}_y \Big[\int_0^T 1_{B_n}\bar{\psi}(\widetilde{X}_s)ds \Big]<\infty.
\end{equation}
\end{itemize}
Then (a) and (b) are equivalent. Moreover, (c) implies (b).
\end{lemma}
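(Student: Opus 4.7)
The plan is to handle the equivalence $(a) \Leftrightarrow (b)$ purely by unraveling definitions, and then derive $(b)$ from $(c)$ by exploiting the fact that $\bar\psi \equiv \infty$ precisely on the set where $\sqrt{1/\psi}$ vanishes.

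For $(a) \Leftrightarrow (b)$, I would first observe that since $(\widetilde X_t)_{t\ge 0}$ has continuous paths and $\{\sqrt{1/\psi}=0\}\in\mathcal{B}(\R^d)$, the set $Z^{\widetilde\Omega,\widetilde X}(\widetilde\omega)$ is a Borel subset of $[0,\infty)$ for every $\widetilde\omega$, and the map $(t,\widetilde\omega)\mapsto 1_{\{\sqrt{1/\psi}=0\}}(\widetilde X_t(\widetilde\omega))$ is jointly measurable by progressive measurability of $\widetilde X$. By the very definition of Lebesgue measure,
\[
dt\bigl(Z^{\widetilde\Omega,\widetilde X}(\widetilde\omega)\bigr)=\int_0^{\infty} 1_{\{\sqrt{1/\psi}=0\}}(\widetilde X_s(\widetilde\omega))\,ds,
\]
so $\Lambda(Z^{\widetilde\Omega,\widetilde X})$ coincides (as an event) with $\{\int_0^{\infty} 1_{\{\sqrt{1/\psi}=0\}}(\widetilde X_s)\,ds = 0\}$. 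Thus $(a)$ and $(b)$ literally express the same $\widetilde\P_y$-null set condition, and the equivalence follows immediately. The main (minor) subtlety here is merely verifying joint measurability so that Tonelli applies; this is standard for continuous adapted processes.

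For $(c)\Rightarrow(b)$, the point is that $\bar\psi(x)\in[0,\infty)$ when $\tfrac{1}{\psi}(x)>0$ while $\bar\psi(x)=\infty$ exactly when $\tfrac{1}{\psi}(x)=0$, equivalently when $\sqrt{1/\psi}(x)=0$. Fix $n\in\N$ and $T>0$. From $(c)$ the random variable $\int_0^T 1_{B_n}\bar\psi(\widetilde X_s)\,ds$ is integrable hence $\widetilde\P_y$-a.s.\ finite. But on the set $\{s\in[0,T]:\widetilde X_s\in B_n\cap\{\sqrt{1/\psi}=0\}\}$ the integrand equals $+\infty$, so this time set must have Lebesgue measure zero $\widetilde\P_y$-a.s., i.e.
\[
\int_0^T 1_{B_n\cap\{\sqrt{1/\psi}=0\}}(\widetilde X_s)\,ds=0\quad\widetilde\P_y\text{-a.s.}
\]
Letting $n\to\infty$ (so $B_n\uparrow\R^d$) and then $T\to\infty$ along a countable sequence, by monotone convergence of the integrals pathwise, one obtains $\int_0^{\infty}1_{\{\sqrt{1/\psi}=0\}}(\widetilde X_s)\,ds=0$ $\widetilde\P_y$-a.s., which is $(b)$.

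Neither direction presents a real obstacle: the argument rests only on joint measurability (to invoke Tonelli for the equivalence), and the observation that an a.s.\ finite integral of a $[0,\infty]$-valued function cannot charge the locus where the integrand is $+\infty$ with positive Lebesgue measure. No regularity from $\mathbf{(C3)}$ beyond the measurability and pointwise definition of $1/\psi$ is used in this lemma; the strength of $\mathbf{(C3)}$ enters only implicitly through the ambient setup guaranteeing that weak solutions are well-defined.
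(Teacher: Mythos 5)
Your proof is correct and follows essentially the same route as the paper: the equivalence of (a) and (b) is the pointwise identity between $dt\big(Z^{\widetilde{\Omega},\widetilde{X}}(\widetilde{\omega})\big)$ and the pathwise integral, and (c) $\Rightarrow$ (b) uses that an a.s.\ finite integral cannot charge the set where the integrand is $+\infty$. The only (immaterial) difference is that you remove the cutoff $1_{B_n}$ by a monotone limit in $n$, while the paper drops it directly at fixed $T$ before letting $T\to\infty$.
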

\begin{proof}
Note that \eqref{zerospen} holds, if and only if for $\widetilde{\P}_y$-a.e. $\widetilde{\omega} \in \widetilde{\Omega}$
$$
\widetilde{X}_t(\widetilde{\omega}) \notin \big\{x \in \mathbb{R}^d: \sqrt{\frac{1}{\psi}}(x)  = 0 \big\} \quad \text{ for a.e. $t \in [0, \infty)$}.
$$
Thus, (a) and (b) are equivalent. In order to show that (c) implies (b), let  $n\in\N$ and $T>0$ and assume that 
\eqref{kryestintg} holds. Then, for $\widetilde{\P}_y$-a.e. $\widetilde{\omega} \in \widetilde{\Omega}$
$$
\int_0^T 1_{B_n}\bar{\psi}(\widetilde{X}_s (\widetilde{\omega}))ds <\infty, 
$$
and hence $1_{B_n} \bar{\psi}(\widetilde{X}_t (\widetilde{\omega}))<\infty$ for a.e. $t \in [0, T]$, so that 
 $\bar{\psi}(\widetilde{X}_t (\widetilde{\omega}))<\infty$  for a.e. $t \in [0, T]$. Thus, for $\widetilde{\P}_y$-a.e. $\widetilde{\omega} \in \widetilde{\Omega}$
$$
dt\big( \big\{ t \in [0, T] : \sqrt{\frac{1}{\psi}} (\widetilde{X}_t(\widetilde{\omega}))=0 \big\}  \big)= 0
$$
and so 
$$
\int_0^{T} 1_{\big \{ \sqrt{ \frac{1}{\psi} }=0\big \}}(\widetilde{X}_s)ds=0, \qquad \widetilde{\P}_y\text{-a.s. }
$$
Letting $T \rightarrow \infty$, \eqref{zerospen} follows.
\end{proof}

\begin{theorem}[\bf Local Krylov type estimate] \label{krylovtype}
Assume {\bf (C3)}. Let $y \in \mathbb{R}^d$ and $\widetilde{\M}_y$ be a weak solution to \eqref{fepojpow} starting from  $y$ such that \eqref{zerospen} holds. Let $T>0$, $R>0$ and $f \in L^{d+1}(B_R \times (0,T))$ with $\psi f \in L^{d+1}(B_R \times (0,T))$. 
Then, there exists a constant $C>0$ which is independent of $f$ such that
$$
\widetilde{\E}_{y} \Big[ \int_0^{T\wedge \widetilde{D}_R} f(\widetilde{X}_s,s) ds \Big ] \leq C \| \psi f\|_{L^{d+1}(B_R \times (0,T))},
$$
where $\widetilde{\E}_{y}$ is the expectation with respect to $\widetilde{\P}_{y}$.
\end{theorem}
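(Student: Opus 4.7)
The plan is to apply the classical Krylov type estimate of \cite[Chap.~2, Thm.~(2), p.~52]{Kry} to the stopped process $(\widetilde{X}_{t\wedge \widetilde{D}_R})_{t\ge 0}$, and then to convert the resulting $(\det \widehat{A})^{1/d}$-weighted bound into the $\psi$-weighted bound of the statement using the algebraic identity $(\det \widehat{A})^{1/d}=(\det A)^{1/d}/\psi$, together with assumption \eqref{zerospen} and the local uniform ellipticity of $A$. Since $\widetilde{\E}_y\big[\int f\,ds\big]\le \widetilde{\E}_y\big[\int |f|\,ds\big]$ and $\|\psi|f|\|_{L^{d+1}}=\|\psi f\|_{L^{d+1}}$ (as $\psi>0$), I may assume $f\ge 0$ throughout.

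To invoke the classical Krylov estimate, which requires pointwise bounded coefficients (as emphasized in Remark \ref{remweaksol}(i)), I must first check that the coefficients of \eqref{fepojpow} are bounded on $B_R$. By assumption {\bf (C3)}, $\mathbf{G}\in L^{\infty}_{loc}(\R^d,\R^d)$ and $\frac{1}{\psi}\in L^{\infty}_{loc}(\R^d)$, while \eqref{uniellip} gives $\|A(x)\xi\|\le \Lambda_B\|\xi\|$ pointwise on $B$; hence a Borel measurable square root $\sigma$ of $A$ can be chosen locally bounded, so that both $\widehat{\sigma}=\sqrt{1/\psi}\,\sigma$ and $\mathbf{G}$ are bounded on $B_R$. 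Consequently, there exists a constant $N>0$, depending only on $d$, $T$, $R$, and the $L^{\infty}(B_R)$-bounds of $\widehat{\sigma}$ and $\mathbf{G}$, such that for every non-negative Borel function $h$ on $B_R\times(0,T)$,
\begin{equation*}
\widetilde{\E}_y\Big[\int_0^{T\wedge \widetilde{D}_R} h(\widetilde{X}_s,s)\,\bigl(\det \widehat{A}\bigr)^{1/d}(\widetilde{X}_s)\,ds\Big]\le N\,\|h\|_{L^{d+1}(B_R\times (0,T))}.
\end{equation*}

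I would then specialize $h:=f\,\psi\,(\det A)^{-1/d}\mathbf{1}_{B_R\times(0,T)}$, which is well-defined $dxdt$-a.e. since $\psi<\infty$ a.e. Noting that $\widehat{A}=\psi^{-1}A$ implies $(\det \widehat{A})^{1/d}=(\det A)^{1/d}/\psi$ a.e.\ on $\{1/\psi>0\}$, while both sides vanish on $\{1/\psi=0\}$, we get $h\cdot(\det \widehat{A})^{1/d}=f\,\mathbf{1}_{\{1/\psi>0\}}$ a.e.\ on $B_R\times(0,T)$. The local uniform ellipticity of $A$ yields $\det A\ge \lambda_{B_R}^{d}$ on $B_R$ and thus $(\det A)^{-1/d}\le \lambda_{B_R}^{-1}$, so $\|h\|_{L^{d+1}(B_R\times(0,T))}\le \lambda_{B_R}^{-1}\|\psi f\|_{L^{d+1}(B_R\times(0,T))}$. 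Finally, \eqref{zerospen} forces $\mathbf{1}_{\{1/\psi>0\}}(\widetilde{X}_s)=1$ for Lebesgue-a.e.\ $s$, $\widetilde{\P}_y$-a.s., so this indicator may be dropped from the expectation, and the theorem follows with $C=N/\lambda_{B_R}$.

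The main obstacle, and the very reason \eqref{zerospen} is built into the hypothesis, is the treatment of the degeneracy locus $\{1/\psi=0\}$: Krylov's weight $(\det \widehat{A})^{1/d}$ vanishes on this set, so without \eqref{zerospen} the raw estimate only controls the integral over times when $\widetilde{X}_s$ avoids $\{1/\psi=0\}$. Hypothesis \eqref{zerospen} is precisely what lets us remove this restriction and reach a full local Krylov bound. Apart from this bookkeeping, the argument reduces cleanly to the classical Krylov estimate via the $B_R$-boundedness of $\widehat{\sigma}$ and $\mathbf{G}$ guaranteed by {\bf (C3)}.
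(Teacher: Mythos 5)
Your overall strategy is the same as the paper's: stop the solution at $\widetilde{D}_R$, apply the classical Krylov estimate \cite[2. Theorem (2), p. 52]{Kry} to the process with coefficients bounded on $\overline{B}_R$, cancel the degeneracy weight against $\psi$, bound $\det A$ from below by $\lambda_{B_R}^d$, and dispose of the set $\{\frac{1}{\psi}=0\}$ via \eqref{zerospen}. One minor imprecision: the classical parabolic estimate carries the weight $\det(\widehat{A}/2)^{\frac{1}{d+1}}$ together with the drift discount $e^{-\int_0^t\|\mathbf{G}(\widetilde{X}_u)\|du}$; your $(\det\widehat{A})^{1/d}$-weighted inequality against the space-time $L^{d+1}$-norm is not that statement, though it does follow from it on $B_R$ because $(\det\widehat{A})^{\frac{1}{d}-\frac{1}{d+1}}$ and the exponential factor are bounded by constants depending only on the $L^{\infty}(B_R)$-bounds you allow $N$ to depend on; this derivation should be spelled out, but it is harmless.

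The genuine gap is in the substitution step. Your $h:=f\,\psi\,(\det A)^{-1/d}$ is built from the merely a.e.-defined $\psi$, so the identity $h\cdot(\det\widehat{A})^{1/d}=f\,\mathbf{1}_{\{\frac{1}{\psi}>0\}}$ holds only Lebesgue-a.e. (recall Definition \ref{basidefn}(i): $\psi\cdot\frac{1}{\psi}=1$ only a.e.). Passing from this a.e. identity to the needed comparison of the path expectations $\widetilde{\E}_y\big[\int_0^{T\wedge\widetilde{D}_R}\cdots(\widetilde{X}_s,s)\,ds\big]$ requires that $\widetilde{X}$ spend zero time in the exceptional Lebesgue-null set, which is not automatic for a degenerate SDE (the trivial solution of \eqref{generalizedGirsanov} spends all its time in a Lebesgue-null set), and in this paper the statement that such path integrals are independent of the chosen Borel version is itself a consequence of Theorem \ref{krylovtype} (Remark \ref{casemdandindepbor}(i)), so invoking it here would be circular. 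The paper avoids the issue by replacing $\psi$ with the pointwise defined $\phi$ ($\phi:=(\frac{1}{\psi})^{-1}$ where $\frac{1}{\psi}>0$, $\phi:=0$ where $\frac{1}{\psi}=0$), for which the cancellation $(\frac{1}{\psi})^{\frac{d}{d+1}}\phi^{\frac{d}{d+1}}=1$ is exact pointwise off $\{\frac{1}{\psi}=0\}$; then only the set $\{\frac{1}{\psi}=0\}$ remains, which is removed by \eqref{zerospen} through Lemma \ref{condforpsitohold}, while the right-hand norm is unchanged since $\phi=\psi$ a.e. Your argument can be repaired either by adopting this device, or by one further application of your own Krylov bound to the indicator of the exceptional null set $E$ (yielding $\mathbf{1}_E(\widetilde{X}_s,s)(\det\widehat{A})^{1/d}(\widetilde{X}_s)=0$ for a.e. $s$, $\widetilde{\P}_y$-a.s., hence, combined with \eqref{zerospen}, zero occupation time of $E$); as written, however, the step where the indicator is "dropped from the expectation" is unjustified.
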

\begin{proof}
Let $g \in L^{d+1}(B_R \times (0,T))$. Here all functions defined on $B_R \times (0,T)$ are extended to  
$\R^d \times (0, \infty)$ by setting them zero on $\R^d \times (0, \infty) \setminus B_R \times (0,T)$.
Let $Z^{\widetilde{\Omega},\widetilde{X}}$ be defined as in Lemma \ref{condforpsitohold}.  Then, by Lemma \ref{condforpsitohold},  $\widetilde{\P}_y(\Lambda(Z^{\widetilde{\Omega},\widetilde{X}})) =1$ where $\Lambda$ is defined as in Lemma \ref{condforpsitohold}.
Thus, using \cite[2. Theorem (2), p. 52]{Kry},  there exists a constant $C_1>0$ which is independent of $g$, such that
\begin{eqnarray}
&& \widetilde{\E}_y \Big[ \int_{(0, {T \wedge \widetilde{D}_R}) \setminus  Z^{\widetilde{\M}}} \big (2^{-\frac{d}{d+1}}  \text{det}(A)^{\frac{1}{d+1}}\cdot  \Big (\frac{1}{\psi}\Big )^{\frac{d}{d+1}}  g\big )(\widetilde{X}_s,s) ds \Big]  \nonumber \\
&=&\widetilde{\E}_y \Big[ \int_0^{T \wedge \widetilde{D}_R}  \big (2^{-\frac{d}{d+1}}  \text{det}(A)^{\frac{1}{d+1}}\cdot  \Big (\frac{1}{\psi}\big )^{\frac{d}{d+1}}  g \big  )(\widetilde{X}_s,s) ds \Big] \nonumber  \\
&\leq& e^{ T\,  \sup_{x\in B_{R}\|\mathbf{G}(x)\|} } \cdot \widetilde{\E}_y \Big[ \int_0^{T \wedge \widetilde{D}_R} e^{-\int_0^s \|\mathbf{G}(\widetilde{X}_u)\| du}\cdot \text{det}\big(\widehat{A}/2\big)^{\frac{1}{d+1}}g(\widetilde{X}_s,s) ds \Big]\nonumber   \\
&\leq& e^{ T \, \sup_{x\in B_{R}}\|\mathbf{G}(x)\|}   \cdot C_1 \|g\|_{L^{d+1}(B_R \times (0,\infty))}\nonumber   \\
&=& e^{ T \, \sup_{x\in B_{R}}\|\mathbf{G}(x)\|}   \cdot C_1 \|g\|_{L^{d+1}(B_R \times (0,T))}. \label{krylovestim}
\end{eqnarray}
Now define a measurable function $\phi: \mathbb{R}^d \rightarrow \mathbb{R}$ by
$$
\phi(x):= \frac{1}{\frac{1}{\psi}(x)} \quad \text{ if\, $\frac{1}{\psi}(x) \in (0, \infty)$ } \;\; \text{ and } \;\; \phi(x)=0 \quad \text{ if\, $\frac{1}{\psi}(x) =0$.}
$$
Then, $\phi=\psi$ a.e. on $\mathbb{R}^d$. Note that for each $\widetilde{\omega} \in \widetilde{\Omega}$,
$$
Z^{\widetilde{\Omega},\widetilde{X}}(\widetilde{\omega})=\big\{s\ge 0 :  \Big (\frac{1}{\psi}\Big )^{\frac{d}{d+1}} (\widetilde{X}_s(\widetilde{\omega}))\phi^{\frac{d}{d+1}} (\widetilde{X}_s(\widetilde{\omega}))\not= 1 \big \}.
$$
Since \eqref{zerospen} holds, we get from Lemma \ref{condforpsitohold} 
$$
\widetilde{\P}_y\big( \big \{ \widetilde{\omega} \in \widetilde{\Omega}:  dt (\{ s\ge 0\,: \Big (\frac{1}{\psi}\Big )^{\frac{d}{d+1}} (\widetilde{X}_s(\widetilde{\omega}) )\phi^{\frac{d}{d+1}} (\widetilde{X}_s(\widetilde{\omega}))\not= 1\})    =0 \big \} \big)=1.
$$
Let  $f$ be as in the statement of the theorem. Then, replacing $g$ in \eqref{krylovestim} above with $2^{\frac{d}{d+1}} \cdot \text{det}(A)^{-\frac{1}{d+1}} \phi^{\frac{d}{d+1}} f$, we get
\begin{eqnarray*}
&&\widetilde{\E}_{y} \Big[ \int_0^{T\wedge \widetilde{D}_R} f(\widetilde{X}_s,s) ds \Big] =\widetilde{\E}_{y} \Big[ \int_{(0, {T \wedge \widetilde{D}_R}) \setminus Z^{\widetilde{\M}}} f(\widetilde{X}_s,s) ds \Big] \\
&\leq& e^{ T\,  \sup_{x\in B_{R}}\|\mathbf{G}(x)\|}   \cdot C_1 \|2^{\frac{d}{d+1}} \cdot \text{det}(A)^{-\frac{1}{d+1}} \phi^{\frac{d}{d+1}} f\|_{L^{d+1}(B_R \times (0,T))} \\
&=& e^{ T\, \sup_{x\in B_{R}}\|\mathbf{G}(x)\|}    \cdot C_1 \Big \|2^{\frac{d}{d+1}} \cdot \text{det}(A)^{-\frac{1}{d+1}} \left(\frac{1}{\psi}\right)^{\frac{1}{d+1}}  \psi f\Big\|_{L^{d+1}(B_R \times (0,T))} \\
&\leq& \underbrace{2^{\frac{d}{d+1}}e^{ T \, \sup_{x\in B_{R}}\|\mathbf{G}(x)\|}    \cdot C_1 \Big\|\text{det}(A)^{-\frac{1}{d+1}} \left(\frac{1}{\psi}\right)^{\frac{1}{d+1}} \Big \|_{L^{\infty}(B_R)}}_{=:C}\, \|\psi f\|_{L^{d+1}(B_R \times (0,T))}.\\
\end{eqnarray*}
\end{proof}
\begin{remark}\label{casemdandindepbor}
(i) For any weak solution $\widetilde{\M}_y$ starting from $y$ such that \eqref{zerospen} holds, Theorem \ref{krylovtype} implies that  integrals of the form $\int_0^t f(\widetilde{X}_s,s)ds$ \,are, whenever they are well-defined, $\widetilde{\P}_y$-a.s. independent of the particular Borel version that is chosen for $f$. In particular, as an immediate consequence of Proposition \ref{resolkrylov}, the weak solution $\M_y$ constructed in Theorem \ref{weakexistence4}(ii) (cf. Remark \ref{remweaksol}(ii)) satisfies \eqref{kryestintg} for all $T>0$ and $n \in \mathbb{N}$, and \eqref{zerospen}, for any $y\in \R^d$. \\
(ii) In Definition \ref{weaksolution}, we can also consider the case $m\not=d$ as it is considered in Theorem \ref{weakexistence4}(i). Then Theorem \ref{krylovtype} can be shown exactly as it is for $m\ge d$ (here $m<d$ is excluded as the estimate in \cite[2. Theorem (2), p. 52]{Kry} is only developped for $m\ge d$). By this it is then also possible to obtain uniqueness in law as in Theorem \ref{weifjowej} in the case $m\ge d$.
\end{remark}

\begin{theorem}[\bf Local It\^{o}-formula]\label{fefere}
Assume {\bf (C3)}. Let $y \in \mathbb{R}^d$ and $\widetilde{\M}_y$ be a weak solution to \eqref{fepojpow} starting from $y$ such that \eqref{zerospen} holds. Let $R_0>0$ with $y \in B_{R_0}$ and $T>0$. Assume that $u \in W^{2,1}_{d+1}(B_{R_0} \times (0,T)) \cap C(\overline{B}_{R_0} \times [0, T] )$ such that $\partial_t u \in L^{\infty}(B_{R_0} \times (0,T))$ and
$\|\nabla u\| \in L^{\infty}(B_{R_0} \times (0,T))$. Then for $y \in B_R$ where $R \in (0, R_0)$, it holds $\widetilde{\P}_{y}$-a.s.
$$
u(\widetilde{X}_{T\wedge \widetilde{D}_R}, T \wedge \widetilde{D}_R) - u(y,0) = \int_0^{ T \wedge \widetilde{D}_R} \nabla u(\widetilde{X}_s,s) \widehat{\sigma}(\widetilde{X}_s) d\widetilde{W}_s+
\int_0^{ T\wedge \widetilde{D}_R} (\partial_t u + Lu) (\widetilde{X}_s,s) ds, 
$$
where $Lu:= \frac12 {\rm trace}(\widehat{A} \nabla^2 u)+ \langle \mathbf{G}, \nabla u \rangle$
and $\widehat{\sigma} =(\widehat{\sigma}_{ij})_{1 \leq i,j \leq d}$ is a $d\times d$ matrix of functions as in Definition \ref{weaksolution}.

\end{theorem}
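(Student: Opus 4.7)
The plan is to prove the Itô formula by a mollification-plus-limit procedure, using the local Krylov estimate of Theorem \ref{krylovtype} to control the error terms. Fix $R \in (0, R_0)$ with $y \in B_R$ and choose an intermediate radius $R < R' < R_0$. By a standard extension of $u$ to a neighbourhood of $\overline{B}_{R'} \times [0,T]$ combined with a Friedrichs mollifier, I produce a sequence $(u_n)_{n \ge 1} \subset C^\infty(\overline{B}_{R'} \times [0, T])$ such that $u_n \to u$ uniformly on $\overline{B}_R \times [0, T]$, $u_n \to u$ in $W^{2,1}_{d+1}(B_R \times (0, T))$, and $\sup_n \bigl(\|\partial_t u_n\|_{L^\infty(B_R \times (0,T))} + \|\nabla u_n\|_{L^\infty(B_R \times (0,T))}\bigr) \le M < \infty$; the latter bound follows from the hypotheses $\partial_t u, \|\nabla u\| \in L^\infty(B_{R_0} \times (0,T))$ and standard properties of mollifiers. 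For each $n$, the classical Itô formula applied to the smooth $u_n$ and the continuous semimartingale $\widetilde{X}$ stopped at $\widetilde{D}_R$ gives
\begin{equation*}
u_n(\widetilde{X}_{T \wedge \widetilde{D}_R}, T \wedge \widetilde{D}_R) - u_n(y, 0) = \int_0^{T \wedge \widetilde{D}_R} \nabla u_n(\widetilde{X}_s, s)\widehat{\sigma}(\widetilde{X}_s)\,d\widetilde{W}_s + \int_0^{T \wedge \widetilde{D}_R} (\partial_t u_n + L u_n)(\widetilde{X}_s, s)\,ds.
\end{equation*}

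To pass to the limit, I treat the three terms separately. The left-hand side converges $\widetilde{\P}_y$-a.s. to $u(\widetilde{X}_{T \wedge \widetilde{D}_R}, T \wedge \widetilde{D}_R) - u(y,0)$ by uniform convergence of $u_n$ on $\overline{B}_R \times [0, T]$ together with path-continuity of $\widetilde{X}$. For the Riemann integral, set $f_n := \partial_t u_n + L u_n - \partial_t u - Lu$ and observe that $\psi L w = \tfrac12 \mathrm{tr}(A\nabla^2 w) + \psi\langle \mathbf{G}, \nabla w\rangle$. Under \textbf{(C3)} the matrix $A$ is locally bounded (by ellipticity), $\mathbf{G} \in L^\infty_{loc}$, and $\psi \in L^q_{loc}$ with $q > d+1$. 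Combined with $\nabla^2 u_n \to \nabla^2 u$ in $L^{d+1}$ and the uniformly bounded $L^{d+1}$-convergences $\nabla u_n \to \nabla u$, $\partial_t u_n \to \partial_t u$, this yields $\psi f_n \to 0$ in $L^{d+1}(B_R \times (0, T))$. Theorem \ref{krylovtype} then gives $\widetilde{\E}_y \bigl[\int_0^{T \wedge \widetilde{D}_R} |f_n|(\widetilde{X}_s, s)\,ds\bigr] \to 0$.

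For the stochastic integral, Doob's maximal inequality and the Itô isometry, applied to the martingale stopped at $\widetilde{D}_R$, reduce the problem to showing
\begin{equation*}
\widetilde{\E}_y\Bigl[\int_0^{T \wedge \widetilde{D}_R} \|(\nabla u_n - \nabla u)\widehat{\sigma}\|^2(\widetilde{X}_s)\,ds\Bigr] \to 0.
\end{equation*}
Bounding $\|(\nabla u_n - \nabla u)\widehat{\sigma}\|^2 \le \|\nabla u_n - \nabla u\|^2 \cdot \tfrac{1}{\psi}\mathrm{tr}(A)$ and linearising $\|\nabla u_n - \nabla u\|^2 \le 2M\|\nabla u_n - \nabla u\|$ via the uniform $L^\infty$ bound, the integrand $h_n$ satisfies $\psi h_n \le 2M\|\nabla u_n - \nabla u\|\cdot \mathrm{tr}(A)$, which converges to zero in $L^{d+1}(B_R \times (0, T))$ by local boundedness of $\mathrm{tr}(A)$. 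A second application of Theorem \ref{krylovtype} provides the required estimate, so the stochastic integral converges in probability to $\int_0^{T \wedge \widetilde{D}_R} \nabla u(\widetilde{X}_s, s)\widehat{\sigma}(\widetilde{X}_s)\,d\widetilde{W}_s$, which is well defined since $\nabla u \in L^\infty$ and $\|\widehat{\sigma}\|^2 = \tfrac{1}{\psi}\|\sigma\|^2$ is locally bounded under \textbf{(C3)}.

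The main obstacle is the careful bookkeeping verifying that $\psi(\partial_t u_n + Lu_n) \to \psi(\partial_t u + Lu)$ in $L^{d+1}(B_R \times (0, T))$, where the finely tuned exponents of \textbf{(C3)} (namely $q > d+1$, $p = d+1$, and $\mathbf{G} \in L^\infty_{loc}$) are precisely what is needed; this is also the reason why Theorem \ref{krylovtype} is formulated with a $\psi$-weighted right-hand side. Beyond this bookkeeping, the proof is a standard approximation argument combined with the local Krylov estimate and the stopping time $\widetilde{D}_R$ that keeps the process inside $B_R$.
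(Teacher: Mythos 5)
Your proposal is correct and follows essentially the same route as the paper's proof: extend $u$ in time, mollify, apply the classical It\^{o} formula to the smooth approximations, pass to the limit in the left-hand side by uniform convergence, and control both the $ds$-term and the quadratic variation of the stochastic-integral error via Theorem \ref{krylovtype} with the $\psi$-weighted $L^{d+1}$ bookkeeping. The only cosmetic deviations are that you linearize $\|\nabla u_n-\nabla u\|^2\le 2M\|\nabla u_n-\nabla u\|$ using the uniform gradient bound (the paper instead uses that $\psi$ cancels against $\tfrac{1}{\psi}$ in $\widehat{\sigma}$ and works with $L^{2d+2}$-convergence of the gradients) and that you invoke Doob's inequality where the paper uses Jensen plus the It\^{o} isometry.
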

\begin{proof}
Take $T_0>0$ satisfying $T_0>T$. Extend $u$ to $\overline{B}_{R_0} \times [-T_0,T_0]$ by
$$
u(x, t)  = u(x, 0) \; \text{ for } -T_0 \leq t<0, \quad  u(x, t)  = u(x, T) \; \text{ for } T<t \leq T_0, \, x \in \overline{B}_{R_0}.
$$
Then it holds 
$$
u  \in W^{2,1}_{d+1}(B_{R_0} \times (0,T)) \cap C(\overline{B}_{R_0} \times [-T, T] ) \ \text{ and } \ \partial_t u, \|\nabla u\| 
\in L^{r}(B_{R_0} \times (-T_0,T_0)), r\in [1,\infty).
$$
Let $\zeta$ be a standard mollifier on $\R^{d+1}$ and consider for $n\in \N$, $\zeta_{n}(x,t):=\frac{1}{(n+N_0)^{d+1}}\zeta((n+N_0)(x,t))$, for sufficiently large $N_0\in \N$. Then it holds $u_n:= u * \zeta_n \in C^{\infty}(\overline{B}_R \times [0, T])$, such that $\lim_{n \rightarrow \infty} \|u_n-u\|_{W^{2,1}_{d+1}(B_R \times (0,T))}=0$, $\lim_{n \rightarrow \infty} \|\nabla u_n-\nabla u\|_{L^{r}(B_R \times (0,T))}=0$, and  $\| \partial_t u - \partial_t u_n \|_{L^{r}(B_R \times (0,T))}$, for all $r\in [1,\infty)$. Let $y \in B_R$. By It\^{o}'s formula,  for any $n\geq 1$
\begin{eqnarray} 
&&u_n(\widetilde{X}_{T\wedge \widetilde{D}_R}, T \wedge \widetilde{D}_R) - u_n(y,0) \nonumber \\[5pt]
&=& \int_0^{ T \wedge \widetilde{D}_R} \nabla u_n(\widetilde{X}_s,s)\,\widehat{\sigma}(\widetilde{X}_s) d\widetilde{W}_s+\int_0^{ T\wedge \widetilde{D}_R}(\partial_t u_n + Lu_n) (\widetilde{X}_s,s) ds,  \;\; \;\; \widetilde{\P}_{y}\text{-a.s. } \qquad \;\; \label{itoaprox}
\end{eqnarray}
By the Sobolev embedding, there exists a constant $C>0$, independent of all $u_n$ and $u$, such that
$$
\displaystyle \sup_{\overline{B}_R \times [0,T]} |u_n-u| \leq  C\|u_n-u\|_{W^{1,1}_{d+1}}(B_R \times (0,T)).
$$
Thus $\lim_{n \rightarrow \infty} u_n(y,0) = u(y,0)$ and $\P_y$-a.s.
$$
u_n(\widetilde{X}_{ T \wedge \widetilde{D}_R}, T \wedge \widetilde{D}_R)  \longrightarrow u(\widetilde{X}_{ T \wedge \widetilde{D}_R}, T \wedge \widetilde{D}_R) \, \text{ as }\, n \rightarrow \infty.
$$ 
By Theorem \ref{krylovtype},
\begin{eqnarray*} 
&&\widetilde{\mathbb{E}}_y\Big[ \big| \int_0^{T\wedge \widetilde{D}_R} (\partial_t u_n + Lu_n) (\widetilde{X}_s,s)ds  -\int_0^{T\wedge \widetilde{D}_R} (\partial_t u + Lu) (\widetilde{X}_s,s)ds  \big |\Big] \\
&&\leq \widetilde{\E}_y \Big [\int_0^{T \wedge \widetilde{D}_R} |\partial_t u-\partial_t u_n|(\widetilde{X}_s,s) ds \Big ] + \widetilde{\E}_y \Big [ \int_0^{T \wedge \widetilde{D}_R} |Lu-Lu_n|(\widetilde{X}_s,s)ds   \Big ] \\
&&\leq C\| \psi \partial_t u- \psi \partial_t u_n\|_{L^{d+1}(B_R \times (0,T))} + C\|\psi Lu-\psi Lu_n\|_{L^{d+1}(B_R \times (0,T))}  \\
&& \leq C \|\psi \|_{L^q(B_R \times (0,T))} \| \partial_t u - \partial_t u_n \|_{L^{\left( \frac{1}{d+1}-\frac{1}{q}  \right)^{-1}}(B_R \times (0,T))} \\
&& \quad \quad +C \sum_{i,j=1}^d  \|a_{ij}\|_{L^{\infty}(B_R)} \cdot \| \partial_{ij} u - \partial_{ij} u_n \|_{L^{d+1}(B_R \times (0,T))}
\\
&& \quad \quad + C \sum_{i=1}^d \|g_i \psi \|_{L^{q}(B_R)}  \| \partial_i u - \partial_i u_n \|_{L^{\left( \frac{1}{d+1}-\frac{1}{q}  \right)^{-1}}(B_R \times (0,T))}  \\
&& \; \longrightarrow 0 \quad \; \text{as } n \rightarrow \infty, \;
\end{eqnarray*}
where $C>0$ is a constant which is independent of $u$ and $u_n$. Using Jensen's inequality, It\^{o}-isometry and Theorem \ref{krylovtype}, we obtain with $|\sigma|_F:=\sqrt{\sum_{i,j=1}^d \sigma_{ij}^2}$
\begin{eqnarray*}
&&\widetilde{\E}_y \Big[\big| \int_0^{ T \wedge \widetilde{D}_R} \big ( \nabla u_n(\widetilde{X}_s,s) - \nabla u(\widetilde{X}_s,s) \big )\,\widehat{\sigma}(\widetilde{X}_s) dW_s \big| \Big] \\
&&\leq  \widetilde{\E}_y \Big [ \big| \int_0^{ T \wedge \widetilde{D}_R} \big ( \nabla u_n(\widetilde{X}_s,s) - \nabla u(\widetilde{X}_s,s) \big )\,\widehat{\sigma}(\widetilde{X}_s) dW_s \big|^2 \Big]^{1/2} \\
&&= \widetilde{\E}_y \Big[ \int_0^{ T \wedge \widetilde{D}_R}\big \|  \big ( \nabla u_n(\widetilde{X}_s,s) - \nabla u(\widetilde{X}_s,s) \big )\,\widehat{\sigma} (\widetilde{X}_s) \big \| ^2 ds\Big]^{1/2}  \\
&& \leq  \big(C \big \| \|(\nabla u_n - \nabla u) \widehat{\sigma} \|^2 \psi  \big \|_{L^{d+1}(B_R \times (0,T))} \big)^{1/2}\\
&&\leq \sqrt{C}\| (\nabla u_n - \nabla u) \sigma \|_{L^{2d+2}(B_R \times (0,T))} \\
&&\leq \sqrt{C}\||\sigma|_F \|_{L^{\infty}(B_R)} \| \nabla u_n- \nabla u\|_{L^{2d+2}(B_R \times (0,T))} \longrightarrow 0 \; \text{ as } n \rightarrow \infty.
\end{eqnarray*}
Letting $n \rightarrow \infty$ in \eqref{itoaprox}, the assertion holds.
\end{proof}

\begin{defn}\label{wellposedness}
Let $y \in \mathbb{R}^d$.
We say that {\bf uniqueness in law} holds for \eqref{fepojpow} starting from $y$, if 
for any two weak solutions 
$$
\widetilde{\M}_y =  
\big ((\widetilde{\Omega},  (\widetilde{\F}_t)_{t \ge 0},\widetilde{\F}, \widetilde{\P}_y), (\widetilde{X}_t)_{t \ge 0}, (\widetilde{W}_t)_{t \ge 0}\big )
$$ 
and 
$$
\widehat{\M}_y=  
\big ((\widehat{\Omega}, (\widehat{\F}_t)_{t \ge 0}, \widehat{\F}, \widehat{\P}_y), (\widehat{X}_t)_{t \ge 0}, (\widehat{W}_t)_{t \ge 0}\big )
$$ 
of  \eqref{fepojpow} starting from $y$, it holds $\widetilde{\P}_y\circ \widetilde{X}^{-1}=\widehat{\P}_y\circ \widehat{X}^{-1}$ on $\mathcal{B}(C([0, \infty), \mathbb{R}^d))$. 
\end{defn}
\begin{theorem} \label{weifjowej}
Assume {\bf (C3)} and that $\M =  (\Omega, (\F_t)_{t \ge 0}, \F, (X_t)_{t \ge 0}, (\P_x)_{x \in \R^d\cup \{\Delta\}}   )$ as in Theorem \ref{existhunt4} is non-explosive. Let $y \in \mathbb{R}^d$ be arbitrarily given but fixed and 
$$
\widetilde{\M}_y =  
\big ((\widetilde{\Omega},  (\widetilde{\F}_t)_{t \ge 0},\widetilde{\F}, \widetilde{\P}_y), (\widetilde{X}_t)_{t \ge 0}, (\widetilde{W}_t)_{t \ge 0}\big )
$$
be a weak solution to \eqref{fepojpow} starting from $y$ such that \eqref{zerospen} holds. Then
$$
\widetilde{\P}_y \circ \widetilde{X}^{-1} = \P_y \circ X^{-1} \quad \text{ on\, $\mathcal{B}(C([0, \infty), \mathbb{R}^d))$}.
$$
In particular, uniqueness in law holds for \eqref{fepojpow} starting from $y$ among all weak solutions $\widetilde{\M}_y$ satisfying \eqref{zerospen}. 
\end{theorem}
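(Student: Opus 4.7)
The plan is to adapt the Stroock--Varadhan martingale-problem approach. By Remark \ref{remweaksol}(iii), $\widetilde{\M}_y$ solves the $(L, C_0^2(\R^d))$-martingale problem with initial condition $y$; since $\M$ is non-explosive, Theorem \ref{weakexistence4}(ii) provides a weak solution $\M_y$ built from $\M$ whose one-dimensional marginals are $\E_y[f(X_T)] = P_T f(y)$. It therefore suffices to show
\begin{equation*}
\widetilde{\E}_y[f(\widetilde{X}_T)] = P_T f(y), \quad f \in C_0^{\infty}(\R^d),\ T > 0,
\end{equation*}
for every weak solution $\widetilde{\M}_y$ satisfying \eqref{zerospen} (and indeed for every starting point $y' \in \R^d$), and then to promote this one-dimensional marginal identity to equality of all finite-dimensional distributions via a regular-conditional-probability argument on the canonical path space.

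For the marginal identity, fix $f \in C_0^{\infty}(\R^d)$, $T>0$, and consider the time-reversed function $v(x,s) := P_{T-s} f(x)$ on $\R^d \times [0,T]$. Setting $u_f := P_{\cdot} f$, Theorem \ref{fvodkoko} provides $u_f \in C_b(\R^d \times [0,\infty)) \cap \bigcap_{R>0} W^{2,1}_{d+1, \infty}(B_R \times (0,\infty))$ with $\partial_t u_f \in L^{\infty}(\R^d \times (0,\infty))$, $\|\nabla u_f\| \in \bigcap_{R>0} L^{\infty}(B_R \times (0,\infty))$, and $\partial_t u_f = L u_f$ a.e. Translating to $v$, one has $v \in C_b(\R^d \times [0,T]) \cap \bigcap_{R>0} W^{2,1}_{d+1}(B_R \times (0,T))$ with $\partial_s v$ and $\|\nabla v\|$ bounded on each $B_R \times (0,T)$, and
\[
\partial_s v + L v = -\partial_t u_f(\cdot,T-\cdot) + L u_f(\cdot,T-\cdot) = 0 \quad \text{a.e. on } \R^d \times (0,T).
\]
Hence $v$ is admissible in the local It\^{o} formula (Theorem \ref{fefere}); for any $R_0>0$ with $y \in B_{R_0}$ and $R \in (0,R_0)$ with $y \in B_R$, the bounded-variation term vanishes $\widetilde{\P}_y$-a.s.\ thanks to the local Krylov estimate of Theorem \ref{krylovtype} applied to $\partial_s v + Lv \equiv 0$, giving
\[
v(\widetilde{X}_{T\wedge \widetilde{D}_R}, T\wedge \widetilde{D}_R) - v(y,0) = \int_0^{T\wedge \widetilde{D}_R} \nabla v(\widetilde{X}_s,s)\,\widehat{\sigma}(\widetilde{X}_s)\, d\widetilde{W}_s \quad \widetilde{\P}_y\text{-a.s.}
\]
The stochastic integral is a true $\widetilde{\P}_y$-martingale since $\|\nabla v\|$ is bounded on $B_R \times (0,T)$ and $\widehat{\sigma}$ is locally bounded (Definition \ref{weaksolution}); taking $\widetilde{\E}_y$ gives $\widetilde{\E}_y[v(\widetilde{X}_{T\wedge \widetilde{D}_R}, T\wedge \widetilde{D}_R)] = P_T f(y)$.

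Next, let $R \to \infty$. By Definition \ref{weaksolution}(iv), $\widetilde{D}_R \nearrow \infty$ $\widetilde{\P}_y$-a.s., so continuity of $v$ on $\R^d \times [0,T]$ gives $v(\widetilde{X}_{T\wedge \widetilde{D}_R}, T\wedge \widetilde{D}_R) \to v(\widetilde{X}_T,T) = f(\widetilde{X}_T)$ $\widetilde{\P}_y$-a.s., and dominated convergence (using $\|v\|_{\infty} \le \|f\|_{\infty}$ by sub-Markovianity of $(P_t)$) yields the marginal identity. Since the preceding argument is uniform in the starting point, every weak solution starting from any $y' \in \R^d$ satisfying \eqref{zerospen} has one-dimensional marginals $P_t(y',\cdot)$. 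To promote this to full uniqueness in law, pass to the canonical realization (cf.\ Remark \ref{remweaksol}(iii)) and consider, for $0 < t_1 < \cdots < t_n$ and $h_k \in C_b(\R^d)$, the regular conditional distribution of $(\widetilde{X}_{t_{k-1}+s})_{s \ge 0}$ given $\widetilde{\F}_{t_{k-1}}$. By the usual Stroock--Varadhan conditioning (cf.\ \cite{StrVar}), this conditional law is, for $\widetilde{\P}_y$-a.e.\ $\omega$, a weak solution of \eqref{fepojpow} starting from $\widetilde{X}_{t_{k-1}}(\omega)$, and \eqref{zerospen} is inherited by a Fubini argument applied to $1_{\{\sqrt{1/\psi}=0\}}$. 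The one-dimensional identity then yields $\widetilde{\E}_y[h_k(\widetilde{X}_{t_k}) \mid \widetilde{\F}_{t_{k-1}}] = P_{t_k-t_{k-1}} h_k(\widetilde{X}_{t_{k-1}})$ $\widetilde{\P}_y$-a.s. Iterating together with the Markov property of $(P_t)$ for $\M_y$, all finite-dimensional distributions of $\widetilde{X}$ and $X$ coincide, so their laws on $\mathcal{B}(C([0,\infty), \R^d))$ agree.

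The main obstacle lies in the last step: one has to verify rigorously that, under the regular conditional distribution, the shifted process is again a weak solution with respect to some Brownian motion and that \eqref{zerospen} survives the conditioning, so that the one-dimensional marginal identity from the It\^{o}-formula step can be reapplied pointwise. This is where the conditional measurability arguments and the Fubini-based inheritance of \eqref{zerospen} must be assembled carefully, and it is the technical core of the Stroock--Varadhan-style reduction in the present degenerate, discontinuous-coefficient setting.
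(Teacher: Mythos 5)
Your first half coincides with the paper's argument: the identity $\widetilde{\E}_y[f(\widetilde{X}_T)]=P_Tf(y)$ for $f\in C_0^{\infty}(\R^d)$ is obtained exactly as in the paper from Theorem \ref{fvodkoko} applied to $g(x,t)=u_f(x,T-t)$, the local Krylov estimate (Theorem \ref{krylovtype}) to annihilate the term $\partial_t g+Lg=0$ along the solution, the local It\^{o} formula (Theorem \ref{fefere}), the martingale property of the stochastic integral, and $R\to\infty$ with dominated convergence. The genuine gap is in the passage from one-dimensional marginals to equality of laws. You propose to condition the weak solution itself: that the regular conditional distribution of the shifted process given $\widetilde{\F}_{t_{k-1}}$ is again a weak solution of \eqref{fepojpow}, with respect to some Brownian motion, starting from $\widetilde{X}_{t_{k-1}}(\omega)$ and still satisfying \eqref{zerospen}, so that the marginal identity can be reapplied pointwise. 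You then declare precisely this verification to be ``the main obstacle'' and do not carry it out. But that is the nontrivial content of the second half: the marginal identity was proved only for weak solutions in the sense of Definition \ref{weaksolution} satisfying \eqref{zerospen} (both Theorem \ref{krylovtype} and Theorem \ref{fefere} are formulated for such objects), so before it can be used under the conditional law you must produce a filtration, a Brownian motion and the SDE representation for the conditioned process, and identify the stochastic integral under the conditional measure -- a martingale-representation and null-set-management argument that is neither sketched nor reduced to a citable result. The Fubini inheritance of \eqref{zerospen} is the easy part; the weak-solution property of the conditional law is not, and without it the proposal establishes only equality of one-dimensional marginals.

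The paper sidesteps exactly this difficulty by changing level: by Remark \ref{remweaksol}(iii), $X$ (for every starting point $x$, using that $\M_x$ satisfies \eqref{zerospen} by Remark \ref{casemdandindepbor}(i)) and $\widetilde{X}$ solve the $(L,C_0^{2}(\R^d))$-martingale problem, a property that depends only on finite-dimensional distributions and hence transfers to canonical realizations. Defining $\bar{\Q}_x:=\bar{\P}_x$ for $x\neq y$ and $\bar{\Q}_y:=\bar{\widetilde{\P}}_y$, one has two families of martingale-problem solutions with identical one-dimensional marginals for every initial distribution, and as in the proof of \cite[Chapter 4, Theorem 4.2]{EthKurtz86} one deduces the Markov property of $\bar{X}$ under both families; equality of one-dimensional marginals together with the Markov property then gives equality of all finite-dimensional distributions, hence of the laws on $\mathcal{B}(C([0,\infty),\R^d))$. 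To complete your write-up you should either carry out the conditioning of the weak solution rigorously (conditional Brownian motion, identification of the stochastic integral under the regular conditional probability, measurable selection of null sets), or follow the paper and perform the conditioning at the level of the martingale problem, where it is standard.
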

\begin{proof} 
Let $f \in C_0^{\infty}(\R^d)$. For  $T>0$, define $g(x,t):=u_f(x, T-t)$,  $(x,t) \in \R^d \times [0,T]$, where $u_f$ is defined as in Theorem \ref{fvodkoko}.
Then by Theorem \ref{fvodkoko},
\begin{eqnarray*}
&&g \in C_b\left(\R^d \times [0, T] \right) \cap \big( \displaystyle \bigcap_{r>0} W^{2,1}_{d+1, \infty} (B_r \times (0,T)) \big), \\
&&\partial_t g \in  L^{\infty}(\R^d \times (0,T)),\; \,  \; \partial_i g \in \displaystyle \bigcap_{r>0} L^{\infty}(B_{r} \times (0,T)),  \, 1\leq i \leq d,
\end{eqnarray*}
and it holds
\begin{eqnarray*}
\frac{\partial g}{\partial t} + Lg = 0 \;\;\,  \text{ a.e. in } \R^d \times (0,T), \;\; \; \;g(x, T) = f(x) \;\; \text{for all }  x \in \R^d.
\end{eqnarray*}
Let $R>0$ be such that $y\in B_R$ and set $\widetilde{D}_R:=\inf \{ t \geq 0 : \widetilde{X}_t\in \R^d \setminus B_R  \}$.
Applying Theorem \ref{krylovtype} to $\widetilde{\M}_y$, it holds 
$$
\widetilde{\E}_y \Big[ \int_0^{T \wedge \widetilde{D}_R}  \Big \vert \frac{\partial g}{\partial t} + Lg \Big \vert (\widetilde{X}_s,s) ds   \Big] =0, \; \; \quad
$$
hence
$$
\int_0^{T \wedge \widetilde{D}_R}  \big( \frac{\partial g}{\partial t} + Lg  \big)(\widetilde{X}_s,s)\,ds  = 0, \;\;\;  \widetilde{\P}_y\text{-a.s., }
$$
and thus by Theorem \ref{fefere},
$$
g(\widetilde{X}_{T \wedge \widetilde{D}_R}, T\wedge \widetilde{D}_R) - g(x,0) = \int_0^{T \wedge \widetilde{D}_R} \nabla g(\widetilde{X}_s,s) \widehat{\sigma}(\widetilde{X}_s) d\widetilde{W}_s, \; \;\; \; \widetilde{\P}_{y}\text{-a.s. }
$$
Since $(\nabla g)\widehat{\sigma}$ is a.e. bounded on $\overline{B}_R\times [0,T]$, by Remark \ref{casemdandindepbor}(i), we can see that $\big (g(\widetilde{X}_{t \wedge \widetilde{D}_R},t\wedge \widetilde{D}_R) - g(x,0)\big )_{t\ge 0}$ is a martingale starting from $0$. Thus
$$
\widetilde{\E}_y \left[ g(\widetilde{X}_{T\wedge \widetilde{D}_R},T \wedge \widetilde{D}_R)  \right]  = g(y,0).
$$
Letting $R \rightarrow \infty$ and using Lebesgue's Theorem, we obtain 
$$
\widetilde{\E}_y[f(\widetilde{X}_T)] =\widetilde{\E}_y[g(\widetilde{X}_T, T)]  = g(y,0).
$$
Since $\M$ as in Theorem \ref{existhunt4} is non-explosive, by Remark \ref{remweaksol}(ii), 
$$
\M_x=  \big ((\Omega,  (\F_t)_{t \ge 0},\F, \P_x), (X_t)_{t\ge 0}, (W_t)_{t \ge 0}\big )
$$  
is a weak solution to \eqref{fepojpow} starting from $x$ for any $x\in \R^d$.
Thus, analogously for $\M_y$, we obtain $\E_y[f(X_T)] = g(y,0)$ and so
$$
\widetilde{\E}_y[f(\widetilde{X}_T)]=\E_y[f(X_T)].
$$ 
It follows that the one-dimensional marginal distributions of $(\widetilde{X}_t)_{t \ge 0}$ with respect to $\widetilde{\P}_y$ and $(X_t)_{t \ge 0}$ with respect to $\P_y$ and coincide, i.e. $\widetilde{\P}_y \circ \widetilde{X}_t^{-1}=\P_y \circ X_t^{-1}$, $t\ge 0$. By Remark \ref{remweaksol}(iii), $(X_t)_{t\ge 0}$ solves the $(L,C_0^{2}(\R^d))$-martingale problem with initial condition $x$ for any $x\in \R^d$ and  $(\widetilde{X}_t)_{t \ge 0}$ solves the $(L,C_0^{2}(\R^d))$-martingale problem with initial condition $y$ and we may hence consider their canonical realizations $\big (\bar\Omega,  (\bar{\F}_t)_{t\ge 0}, \bar{\F}, (\bar{X}_t)_{t\ge 0}, \bar{\P}_x\big )$, $x\in \R^d$ and $\big (\bar\Omega,  (\bar{\F}_t)_{t\ge 0}, \bar{\F}, (\bar{X}_t)_{t\ge 0}, \bar{\widetilde{\P}}_y\big )$. Now define 
$$
\bar\Q_x:=\bar{\P}_x, \quad x\in \R^d\setminus \{y\}, \qquad\bar\Q_y:=\bar{\widetilde{\P}}_y.
$$
Then both $((\bar{X}_t)_{t\ge 0},\bar{\P}_x)$ and  $((\bar{X}_t)_{t\ge 0},\bar{\Q}_x)$ are solutions to the $(L,C_0^{2}(\R^d))$-martingale problem with initial condition $x$ for any $x\in \R^d$ (and thus also for any initial distribution $\nu$) and both have the same one dimensional marginal distributions for any initial distribution  $\nu$, i.e. on $\mathcal{B}(C([0, \infty), \mathbb{R}^d))$
$$
\bar{\P}_\nu(\bar{X}_t\in \cdot)=\int_{\R^d}\bar{\P}_x(\bar{X}_t\in \cdot)\nu(dx)=\int_{\R^d}\bar{\Q}_x(\bar{X}_t\in \cdot)\nu(dx)=\bar{\Q}_\nu(\bar{X}_t\in \cdot), \quad t\ge 0.
$$
Then exactly as in the proof of \cite[Chapter 4, Theorem 4.2]{EthKurtz86}, we can see that $(\bar X_t)_{t\ge 0}$ satisfies the Markov property with respect to $\bar{\Q}_x$  and $\bar{\P}_x$  for each initial condition $x\in \R^d$, hence in particular for the initial condition $y$. Then, since the one dimensional distributions of $(\bar X_t)_{t\ge 0}$ with respect to $\bar{\Q}_y$  and $\bar{\P}_y$ coincide, by the Markov property, also the finite dimensional distributions of $(\bar X_t)_{t\ge 0}$ with respect to $\bar{\Q}_y$  and $\bar{\P}_y$ coincide. Thus we get
$$
\bar{\P}_y \circ \bar{X}^{-1}=\bar\Q_y \circ \bar{X}^{-1} \qquad \text{on }\quad \mathcal{B}(C([0, \infty), \mathbb{R}^d)).
$$
Since the canonical models have the same finite dimensional distributions than their original counterparts, the latter implies
$$
\P_y \circ X^{-1}=\widetilde{\P}_y \circ \widetilde{X}^{-1} \qquad \text{on }\quad \mathcal{B}(C([0, \infty), \mathbb{R}^d)).
$$
\end{proof}

\begin{example}\label{scopepsi}
In the following example, we illustrate the scope of our weak existence and uniqueness results for an explicitly chosen $\psi$. Let $p \in (d, \infty)$ be arbitrarily given and $A=(a_{ij})_{1 \leq i,j \leq d}$ be a symmetric matrix of measurable functions satisfying \eqref{uniellipa.e.} for each open ball $B$, ${\rm{div}} A \in L^p_{loc}(\mathbb{R}^d, \mathbb{R}^d)$ and $a_{ij} \in VMO_{loc}$ for all $1 \leq i,j \leq d$. Let $\sigma=(\sigma_{ij})_{1 \leq i,j \leq d}$ be a matrix of locally bounded and measurable functions on $\mathbb{R}^d$ with $A=\sigma \sigma^T$ pointwise on $\mathbb{R}^d$.
Let $\widehat{\mathbf{H}} \in L^p_{loc}(\mathbb{R}^d,\mathbb{R}^d)$ and $\phi \in L^{\infty}_{loc}(\mathbb{R}^d)$ and assume that for each open ball $B$ in $\mathbb{R}^d$ there exist constants $c_B>0, C_B>0$ such that
\begin{equation} \label{condiforphi}
c_B \leq \phi(x) \leq C_B \quad \text{ for a.e. $x \in B$}.
\end{equation}
For some $\alpha>0$, let $\psi$ be a measurable function such that $\psi(x)=\frac{1}{\phi(x)} \cdot \frac{1}{\|x\|^{\alpha}}$ for all $x \in \mathbb{R}^d \setminus \{0\}$. Consider the following conditions:
\begin{itemize}
\item[(a)]
$\alpha \cdot (\frac{p}{2} \vee 2)<d$, \, $\widehat{\bold{H}} \equiv 0$ on $B_{\varepsilon}(0)$ for some $\varepsilon>0$ and $\widehat{\bold{H}} \in L_{loc}^r(\mathbb{R}^d, \mathbb{R}^d)$ where $r \geq p$ with 
$(\frac{p}{2} \vee 2)^{-1}+\frac{1}{r}<\frac{2}{d}$,
\item[(b)]
$\alpha p<d$, \,$\widehat{\bold{H}}\in L^{\infty}(B_{\varepsilon}(0)) $ for some $\varepsilon>0$,
\item[(c)]
$2\alpha p<d$, $\widehat{\bold{H}}\in L^{2p}(B_{\varepsilon}(0)) $ for some $\varepsilon>0$.
\end{itemize}
Then, either of the conditions (a), (b), or (c) imply {\bf (C2)} where $C\equiv 0$, $\bold{H}=\widehat{\bold{H}}-\frac{1}{2\psi}{\rm  div} A$.
Indeed, for each case (a), (b), or (c),  $q$ is chosen to be any numbers
$q \in [\frac{p}{2}\vee 2, \frac{d}{\alpha})$, $q \in [p, \frac{d}{\alpha})$, or $q \in [2p, \frac{d}{\alpha})$, and
$s$ is chosen to be $r$, $d$, or $\frac{2d}{3}$, respectively. If (a), (b), or (c) holds, assume that $\mathbb{M}$ is non-explosive (for instance if \eqref{conscondit} holds). Then, by Theorem \ref{weakexistence4}(ii), there exists a (strong Markov) weak solution to
\begin{equation} \label{oursdexample}
X_t = x+ \int_0^t \big (\|X_s\|^{\frac{\alpha}{2}} \sqrt{\phi(X_s)}+ \gamma 1_{\{0\}}(X_s)\big )\,
\sigma(X_s) \, dW_s +   \int^{t}_{0}   \widehat{\mathbf{H}}(X_s) \, ds, \quad 0\le  t <\infty,
\end{equation}
for each $x \in \mathbb{R}^d$, $\gamma\ge 0$. Note that the integrals in \eqref{oursdexample} are up to indistinguishability the same no matter which Borel versions are chosen for the dispersion and drift coefficients (cf. Remark \ref{casemdandindepbor}(i)). Now, in order to discuss uniquenessin law, assume the following condition:
\begin{itemize}
\item[(d)]
$\alpha(d+1)<d$, ${\rm{div}} A\in L^{d+1}_{loc}(\mathbb{R}^d, \mathbb{R}^d)$, $\widehat{\bold{H}}$ is (pointwisely) locally bounded and for each open ball $B$, \eqref{uniellip} and $c_B \leq \phi(x) \leq C_B$ for all $x \in B$, for some constants $c_B>0, C_B>0$, i.e. \eqref{condiforphi} is fulfilled where \lq\lq for a.e.\rq\rq \, is replaced by \lq\lq for all\rq\rq, $\sqrt{\frac{1}{\psi}(x)}=\|x\|^{\frac{\alpha}{2}}\sqrt{\phi(x)} + \gamma 1_{\{0\}}(x)$, $x\in \R^d$ for some $\gamma\ge 0$.
\end{itemize}
Then (b) holds with $p=d+1$ and so {\bf (C3)} holds with $C\equiv 0$, $\bold{H}=\widehat{\bold{H}}-\frac{1}{2\psi}{\rm  div} A$, and $q \in (d+1, \frac{d}{\alpha})$ and subsequently we can see that {\bf (C)} is satisfied. Thus by Theorem \ref{weifjowej} applied to a fixed $y\in \R^d$, uniqueness in law for \eqref{oursdexample} starting from $y$ holds among all solutions $\widetilde{X}$ starting from $y$ that spend zero  time with respect to Lebesgue measure at the zeros of $\frac{1}{\psi}$ (i.e. \eqref{zerospen} holds).\\
Of course the condition \eqref{zerospen} is always fulfilled if 
$\|x\|^{\frac{\alpha}{2}}\sqrt{\phi(x)} + \gamma 1_{\{0\}}(x)>0$ 
for all $x\in \R^d$, i.e. if $\gamma>0$. In particular, any two solutions starting from $y$ with possibly different strictly positive $\gamma$ have the same law and satisfy the Markov property. However, two solutions, one corresponding to $\gamma=0$ and the other corresponding to $\gamma>0$ can be different, if the one that corresponds to $\gamma=0$ spends a strictly positive amount of time at zero (cf. Remark \ref{remex} below).
\end{example}

\begin{remark}\label{remex}
As a special case of \eqref{oursdexample}, consider a generalization of the Girsanov SDE to dimensions $d\ge 2$, namely 
\begin{eqnarray}\label{generalizedGirsanov}
X_t=x+ \int_0^t \|X_s\|^{\frac{\alpha}{2}}dW_s+\int^{t}_{0}   \widehat{\mathbf{H}}(X_s) \, ds, \quad 0 \leq t <\infty,
\end{eqnarray}
where $\widehat{\mathbf{H}}$ is locally bounded with at most linear growth and $\widehat{\mathbf{H}}(0)=0$. If $x=0$, then $X \equiv 0$ is a solution. On the other hand, we know from Theorem \ref{weakexistence4}(ii)  that there exists also a solution that spends (Lebesgue measure) zero time at $0$ for all $x \in \mathbb{R}^d$ (cf. Remark \ref{casemdandindepbor}(i)).
Thus uniqueness in law may fail if condition \eqref{zerospen} is not imposed. This is well-known in dimension one where uniqueness in law fails for the Girsanov SDE (i.e. \eqref{generalizedGirsanov} with $\widehat{\mathbf{H}}\equiv0$ and $d=1$) without imposing the condition \eqref{zerospen} (see e.g. \cite[Example 1.22]{CE05}).
\end{remark}

\section{Some analytic results}\label{someanlyticresults}
The following proposition is used in the proof of Proposition \ref{feokokoe1} to show that $f g \in VMO_{loc}$ if $f,g \in VMO_{loc} \cap L_{loc}^{\infty}(\mathbb{R}^d)$.
\begin{proposition} \label{basicpropvm}
\begin{itemize}
\item[(i)]
If $f,g \in VMO \cap L^{\infty}(\mathbb{R}^d)$, then $fg \in VMO \cap L^{\infty}(\mathbb{R}^d)$.

\item[(ii)]
If $B$ is an open ball in $\mathbb{R}^d$ and $f,g \in VMO(B) \cap L^{\infty}(B)$, then $fg \in VMO(B) \cap L^{\infty}(B)$. In particular, there exists $\widetilde{f}, \widetilde{g} \in VMO \cap L^{\infty}(\mathbb{R}^d)$ such that $\widetilde{f}|_{B}=f$ and $\widetilde{g}|_B=g$ on $B$.

\item[(iii)]
If $f,g \in VMO_{loc} \cap L^{\infty}_{loc}(\mathbb{R}^d)$, then $fg \in VMO_{loc} \cap L^{\infty}_{loc}(\mathbb{R}^d)$.
\end{itemize}
\end{proposition}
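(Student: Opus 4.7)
The plan is to prove the three parts in the order stated, building from the global case to the local one, with part (i) doing the real work via a standard product estimate and parts (ii)--(iii) amounting to reductions via restriction/extension arguments.

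For part (i), assume $f,g\in VMO\cap L^\infty(\mathbb{R}^d)$ with respective moduli $\omega_f,\omega_g$. The key pointwise bound is
$$
|f(x)g(x)-f(y)g(y)|\le \|f\|_{L^\infty(\mathbb{R}^d)}|g(x)-g(y)|+\|g\|_{L^\infty(\mathbb{R}^d)}|f(x)-f(y)|.
$$
Integrating this inequality over $B_r(z)\times B_r(z)$ and multiplying by $r^{-2d}$ yields
$$
r^{-2d}\iint_{B_r(z)\times B_r(z)} |fg(x)-fg(y)|\,dx\,dy\ \le\ \|f\|_{L^\infty(\mathbb{R}^d)}\omega_g(R)+\|g\|_{L^\infty(\mathbb{R}^d)}\omega_f(R)
$$
for all $z\in\mathbb{R}^d$ and $r<R$. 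The right-hand side is a positive continuous function of $R$ vanishing at $R=0$, so $fg\in VMO$; boundedness is immediate. This is essentially the only computational step in the whole proposition.

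For part (ii), the only subtle point is producing bounded extensions. Given $f\in VMO(B)\cap L^\infty(B)$, by definition there exists $\widetilde f\in VMO$ with $\widetilde f|_B=f$, but a priori $\widetilde f$ need not be bounded. I would fix this by truncation: setting $M:=\|f\|_{L^\infty(B)}$ and
$$
\widetilde f^{\,*}:= \max\bigl(-M,\min(M,\widetilde f)\bigr),
$$
one has $\widetilde f^{\,*}\in L^\infty(\mathbb{R}^d)$ with $\|\widetilde f^{\,*}\|_{L^\infty}\le M$ and $\widetilde f^{\,*}|_B=f$ a.e.\ (since $|f|\le M$ a.e.\ on $B$). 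The truncation $t\mapsto\max(-M,\min(M,t))$ is $1$-Lipschitz, hence
$$
|\widetilde f^{\,*}(x)-\widetilde f^{\,*}(y)|\le |\widetilde f(x)-\widetilde f(y)|,
$$
so $\widetilde f^{\,*}\in VMO$ with modulus bounded by that of $\widetilde f$. Construct $\widetilde g^{\,*}$ analogously. This proves the second (``in particular'') sentence of (ii); then applying part (i) to $\widetilde f^{\,*}\widetilde g^{\,*}$ yields an element of $VMO\cap L^\infty(\mathbb{R}^d)$ whose restriction to $B$ is $fg$, giving $fg\in VMO(B)\cap L^\infty(B)$.

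Part (iii) is then an immediate consequence: for any open ball $B\subset\mathbb{R}^d$, $f|_B,g|_B\in VMO(B)\cap L^\infty(B)$ by assumption, so by (ii) $fg|_B\in VMO(B)\cap L^\infty(B)$, i.e.\ $fg\in VMO_{loc}\cap L^\infty_{loc}(\mathbb{R}^d)$. I expect no real obstacle anywhere; the only place that requires a moment of thought is the truncation argument in (ii), which hinges on the elementary observation that composition with a Lipschitz function does not increase the local mean oscillation.
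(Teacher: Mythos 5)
Your proposal is correct and follows essentially the same route as the paper: the same product estimate via $|fg(x)-fg(y)|\le \|f\|_{L^\infty}|g(x)-g(y)|+\|g\|_{L^\infty}|f(x)-f(y)|$ for (i), an extension made bounded by post-composing with a Lipschitz map equal to the identity on the relevant range for (ii) (the paper uses a smooth $\eta\in C_0^\infty(\mathbb{R})$ with $\eta(t)=t$ on $[-M,M]$ instead of your clamp, an immaterial difference), and the direct reduction to (ii) for (iii).
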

\begin{proof}
(i) Let $f,g \in VMO \cap L^{\infty}(\mathbb{R}^d)$. Then, there exist positive continuous functions $\omega_1$ and $\omega_2$ on $[0, \infty)$ with $\omega_1(0)=\omega_2(0)=0$ such that for all $z \in \mathbb{R}^d$ and $R>0$
$$
\sup_{r<R}\, r^{-2d}\int_{B_r(z)} \int_{B_r(z)} |f(x)-f(y)| \,dx dy \leq  \omega_1(R), \; \;\;\;
\sup_{r<R}\, r^{-2d}\int_{B_r(z)} \int_{B_r(z)} |g(x)-g(y)| \,dx dy \leq  \omega_2(R).
$$
Now let $z \in \mathbb{R}^d$ and $R>0$. Take $r \in (0, R)$. Then, for any $x, y \in B_r(z)$ it holds
$$
|f(x)g(x) - f(y)g(y)| \leq \|f\|_{L^{\infty}(\mathbb{R}^d)} |g(x)-g(y)| +\|g\|_{L^{\infty}(\mathbb{R}^d)} |f(x)-f(y)|.
$$
Thus,
$$
r^{-2d}\int_{B_r(z)} \int_{B_r(z)} |f(x)g(x)-f(y)g(y)| \,dx dy \leq  \|f\|_{L^{\infty}(\mathbb{R}^d)}\omega_1(R) +\|g\|_{L^{\infty}(\mathbb{R}^d)}\omega_2(R).
$$
Taking the supremum over $z \in \mathbb{R}^d$ and $r \in (0, R)$, the assertion follows. \\
(ii)
Let $f, g \in VMO(B) \cap L^{\infty}(B)$. Then there exists $\bar{f}, \bar{g} \in VMO$ such that $\bar{f}(x) = f(x)$  and $\bar{g}(x) = g(x)$ for all $x \in B$. Let $M= \|f\|_{L^{\infty}(B)} + \|g\|_{L^{\infty}(B)}+1$ and $\eta \in C_0^{\infty}(\mathbb{R})$ be a function satisfying $\eta(x)=x$ for all $x \in [-M,M]$. Let $\widetilde{f} = \eta \circ \bar{f}$ and $\widetilde{g}=\eta \circ \bar{g}$. Then, $\widetilde{f}, \widetilde{g} \in L^{\infty}(\mathbb{R}^d)$ satisfy $\widetilde{f}(x) = f(x)$ and $\widetilde{g}(x) = g(x)$ for a.e. $x \in B$. Moreover, $\widetilde{f}, \widetilde{g} \in VMO$ by the mean value theorem. Therefore, the assertion follows from (i).\\
(iii) The assertion directly follows from (ii).
\end{proof}

\begin{proposition} \label{vmoprop}
\begin{itemize}
\item[(i)]
Let $B,U \subset \mathbb{R}^d$ be open balls in $\mathbb{R}^d$ with $\overline{B} \subset U$. Let $A=(a_{ij})_{1 \leq i,j \leq d}$ be a (possibly non-symmetric) matrix of functions on $U$ with $a_{ij} \in VMO(U) \cap L^{\infty}(U)$ for all $1 \leq i,j \leq d$ such that there exists a constant $\lambda_U >0$ such that
\begin{equation} \label{locellipt}
\lambda_U \| \xi \|^2 \leq \langle A(x) \xi, \xi \rangle, \quad \text{ for a.e. $x \in U$ and all\, $\xi \in \mathbb{R}^d$}.
\end{equation}
Then, there exists a matrix of measurable functions $\widetilde{A}^B=(\widetilde{a}^B_{ij})_{1 \leq i,j \leq d}$ on $\mathbb{R}^d$
such that $\widetilde{a}_{ij}^B \in VMO \cap L^{\infty}(\mathbb{R}^d), \, \widetilde{a}_{ij}^B|_B = a_{ij}$ on $B$ for all $1 \leq i,j \leq d$ and that
\begin{equation} \label{globellipti}
\lambda_U \|\xi\| \leq \langle \widetilde{A}^B(x) \xi, \xi \rangle, \quad \text{ for a.e. $x \in \mathbb{R}^d$ and all\, $\xi \in \mathbb{R}^d$}.
\end{equation}

\item[(ii)]
Let $A=(a_{ij})_{1 \leq i,j \leq d}$ be a (possibly non-symmetric) matrix of functions on $\mathbb{R}^d$ with $a_{ij} \in VMO_{loc} \cap L^{\infty}_{loc}(\mathbb{R}^d)$ for all $1 \leq i,j \leq d$ such that for each open ball $U$ there exists a constant $\lambda_U>0$ such that \eqref{locellipt} holds. Then, for each open ball $B$ there exists a matrix of functions $\widetilde{A}^B=(\widetilde{a}^B_{ij})_{1 \leq i,j \leq d}$ on $\mathbb{R}^d$
such that $\widetilde{a}^B_{ij} \in VMO \cap L^{\infty}(\mathbb{R}^d)$, $\widetilde{a}_{ij}^B|_B = a_{ij}$ on $B$ and that for some constant $\lambda>0$ it holds
\begin{equation} \label{ellipticequa}
\lambda \|\xi\|^2 \leq \langle \widetilde{A}^B(x) \xi, \xi \rangle \quad \text{ for a.e. $x \in \mathbb{R}^d$ and all\, $\xi \in \mathbb{R}^d$}.
\end{equation}
\end{itemize}
\end{proposition}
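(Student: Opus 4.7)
The plan is to prove (i) by gluing the locally defined matrix $A$ to a multiple of the identity via a smooth cutoff, and then to deduce (ii) by localizing. The convex-combination structure will automatically preserve uniform ellipticity, while the $VMO \cap L^\infty$ assertion will follow from Proposition \ref{basicpropvm}(i) and the fact that smooth cutoffs and constants lie in $VMO \cap L^\infty(\mathbb{R}^d)$.

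For (i), I would first fix an intermediate open ball $V$ with $\overline{B} \subset V$ and $\overline{V} \subset U$, and choose $\eta \in C_0^\infty(\mathbb{R}^d)$ with $0 \leq \eta \leq 1$, $\eta \equiv 1$ on $\overline{B}$, and $\text{supp}\,\eta \subset V$. Note that $\eta \in VMO \cap L^\infty(\mathbb{R}^d)$ since $C_0(\mathbb{R}^d) \subset VMO$. Next, applying Proposition \ref{basicpropvm}(ii) to each $a_{ij} \in VMO(U) \cap L^\infty(U)$ (with $U$ itself as the ball), I obtain $\widetilde{a}_{ij} \in VMO \cap L^\infty(\mathbb{R}^d)$ with $\widetilde{a}_{ij} = a_{ij}$ a.e. on $U$. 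I then define
$$
\widetilde{A}^B_{ij}(x) := \eta(x)\,\widetilde{a}_{ij}(x) + \bigl(1-\eta(x)\bigr)\lambda_U \delta_{ij},\qquad x \in \mathbb{R}^d.
$$
By Proposition \ref{basicpropvm}(i), $\eta \widetilde{a}_{ij} \in VMO \cap L^\infty(\mathbb{R}^d)$, and since constants and $VMO \cap L^\infty$ functions are closed under linear combinations, the same holds for $\widetilde{A}^B_{ij}$. Moreover, $\eta \equiv 1$ on $B$ forces $\widetilde{A}^B|_B = A$ on $B$ (up to modification on a null set).

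The ellipticity \eqref{globellipti} is then split into two cases. For a.e.\ $x \in V$ (so $x \in U$) and all $\xi \in \mathbb{R}^d$,
$$
\bigl\langle \widetilde{A}^B(x)\xi,\xi\bigr\rangle = \eta(x)\,\langle A(x)\xi,\xi\rangle + \bigl(1-\eta(x)\bigr)\lambda_U\|\xi\|^2 \geq \lambda_U \|\xi\|^2
$$
by \eqref{locellipt} and $\eta(x) \in [0,1]$; for $x \notin V$ we have $\eta(x) = 0$, so $\widetilde{A}^B(x) = \lambda_U I$ and the estimate is immediate. For (ii) I would simply fix an open ball $U$ with $\overline{B} \subset U$, observe that the hypotheses of (i) are satisfied for $U$ (with $\lambda_U$ coming from the assumed local ellipticity), and apply (i) to obtain $\widetilde{A}^B$; \eqref{ellipticequa} then holds with $\lambda := \lambda_U$.

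There is no serious obstacle; the only point that requires care is ensuring that the extension $\widetilde{a}_{ij}$ from Proposition \ref{basicpropvm}(ii) can be arranged to agree with $a_{ij}$ on all of $U$ (not merely on $B$), since the ellipticity argument at points $x \in V \setminus \overline{B}$ uses \eqref{locellipt} for $A(x) = \widetilde{A}(x)$. This is handled by invoking Proposition \ref{basicpropvm}(ii) with $U$ itself playing the role of the ball, which the statement allows.
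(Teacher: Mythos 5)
Your proof is correct and follows essentially the same route as the paper: extend each $a_{ij}$ to $\mathbb{R}^d$ via Proposition \ref{basicpropvm}(ii), glue with $\lambda_U\,\mathrm{id}$ using a smooth cutoff supported in $U$ and equal to $1$ on $B$, obtain the $VMO\cap L^\infty$ property from Proposition \ref{basicpropvm}(i), and read off the ellipticity from the convex-combination structure, with (ii) reduced to (i) by localization. The intermediate ball $V$ you introduce is a harmless extra precaution; the paper simply takes $\mathrm{supp}(\chi)\subset U$ directly.
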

\begin{proof}
(i) Since $a_{ij} \in VMO(U)$, by the Proposition \ref{basicpropvm}(ii) there exists $\overline{a}_{ij} \in VMO \cap L^{\infty}(\mathbb{R}^d)$ such that $\overline{a}_{ij}|_{U}=a_{ij}$ on $U$. Set $\overline{A}=(\overline{a}_{ij})_{1\le i,j \le d}$.
Let $\chi \in C_0^{\infty}(\mathbb{R}^d)$, $\text{supp}(\chi)\subset U$, be such that $0 \leq \chi \leq 1$ on $\mathbb{R}^d$ and $\chi \equiv 1$ on $B$. Define $\widetilde{A}^B=(\widetilde{a}_{ij})_{1 \leq i,j \leq d}$ by
$$
\widetilde{A}^B = \chi \overline{A} + (1-\chi) \lambda_U id= \chi (\overline{A}-\lambda_U id) + \lambda_U id \quad \text{on $\mathbb{R}^d$}.
$$
Then, by the Proposition \ref{basicpropvm}(i) $\widetilde{a}_{ij}^B \in VMO \cap L^{\infty}(\mathbb{R}^d)$ for all $1 \leq i,j \leq d$. Moreover, \eqref{globellipti} follows.\\
(ii)
Let $B$ be a given open ball. Take an open ball $U$ with $\overline{B} \subset U$. Since $a_{ij}|_U \in VMO(U) \cap L^{\infty}(U)$ for all $1 \leq i,j \leq d$, the assertion now follows from (i), where \eqref{ellipticequa} holds with $\lambda=\lambda_U$.
\end{proof}

\begin{lemma} \label{stoneweier}
Assume {\bf (S1)}. Let $U$ be a bounded open subset of $\R^d$ and $T>0$. Set
\begin{eqnarray*}
&&\mathcal{S}:= \big \{h \in C_0^{\infty}(U \times (0,T)) : \text{there exists $N \in \N$ such that $h= \sum_{i=1}^N f_i g_i$,} \\
&& \qquad \qquad \qquad \qquad \text{where $f_i \in C_0^{\infty}(U)$, $g_i \in C_0^{\infty}((0,T))$ for all i=1,\dots, N }    \big \}.
\end{eqnarray*}
Then $C_0^{2}(U \times (0,T)) \subset \overline{\mathcal{S}}|_{C^2(\overline{U} \times [0,T])}$.
\end{lemma}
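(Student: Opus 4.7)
The plan is to first pass from the $C^2$ function $h$ to a smooth compactly supported approximation by mollifying with a product mollifier, and then to replace the convolution integral by a Riemann sum, which automatically has tensor-product structure. Precisely, let $h \in C_0^2(U \times (0,T))$, set $K := \mathrm{supp}(h)$ which is compact in $U \times (0,T)$, and extend $h$ by zero to $\R^{d+1}$. Pick nonnegative $\zeta_1 \in C_0^\infty(\R^d)$, $\zeta_2 \in C_0^\infty(\R)$ of integral one, supported in the closed unit ball. Then for $\epsilon>0$ smaller than $\mathrm{dist}(K, \partial(U \times (0,T)))$, the mollification
$$
h_\epsilon(x,t) := \iint_{\R^{d+1}} h(y,s)\, \zeta_1\!\bigl((x-y)/\epsilon\bigr)\,\zeta_2\!\bigl((t-s)/\epsilon\bigr)\,\epsilon^{-d-1} \, dy\,ds
$$
lies in $C_0^\infty(U \times (0,T))$, and standard mollifier properties give $\|h_\epsilon - h\|_{C^2(\overline{U}\times [0,T])} \to 0$ as $\epsilon \to 0$, since $h \in C_c^2(\R^{d+1})$.

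Next I would approximate $h_\epsilon$ for fixed $\epsilon$ by Riemann sums of its defining integral. Choose a bounded closed box $Q \subset \R^{d+1}$ containing $K$, and partition $Q$ into small subboxes $Q_j$ of centers $(y_j,s_j)$ and volumes $|\Delta_j|$. The associated Riemann sum
$$
\widetilde h_{\epsilon,\delta}(x,t) := \sum_j h(y_j,s_j)\,\zeta_1\!\bigl((x-y_j)/\epsilon\bigr)\,\zeta_2\!\bigl((t-s_j)/\epsilon\bigr)\,\epsilon^{-d-1}\,|\Delta_j|
$$
is a finite linear combination of products $\bigl(\epsilon^{-d}\,h(y_j,s_j)|\Delta_j|\,\zeta_1((\cdot-y_j)/\epsilon)\bigr)\cdot\bigl(\epsilon^{-1}\zeta_2((\cdot-s_j)/\epsilon)\bigr)$. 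Restricting the sum to those $j$ for which $(y_j,s_j)$ lies in a slight enlargement of $K$ (the other terms vanish anyway, as $h(y_j,s_j)=0$), and using that $\epsilon$ is chosen small enough to keep the supports of the two factors inside $U$ and $(0,T)$ respectively, each factor belongs to $C_0^\infty(U)$ or $C_0^\infty((0,T))$. Hence $\widetilde h_{\epsilon,\delta} \in \mathcal{S}$.

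For the convergence, observe that for fixed $\epsilon>0$ the integrand and all its partial derivatives in $(x,t)$ up to order two depend continuously on $(x,t,y,s)$ and are supported on the compact set $\overline{U}\times[0,T]\times Q$, hence they are uniformly continuous there. Differentiating under the integral sign (valid since $\zeta_1,\zeta_2$ are smooth) gives, for every multi-index $\alpha$ in $(x,t)$ with $|\alpha|\le 2$,
$$
\partial^\alpha \widetilde h_{\epsilon,\delta} - \partial^\alpha h_\epsilon \;=\; \sum_j \int_{Q_j} \bigl[F_\alpha(x,t,y_j,s_j) - F_\alpha(x,t,y,s)\bigr]\,dy\,ds,
$$
where $F_\alpha(x,t,y,s) := h(y,s)\,\partial^\alpha_{(x,t)}[\zeta_1((x-y)/\epsilon)\zeta_2((t-s)/\epsilon)]\,\epsilon^{-d-1}$. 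Uniform continuity of $F_\alpha$ on the compact product set then yields $\|\widetilde h_{\epsilon,\delta} - h_\epsilon\|_{C^2(\overline{U}\times[0,T])} \to 0$ as the mesh $\delta \to 0$. A diagonal argument selects $\epsilon_n \downarrow 0$ and mesh sizes $\delta_n \downarrow 0$ so that $\widetilde h_{\epsilon_n,\delta_n} \to h$ in $C^2(\overline{U}\times[0,T])$, establishing $h \in \overline{\mathcal{S}}|_{C^2(\overline{U}\times[0,T])}$.

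There is no deep obstacle here: the only delicate point is the bookkeeping needed to ensure that the Riemann-sum approximants retain compact support inside $U$ in the spatial variable and inside $(0,T)$ in the time variable, which is guaranteed by choosing $\epsilon$ small relative to the distance from $K$ to $\partial(U\times(0,T))$. Everything else reduces to classical mollification and uniform-continuity estimates on compact sets.
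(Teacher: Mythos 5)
Your proof is correct, but it takes a different route from the paper. The paper argues in two steps: first it invokes the Stone--Weierstrass theorem for the locally compact space $V\times(T_1,T_2)$ (via the cited corollary in Conway) to get \emph{sup-norm} density of the tensor-product algebra, and then it upgrades this to $C^2$-approximation by convolving both $h$ and the sup-norm approximant with a product mollifier $\xi_n=\eta_n\theta_n$, using Young's inequality in the form $\|(h-h_\varepsilon)*\xi_{n}\|_{C^2}\le\|\xi_{n}\|_{C^2}\,\|h-h_\varepsilon\|_{C^0}$ together with the observation that convolving a finite sum of products with a product mollifier again yields an element of $\mathcal{S}$. You dispense with Stone--Weierstrass altogether: after the same product-mollification step, you discretize the convolution integral by Riemann sums, which are automatically finite sums of functions of the form $f(x)g(t)$, and you get $C^2$-convergence of the Riemann sums from differentiation under the integral sign plus uniform continuity of the (compactly supported, jointly continuous) integrands $F_\alpha$, $|\alpha|\le 2$. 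The support bookkeeping you flag is handled correctly: only tags $(y_j,s_j)\in\mathrm{supp}(h)$ contribute, and $\epsilon<\mathrm{dist}(K,\partial(U\times(0,T)))$ forces the spatial and temporal factors into $C_0^\infty(U)$ and $C_0^\infty((0,T))$ respectively, so the approximants do lie in $\mathcal{S}$. Your argument is more elementary and fully constructive (no functional-analytic density theorem is needed), at the cost of the explicit Riemann-sum error analysis; the paper's version is shorter once the Stone--Weierstrass corollary is taken off the shelf, and its Young-inequality trick is the reusable idea there. Both proofs hinge on the same key structural fact, namely that the mollifier is chosen as a product $\eta(x)\theta(t)$.
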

\begin{proof} {\bf Step 1}: Let $V$ be a bounded open set in $\R^d$ and $T_1, T_2 \in \R$ with $T_1<T_2$.
Define 
\begin{eqnarray*}
&&\mathcal{R}:= \big \{h \in C_0^{\infty}(V \times (T_1,T_2))\mid \text{there exists $N \in \N$ such that $h= \sum_{i=1}^N f_i g_i$,} \\
&& \qquad \qquad \qquad \qquad \text{where $f_i \in C_0^{\infty}(V)$, $g_i \in C_0^{\infty}((T_1,T_2))$ for all $i=1,\dots, N$} \big \} .
\end{eqnarray*}
We claim that 
\begin{equation} \label{step1fr}
C_0^{2}(V \times (T_1,T_2)) \subset \overline{\mathcal{R}}|_{C(\overline{V} \times [T_1,T_2])}.
\end{equation}
Note that $V \times (T_1,T_2)$ is a locally compact space and $\overline{\mathcal{R}}|_{C(\overline{V} \times [T_1,T_2])}$ is a closed subalgebra of $C_{\infty}(V \times (T_1,T_2))$. We can easily check that for each $(x,t) \in V \times (T_1,T_2)$, there exists $\widetilde{h} \in \mathcal{R}$ such that $\widetilde{h}(x,t) \neq 0$. For $(x,t), (y,s) \in V \times (T_1,T_2)$ and $(x,t) \neq (y,s)$, there exists $\widehat{h} \in \mathcal{R}$ such that $\widehat{h}(x,t)=1$ and $\widehat{h}(y,s)=0$. Therefore by \cite[Chapter V, 8.3 Corollary]{Conw},  we obtain $\overline{\mathcal{R}}|_{C(\overline{V} \times [T_1,T_2])}= C_{\infty}(V \times (T_1,T_2))$ (the continuous functions on $V \times (T_1,T_2)$ that vanish at infinity, i.e. given $\varepsilon>0$, there exists a compact set $K \subset V \times (T_1,T_2)$ such that $|f(x)|< \varepsilon $ for all $x \in V \times (T_1,T_2) \setminus K$), so that our claim \eqref{step1fr} holds. \\
\centerline{}
{\bf Step 2}: $C_0^{2}(U \times (0,T)) \subset \overline{\mathcal{S}}|_{C^2(\overline{U} \times [0,T])}$. \\
For $n \in \N$, let $\eta_{n}$ be a standard mollifier on $\R^d$ and $\theta_{n}$ be a standard mollifier on $\R$. Then $\xi_{n}:= \eta_{n} \theta_{n}$ is a standard mollifier on $\R^d \times \R$. Let $h \in C_0^{2}(U \times (0,T))$ be given. Then there exists a bounded open subset $V$ of $\R^d$ and $T_1, T_2 \in \R$ with $0<T_1<T_2$ such that
$$
\text{supp}(h) \subset V \times (T_1, T_2) \subset \overline{V} \times [T_1, T_2] \subset U \times (0,T).
$$
Take $N \in \N$ such that $f*\xi_N \in C_0^{\infty}(U \times (0,T))$ for all $f \in C_0^{\infty}(V \times (T_1,T_2))$.\\
Note that by \cite[Proposition 4.20]{BRE}, it holds
\begin{eqnarray*}
&&\partial_t(h * \xi_{\varepsilon}) = \partial_t  h * \xi_{\varepsilon}, \; \partial^2_t (h * \xi_{\varepsilon}) = \partial^2_t  h * \xi_{\varepsilon}, \; \partial_t \partial_i  (h * \xi_{\varepsilon}) = \partial_t \partial_i  h * \xi_{\varepsilon},  \\
&& \partial_i (h * \xi_{\varepsilon}) = \partial_i  h * \xi_{\varepsilon}, \,\;\; \partial_i \partial_j (h * \xi_{\varepsilon}) =  \partial_i \partial_j  h * \xi_{\varepsilon}\, \text{ for any $1 \leq i,j \leq d$.}
\end{eqnarray*}
Hence by \cite[Proposition 4.21]{BRE}, $\lim_{n \rightarrow \infty}h *\xi_{\varepsilon} = h$ in $C^2(\overline{U} \times [0,T])$. 
Thus given $\varepsilon>0$, there exists $n_{\varepsilon} \in \N$ with $n_{\varepsilon} \geq N$ such that
$$
\|h - h* \xi_{n_{\varepsilon}}\|_{C^2(\overline{U} \times [0,T])} < \frac{\varepsilon}{2}.
$$
Let $\mathcal{R}$ be as in Step 1. By \eqref{step1fr}, there exists $h_{\varepsilon} \in \mathcal{R} \subset  C_0^{\infty}(V \times (T_1,T_2))$ such that
$$
\|h-h_{\varepsilon}\|_{C(\overline{U} \times [0,T])} < \frac{\varepsilon}{2 \|\xi_{n_{\varepsilon}}\|_{C^2(\overline{U} \times [0,T])}}.
$$
Thus using \cite[Proposition 4.20]{BRE} and Young's inequality,
$$
\|h * \xi_{n_{\varepsilon}} - h_{\varepsilon}* \xi_{n_{\varepsilon}}\|_{C^2(\overline{U} \times [0,T])} \leq \|\xi_{n_{\varepsilon}}\|_{C^2(\overline{U} \times [0,T])}\|h-h_{\varepsilon}\|_{C(\overline{U} \times [0,T])}< \frac{\varepsilon}{2}.
$$
Therefore
$$
\|h-h_{\varepsilon}* \xi_{n_{\varepsilon}}\|_{C^2(\overline{U} \times [0,T])} <\varepsilon.
$$
Since $h_{\varepsilon}* \xi_{n_{\varepsilon}} \in \mathcal{S}$, we have $h \in  \overline{\mathcal{S}}|_{C^2(\overline{U} \times [0,T])}$, as desired. 
\end{proof}

\begin{lemma} \label{lem7.3}
Let $A=(a_{ij})_{1 \leq i,j \leq d}$ be a (possibly non-symmetric) matrix of measurable functions on $\mathbb{R}^d$ such that there exist constants $\lambda>0$ and $M>0$, such that for all $\xi = (\xi_1, \dots, \xi_d) \in \R^d$, and a.e. $x \in \mathbb{R}^d$ 
$$
\quad \sum_{i,j=1}^{d} a_{ij}(x) \xi_i \xi_j \geq \lambda\|\xi\|^2, \quad \max_{1 \leq i, j \leq d} | a_{ij}(x)   | \leq M.
$$
Let $r \in (1, d)$ and $p \in (d, \infty)$ and $U$ be an open ball in $\R^d$. Assume that $a_{ij} \in VMO$, $1 \leq i,j \leq d$. Let $\widetilde{\mathbf{B}} \in L^{d}(U, \mathbb{R}^d)$ if $d \geq 3$ and $\widetilde{\mathbf{B}}\in L^{p}(U, \mathbb{R}^d)$ if $d=2$, and $\widetilde{f} \in L^{r}(U)$, $\widetilde{\mathbf{F}} \in L^{\frac{dr}{d-r}}(U, \mathbb{R}^d)$. 
Consider the following variational equation, to be satisfied for a sufficiently regular weakly differentiable function $u$:
\begin{align} \label{varidenpd}
\int_{U} \langle A \nabla u, \nabla \varphi \rangle dx  + \int_{U} \langle \widetilde{\mathbf{B}}, \nabla u \rangle \varphi dx    = \int_{U} \widetilde{f} \varphi dx+ \int_{U} \langle\widetilde{\mathbf{F}}, \nabla \varphi \rangle dx 
\quad \text{ for all $\varphi \in C_0^{\infty}(U)$}.
\end{align}
Then, the following hold:
\begin{itemize}
\item[(i)]
There exists a unique $u \in H^{1,\frac{dr}{d-r}}_0(U)$ solving \eqref{varidenpd} and a constant $C_1>0$ independent of $\widetilde{f}$ and $\widetilde{\mathbf{F}}$ such that
\begin{equation*} \label{estim}
\| \nabla u \|_{L^{\frac{dr}{d-r}}(U)} \leq C_1 \big( \| \widetilde{f}\|_{L^{r}(U)} + \| \widetilde{\mathbf{F}}\|_{L^{\frac{dr}{d-r}}(U)} \big).
\end{equation*}

\item[(ii)]
Assume that $\widetilde{\mathbf{F}}=0$, $\widetilde{f} \in L^p(U)$, ${\rm div} A \in L^p(U, \mathbb{R}^d)$ and $\widetilde{B} \in L^p(U, \mathbb{R}^d)$ (for any $d\ge 2$). Then, there exists a unique $u \in H^{1,2}_0(U) \cap H^{2,p}(U)$ solving
\eqref{varidenpd} and it holds
$$
\| u \|_{H^{2,p}(U)} \leq C_2 \| \widetilde{f}\|_{L^{p}(U)},
$$
where $C_2>0$ is a constant independent of $\widetilde{f}$. 
\end{itemize}
\end{lemma}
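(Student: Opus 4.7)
Set $q := \tfrac{dr}{d-r}$, the Sobolev dual exponent matching the data spaces $L^r$ (for $\widetilde{f}$) and $L^q$ (for $\widetilde{\mathbf{F}}$ and for $\nabla u$) via $L^r(U)\hookrightarrow W^{-1,q}(U)$. For part (i), I would first extend $A$ from a neighbourhood of $\overline{U}$ to all of $\mathbb{R}^d$ via Proposition~\ref{vmoprop}(i), preserving VMO, boundedness and ellipticity, so that the standard global $W^{1,q}$-theory on balls applies directly. The baseline problem with $\widetilde{\mathbf{B}}\equiv0$ is then governed by the $W^{1,q}$-solvability theory for divergence-form elliptic equations with VMO principal part developed in \cite{DK11}: there is a unique $v\in H^{1,q}_0(U)$ solving $-\text{div}(A\nabla v)=\widetilde{f}-\text{div}\,\widetilde{\mathbf{F}}$ together with the estimate $\|\nabla v\|_{L^q(U)}\le C_0(\|\widetilde{f}\|_{L^r(U)}+\|\widetilde{\mathbf{F}}\|_{L^q(U)})$. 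The $L^r$-source enters here through the Sobolev embedding $W^{1,q'}_0(U)\hookrightarrow L^{r'}(U)$, which allows $\widetilde{f}$ to be represented as a divergence of an $L^q$-vector field of comparable norm.

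The first-order drift term is incorporated by a Fredholm argument. H\"older's inequality gives $\|\langle\widetilde{\mathbf{B}},\nabla u\rangle\|_{L^r(U)}\le \|\widetilde{\mathbf{B}}\|_{L^d(U)}\|\nabla u\|_{L^q(U)}$ for $d\ge 3$ (with the stronger $L^p$-bound handling $d=2$), so the variational equation \eqref{varidenpd} is equivalent to $(I+T)u=u_0$, where $T$ sends $u\in H^{1,q}_0(U)$ to the baseline solution for source $-\langle\widetilde{\mathbf{B}},\nabla u\rangle$ and $u_0$ is the baseline solution for the data $(\widetilde{f},\widetilde{\mathbf{F}})$. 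A short interior integrability gain for $Tu$ (standard bootstrap within the VMO framework) combined with Rellich compactness makes $T$ compact on $H^{1,q}_0(U)$, so by the Fredholm alternative both existence and the desired a priori estimate in (i) reduce to showing $\ker(I+T)=\{0\}$.

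This uniqueness step is the main obstacle: one has to show that any $u\in H^{1,q}_0(U)$ with $-\text{div}(A\nabla u)+\langle\widetilde{\mathbf{B}},\nabla u\rangle=0$ vanishes identically. The plan is to invoke the local boundedness and H\"older continuity theory for divergence-form elliptic equations with VMO leading coefficients and drift in $L^d$ (respectively $L^p$ for $d=2$), giving a continuous representative of $u$ on $\overline{U}$; the boundary condition $u=0$ on $\partial U$, combined with a Stampacchia-type weak maximum principle for this class of operators, then forces $u\equiv 0$. The final estimate in (i) assembles from the baseline bound, the H\"older bound for the drift term, and the operator norm of $(I+T)^{-1}$.

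For part (ii), the hypothesis $\text{div}\,A\in L^p(U,\mathbb{R}^d)$ allows one to rewrite \eqref{varidenpd} (with $\widetilde{\mathbf{F}}=0$) in non-divergence form
\[
-\sum_{i,j=1}^{d} a_{ij}\,\partial_i\partial_j u + \langle \widetilde{\mathbf{B}}-\text{div}\,A,\,\nabla u\rangle = \widetilde{f}\quad\text{a.e.\ in } U,
\]
with zero Dirichlet data on $\partial U$. Since $\widetilde{\mathbf{B}}-\text{div}\,A\in L^p(U,\mathbb{R}^d)$ with $p>d$, $\widetilde{f}\in L^p(U)$, and $A\in VMO\cap L^\infty$ is elliptic, the $W^{2,p}$-solvability theory for non-divergence form elliptic operators with VMO principal part of \cite{KK19} yields a unique $u\in H^{2,p}(U)\cap H^{1,p}_0(U)$ with $\|u\|_{H^{2,p}(U)}\le C\|\widetilde{f}\|_{L^p(U)}$. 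Morrey's embedding $H^{2,p}(U)\hookrightarrow C^{1,\alpha}(\overline{U})$ with $\alpha=1-d/p$ then ensures $u\in H^{1,2}_0(U)$, and by integration by parts this $u$ is automatically a solution of \eqref{varidenpd} in the sense of (i) (for any $r<d$ with $dr/(d-r)\ge p$), so uniqueness from part (i) identifies it with the solution claimed in (ii).
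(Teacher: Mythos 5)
Your reduction of the $L^r$-source to divergence-form data and the overall two-step structure are in the spirit of the paper, but both parts of your plan contain genuine gaps where you in effect assume the results that the paper imports from \cite{KK19} and \cite{DK11}. In part (i), the paper solves a Poisson problem to write $\widetilde{f}$ as $-\mathrm{div}\,\widetilde{\mathbf{G}}$ and then cites \cite[Theorem 2.1(i)]{KK19}, which already gives unique solvability in $H^{1,\frac{dr}{d-r}}_0(U)$ for the divergence-form operator \emph{including} the drift $\widetilde{\mathbf{B}}\in L^d$ (critical case, $d\ge 3$). Your Fredholm scheme tries to re-derive exactly this, and it breaks at two points. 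First, the operator $T$ is not compact on $H^{1,q}_0(U)$, $q=\frac{dr}{d-r}$: by H\"older, $\langle\widetilde{\mathbf{B}},\nabla u\rangle$ lands precisely in $L^r$, which the baseline solver maps back to $H^{1,q}_0$ with no gain of integrability — the scaling is critical, so there is no ``standard bootstrap'' producing compactness (one would instead have to split $\widetilde{\mathbf{B}}$ into a bounded part plus a part of small $L^d$-norm and argue by smallness plus compact perturbation). Second, and more seriously, the kernel-triviality step is the hard core: for drift merely in $L^d$ the Stampacchia/Trudinger-type weak maximum principles you invoke (e.g. \cite{T77}, \cite[Theorem 2.1.8]{BKRS}, or \cite{Gilbarg}) require drift integrability strictly above the critical exponent or a smallness condition; uniqueness with $\widetilde{\mathbf{B}}\in L^d$ is precisely the nontrivial content of \cite[Theorem 2.1(i)]{KK19}, so your plan presupposes what it is supposed to prove.

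In part (ii) the citation and the solvability input are off. \cite{KK19} is a divergence-form $W^{1,p}$-theory and does not provide $W^{2,p}$-solvability for non-divergence operators; the relevant tool is \cite[Theorem 8]{DK11}, and that theorem requires a zeroth-order term $\lambda_0 u$ and does not directly accommodate the unbounded drift $\widetilde{\mathbf{B}}-\mathrm{div}A\in L^p$ as a coefficient. This is why the paper proceeds by bootstrap: it first uses (i) plus Morrey to get $u\in C(\overline{U})$ with $\nabla u$ in an intermediate $L^t$, moves all first-order terms to the right-hand side, solves the auxiliary equation $-\mathrm{trace}(A\nabla^2 w)+\lambda_0 w=\widetilde{g}$ by \cite[Theorem 8]{DK11}, identifies $u=w$ through the weak maximum principle (applied at a stage where the data are integrable enough), deduces $\nabla u\in L^\infty$, and only then reapplies \cite[Theorem 8]{DK11} to reach $H^{2,p}$ with the stated estimate. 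Your single appeal to a non-divergence $W^{2,p}$ theorem with $L^p$ drift and zero-order term absent skips exactly these steps; your idea of identifying the strong solution with the weak solution via the uniqueness in (i) is a reasonable variant of the paper's maximum-principle identification, but it cannot substitute for the missing existence argument for the non-divergence problem.
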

\begin{proof}
(i) Using \cite[Theorem 9.15, Lemma 9.17]{Gilbarg}, there exists $\widetilde{u} \in H^{2,r}(U) \cap H^{1,r}_0(U)$ such that $-\Delta \widetilde{u} = \widetilde{f}$ in $U$ and that
$$
\|\widetilde{u}\|_{H^{2,r}(U)} \leq C_1 \|\widetilde{f}\|_{L^r(U)},
$$
where $C_1>0$ is a constant independent of $\widetilde{f}$. Using Sobolev's embedding, 
\begin{equation} \label{w1restim2}
\|\widetilde{u}\|_{H^{1, \frac{dr}{d-r}}(U)} \leq C_2 \|\widetilde{f}\|_{L^r(U)},
\end{equation}
where $C_2>0$ is a constant independent of $\widetilde{f}$. Let $\widetilde{\mathbf{G}}:= \nabla \widetilde{u}$. Then  $\widetilde{\mathbf{G}}\in L^{\frac{dr}{d-r}}(U,\R^d)$ and by \cite[Theorem 2.1(i)]{KK19} there exists a unique $u \in H^{1, \frac{dr}{d-r}}_0(U)$ such that
\begin{align*}
\int_{U} \langle A \nabla u, \nabla \varphi \rangle dx  + \int_{U} \langle \widetilde{\mathbf{B}}, \nabla u \rangle \varphi dx   &= \int_{U} \langle \widetilde{\mathbf{G}}, \nabla \varphi \rangle dx+ \int_{U} \langle \widetilde{\mathbf{F}}, \nabla \varphi \rangle dx\\
&=\int_{U} \widetilde{f} \varphi dx+ \int_{U} \langle\widetilde{\mathbf{F}}, \nabla \varphi \rangle dx \quad \text{ for all $\varphi \in C_0^{\infty}(U)$}.
\end{align*}
Indeed, to check  that the conditions of \cite[Theorem 2.1(i)]{KK19} are satisfied is straightforward for $d\ge 3$, and for $d=2$, we note that $\widetilde{\mathbf{B}} \in L^{2+\varepsilon}(U,\R^d)$ for any $\varepsilon>0$. Thus, choosing $\varepsilon>0$ small enough, we can achieve $\frac{2+\varepsilon}{2+\varepsilon-1}<\frac{2r}{2-r}$ for any given $r\in (1,2)$.
Moreover,  by \cite[Theorem 2.1(i)]{KK19} and \eqref{w1restim2}, we get
\begin{align*}
\| \nabla u \|_{L^{\frac{dr}{d-r}}(U)} &\leq C_3 \big ( \|\widetilde{\mathbf{G}}\|_{L^{\frac{dr}{d-r}}(U)}  + \|  \widetilde{\mathbf{F}}\|_{L^{\frac{dr}{d-r}}(U)} \big )\\
& \leq C_4  \big( \| \widetilde{f}\|_{L^r(U)} + \| \widetilde{\mathbf{F}} \|_{L^{\frac{dr}{d-r}}(U)} \big),
\end{align*}
where $C_3, C_4>0$ are constants independent of $\widetilde{f}$ and $\widetilde{\mathbf{F}}$, as desired.\\
\noindent
(ii) Since $\frac{3}{2d}-\frac{1}{2p} \in (\frac{1}{d}, 1)$, $\widetilde{f} \in L^p(U) \subset L^{\left( \frac{3}{2d}- \frac{1}{2p} \right)^{-1}}(U)$. By (i) and Morrey's inequality, there exists a unique $u \in H_0^{1,\left( \frac{1}{2d}- \frac{1}{2p} \right)^{-1}}(U) \cap C(\overline{U})$ such 
\eqref{varidenpd} holds and that
\begin{equation} \label{intestim}
\|u\|_{L^{\infty}(U)} \leq  C_5 \| \nabla u \|_{L^{\left( \frac{1}{2d}- \frac{1}{2p} \right)^{-1}}(U)} \leq C_6 \| \widetilde{f}\|_{L^{p}(U)},
\end{equation}
where $C_5>0$ is a constant independent of $\widetilde{f}$. Let $\widetilde{h}:= \langle {\rm div} A, \nabla u  \rangle    
-\langle \widetilde{\mathbf{B}}, \nabla u \rangle$. Since $\nabla u \in L^{\left( \frac{1}{2d}- \frac{1}{2p} \right)^{-1}}(U)$, ${\rm div} A \in L^p(U, \mathbb{R}^d)$ and $\widetilde{\mathbf{B}} \in L^{p}(U, \mathbb{R}^d)$, we get $\widetilde{h} \in L^{(\frac{1}{2d}+\frac{1}{2p})^{-1}}(U)$ and $\langle {\rm div} A, \nabla u \rangle  \in L^{(\frac{1}{2d}+\frac{1}{2p})^{-1}}(U)$. 
Let $\lambda_0>0$ be the constant independent of $\widetilde{f}$ as in \cite[Theorem 8]{DK11} applied with  $L = \sum_{i,j=1}^d a_{ij}\partial_i \partial_j$ there.
Let $\widetilde{g}:=\widetilde{f}+\lambda_0 u  +\widetilde{h}$. Then $\widetilde{g} \in L^{(\frac{1}{2d}+\frac{1}{2p})^{-1}}(U)$ and by \cite[Theorem 8]{DK11}, there exists a unique $w \in H^{2, \left( \frac{1}{2d}+\frac{1}{2p}  \right)^{-1}}(U) \cap H^{1,2}_0(U)$  such that
\begin{equation} \label{nondivou}
-\text{trace}(A \nabla ^2 w) +\lambda_0 w = \widetilde{g} \quad \text{ on $U$},
\end{equation}
and that by \eqref{intestim}
$$
\|w\|_{H^{2, (\frac{1}{2d}+\frac{1}{2p})^{-1}}(U)} \leq C_{7} \|\widetilde{g}\|_{L^{(\frac{1}{2d}+\frac{1}{2p})^{-1}}(U)} \leq C_{8} \|\widetilde{f}\|_{L^p(U)},
$$
where $C_{7}, C_{8}>0$ are constants independent of $\widetilde{f}$. Since $(\frac{1}{2d}+\frac{1}{2p})^{-1}>d$, using Sobolev's embedding, $\nabla w \in L^{\infty}(U)$ and 
\begin{equation} \label{bddgrades}
\| \nabla w\|_{L^{\infty}(U)} \leq C_{9} \|\widetilde{f}\|_{L^p(U)},
\end{equation}
where $C_{9}>0$ is a constant independent of $\widetilde{f}$. Now note that from \eqref{nondivou} and integration by parts
$$
\int_{U} \langle A \nabla w, \nabla \varphi \rangle dx  + \int_{U} \langle {\rm div} A, \nabla w \rangle \varphi dx +\lambda_0 \int_{U} w \varphi dx  = \int_{U} \widetilde{g}\varphi dx, \quad \text{ for all $\varphi \in C_0^{\infty}(U)$}.
$$
From \eqref{varidenpd} we get
$$
\int_{U} \langle A \nabla u, \nabla \varphi \rangle dx  + \int_{U} \langle {\rm div} A, \nabla u \rangle \varphi dx +\lambda_0 \int_{U} u \varphi dx  = \int_{U} \widetilde{g}\varphi dx, \quad \text{ for all $\varphi \in C_0^{\infty}(U)$}.
$$
Using a weak maximum principle (\cite[Theorem 2.1.8]{BKRS}, see \cite[Theorem 4]{T77} for the original result), $u = w$ on $U$. Thus, $\nabla u =\nabla w \in L^{\infty}(U, \mathbb{R}^d)$, hence $\phi \in L^p(U)$. Moreover, using \eqref{intestim}, \eqref{bddgrades} and applying \cite[Theorem 8]{DK11} to \eqref{nondivou} again, we get $u=w \in H^{2,p}(U)$ and
\begin{align*}
&\|u \|_{H^{2,p}(U)} \leq C_{10} \|\widetilde{g}\|_{L^p(U)}  \\
&\leq C_{10} \big(  \|\widetilde{f}\|_{L^p(U)}+  \lambda_0 \|u\|_{L^p(U)}  +(\|{\rm div} A \|_{L^p(U)} + \| \widetilde{B} \|_{L^p(U)}) \|\nabla u \|_{L^{\infty}(U)} \big) \\
& \leq
C_{11} \|\widetilde{f}\|_{L^p(U)},
\end{align*}
where $C_{10}, C_{11}>0$ are constants independent of $\widetilde{f}$, as desired.
\end{proof}

\begin{remark}
In the situation of Lemma \ref{lem7.3}(ii), one may directly use \cite[Theorem 2.2]{KK19} to obtain the same conclusion of Lemma \ref{lem7.3}(ii) if the conditions, $B \in L^p(U, \mathbb{R}^d)$ and ${\rm div} A \in L^p(U, \mathbb{R}^d)$ in Lemma \ref{lem7.3}(ii) are replaced by $B \in L^{p+\varepsilon}(U, \mathbb{R}^d)$ and ${\rm div} A \in L^{p+\varepsilon}(U, \mathbb{R}^d)$ for some $\varepsilon>0$. But for our main result, with optimal conditions on the coefficients, we will use Lemma \ref{lem7.3}(ii) (which is nonetheless derived on the basis of results from \cite{KK19}) with $p = d+1$ rather than \cite[Theorem 2.2]{KK19}. 
\end{remark}

\centerline{}

\begin{theorem} \label{mainregulthm}
Let $A=(a_{ij})_{1 \leq i,j \leq d}$ be a (possibly non-symmetric)  matrix of measurable functions on $\mathbb{R}^d$ such that there  exist constants $\lambda>0$ and $M>0$, such that for all $\xi = (\xi_1, \dots, \xi_d) \in \R^d$, and a.e. $x \in \mathbb{R}^d$ it holds
$$
\quad \sum_{i,j=1}^{d} a_{ij}(x) \xi_i \xi_j \geq \lambda\|\xi\|^2, \quad \max_{1 \leq i, j \leq d} | a_{ij}(x)   | \leq M.
$$
Let $p \in (d, \infty)$ and $V$ be an open ball in $\R^d$. Let $\widetilde{\mathbf{B}} \in L^p(V, \mathbb{R}^d)$, $\widetilde{c} \in L^\frac{pd}{p+d}(V)$ and $\widetilde{f} \in L^{\frac{pd}{p+d}}(V)$, $\widetilde{\mathbf{F}} \in L^{p}(V, \mathbb{R}^d)$. Let $u \in H^{1,2}(V)$ be such that
\begin{align} \label{undevaride}
\int_{V} \langle A \nabla u, \nabla \varphi \rangle dx  + \int_{V} \langle\widetilde{\mathbf{B}}, \nabla u \rangle \varphi dx + \int_{V} \widetilde{c} u \varphi dx   = \int_{V} \widetilde{f} \varphi dx + \int_{V} \langle \widetilde{\mathbf{F}},\nabla \varphi \rangle dx \quad \text{ for all $\varphi \in C_0^{\infty}(V)$}.
\end{align}
Let $B$ be an open ball in $\mathbb{R}^d$ with $\overline{B} \subset V$. Then the following hold:
\begin{itemize}

\item[(i)]
There exists a constant $C_1>0$ independent of $\widetilde{f}$ and $\widetilde{\mathbf{F}}$ such that
$$
\|u\|_{H^{1,2}(B)} \leq C_1 \big( \|u\|_{L^1(V)}  + \| \widetilde{f} \|_{L^{r}(V)} + \| \widetilde{\mathbf{F}}\|_{L^2(V)}  \big),
$$
where $r = \frac{2d}{2+d}$ if $d \geq 3$ and $r \in (1,2)$ can be arbitrarily chosen if $d=2$.

\item[(ii)]
There exist constants $C>0$ and $\gamma \in (0,1)$ independent of $\widetilde{f}$ and $\widetilde{\mathbf{F}}$ such that $u \in C^{0, \gamma}(\overline{B})$ and
$$
\|u\|_{C^{0, \gamma}(\overline{B})} \leq C \big (  \|u\|_{L^1(V)}  + \| \widetilde{f} \|_{L^{\frac{pd}{p+d}}(V)}  +  \|\widetilde{\mathbf{F}}\|_{L^p(V)}  \big ).
$$
\item[(iii)]
If $a_{ij} \in VMO$, $1 \leq i,j \leq d$, then $u \in H_{loc}^{1,p}(V)$ and there exists a constant $C_1>0$ independent of $\widetilde{f}$ such that
$$
\|u\|_{H^{1,p}(B)} \leq C_1 \big( \|u\|_{L^1(V)}  + \| \widetilde{f}\|_{L^{\frac{pd}{p+d}}(V)}   + \| \widetilde{\mathbf{F}}\|_{L^p(V)}   \big ),
$$
where $C_1>0$ is a constant independent of $\widetilde{f}$ and $\widetilde{\mathbf{F}}$.

\item[(iv)]
If $a_{ij} \in VMO$, $1 \leq i,j \leq d$, $\widetilde{\mathbf{F}}=0$, ${\rm div} A \in L^p(V, \mathbb{R}^d)$ and $\widetilde{c} \in L^p(V)$,  then $u \in H_{loc}^{2,p}(V)$ and there exists a constant $C_2>0$ independent of $f$ such that
$$
\|u\|_{H^{2,p}(B)} \leq C_2 \big ( \|u\|_{L^1(V)}  + \| \widetilde{f}\|_{L^p(V)}     \big ),
$$
where $C_2>0$ is a constant independent of $\widetilde{f}$.
\end{itemize}
\end{theorem}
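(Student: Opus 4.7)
The strategy is uniform across the four parts: localize by a smooth cutoff, reducing each estimate to one with zero boundary values on an intermediate ball, and then invoke classical De Giorgi--Nash--Moser regularity (for (i) and (ii)) or the solvability results of Lemma~\ref{lem7.3} (for (iii) and (iv)). Fix concentric balls $B \subset W_1 \subset \overline{W}_1 \subset W_2 \subset \overline{W}_2 \subset V$ and a cutoff $\chi \in C_0^{\infty}(W_2)$ with $\chi \equiv 1$ on $\overline{W}_1$. Testing \eqref{undevaride} with $\chi \varphi$ for $\varphi \in C_0^{\infty}(W_2)$ and rearranging, $v := \chi u \in H^{1,2}_0(W_2)$ satisfies
\[
\int_{W_2} \langle A \nabla v, \nabla\varphi\rangle\,dx + \int_{W_2} \langle \widetilde{\mathbf{B}}, \nabla v\rangle \varphi\,dx = \int_{W_2} \hat f\, \varphi + \langle \hat{\mathbf{F}}, \nabla\varphi\rangle\,dx
\]
for all $\varphi \in C_0^{\infty}(W_2)$, where
\[
\hat f := \chi \widetilde f - \widetilde c\, v + \langle \widetilde{\mathbf{F}}, \nabla\chi\rangle - \langle A \nabla u, \nabla\chi\rangle + u\,\langle \widetilde{\mathbf{B}}, \nabla\chi\rangle, \qquad \hat{\mathbf{F}} := u\, A^T \nabla\chi + \chi\, \widetilde{\mathbf{F}},
\]
both supported in $\overline W_2$. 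This is precisely the form treated in Lemma~\ref{lem7.3}.

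For (i), the plan is to test \eqref{undevaride} with $\chi^2 u$ and use ellipticity together with Young's and H\"older's inequalities; the conditions $p > d$ and $pd/(p+d) > d/2$ (which follows from $p>d$) allow one to absorb the gradient terms coming from $\widetilde{\mathbf{B}} \cdot \nabla u$ and to control the lower-order term $\widetilde c\, u$ via Sobolev embedding. This yields a Caccioppoli-type inequality $\|\nabla u\|_{L^2(W_1)}^2 \lesssim \|u\|_{L^2(W_2)}^2 + (\text{data})^2$, and a finite Moser iteration on nested balls then replaces $\|u\|_{L^2(W_2)}$ by $\|u\|_{L^1(V)}$ on the right. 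For (ii), the plan is to apply the classical De Giorgi--Nash--Moser theorem for divergence-form equations (cf.\ \cite[\S 8.5--8.6]{Gilbarg}), which is valid under the present integrability hypotheses; a further Moser-type $L^2$-to-$L^1$ reduction then yields the bound in terms of $\|u\|_{L^1(V)}$.

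For (iii), apply Lemma~\ref{lem7.3}(i) to the equation for $v$ on $W_2$ with $r = pd/(p+d) \in (1, d)$; a direct check shows $dr/(d-r) = p$, so this produces a solution $\tilde v \in H^{1,p}_0(W_2)$ of the same equation. Since $v, \tilde v \in H^{1,2}_0(W_2)$ both solve that divergence-form equation, a standard uniqueness argument (subtract and test with $v-\tilde v$, using the weak maximum principle, e.g.\ \cite[Theorem 2.1.8]{BKRS}) yields $v = \tilde v$, hence $u = v \in H^{1,p}(W_1) \supset H^{1,p}(B)$ with the quantitative bound of Lemma~\ref{lem7.3}(i), which after estimating $\|\hat f\|_{L^{pd/(p+d)}(W_2)}$ and $\|\hat{\mathbf{F}}\|_{L^p(W_2)}$ in terms of the data gives the claimed estimate. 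For (iv), the plan is identical but uses Lemma~\ref{lem7.3}(ii): the extra hypotheses $\widetilde{\mathbf{F}} = 0$, ${\rm div}\, A \in L^p(V, \R^d)$ and $\widetilde c \in L^p(V)$ are exactly what is needed so that $\hat f \in L^p(W_2)$ and $\hat{\mathbf{F}} = u\, A^T \nabla\chi \in L^p(W_2)$.

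The principal obstacle is estimating the commutator terms $\langle A \nabla u, \nabla\chi\rangle$ and $u\,\langle \widetilde{\mathbf{B}}, \nabla\chi\rangle$ in $\hat f$: their integrability is limited a priori by $\nabla u \in L^2$ and $u \in L^2$ from (i), which is insufficient to reach $L^{pd/(p+d)}$ (for (iii)) or $L^p$ (for (iv)) when $p$ is large. The plan is to resolve this through a finite bootstrap on a chain of nested balls $B \subset B_0 \subset \cdots \subset B_N \subset W_1$: parts (i) and (ii), applied on $B_N$ with $V$ replaced by $W_1$, provide $u \in L^{\infty}(B_{N-1})$ and $\nabla u \in L^2(B_{N-1})$, and Lemma~\ref{lem7.3}(i), applied iteratively with Sobolev exponents $q_{j+1} = dq_j/(d-q_j)$ as long as $q_j < d$, upgrades $\nabla u$ to $L^{q_j}(B_{N-j})$ with $q_j$ as large as needed in finitely many steps. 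Once $\nabla u$ lies in a sufficiently large Lebesgue space on $W_1$, the data of the equation for $v$ fit the hypotheses of Lemma~\ref{lem7.3}, completing the proof.
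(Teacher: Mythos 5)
Parts (i)--(iii) of your plan are essentially the paper's own route. For (i) and (ii) the paper simply quotes classical local estimates (a variant of \cite[Theorem 1.7.4]{BKRS} for the $H^{1,2}$ bound and Stampacchia's H\"older result \cite{St65}, after replacing $\widetilde f$ by a potential $\nabla\widetilde u$ with $-\Delta\widetilde u=\widetilde f$), while you re-derive them by Caccioppoli/Moser and De Giorgi--Nash--Moser from \cite{Gilbarg}; under the stated integrability assumptions ($\widetilde{\mathbf{B}}\in L^p$, $\widetilde c,\widetilde f\in L^{pd/(p+d)}$ with $pd/(p+d)>d/2$, $\widetilde{\mathbf{F}}\in L^p$, $p>d$) this is an equivalent classical path. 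For (iii), your localization $v=\chi u$, the application of Lemma \ref{lem7.3}(i) with $r=pd/(p+d)$ (so that $dr/(d-r)=p$), and the finite bootstrap along the Sobolev chain $q_{j+1}=dq_j/(d-q_j)$ on nested balls is precisely the paper's induction with exponents $\left(\tfrac12-\tfrac md\right)^{-1}$; your explicit identification of $\chi u$ with the unique solution produced by the lemma (subtract and use the weak maximum principle \cite[Theorem 2.1.8]{BKRS}) is a point the paper leaves implicit, so you are, if anything, more careful there.

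In (iv), however, there is a genuine gap as written. Lemma \ref{lem7.3}(ii) is stated only for $\widetilde{\mathbf{F}}=0$, i.e.\ for a right-hand side consisting of a function alone, because the $H^{2,p}$ input \cite[Theorem 8]{DK11} is a non-divergence-form result; but your localized equation for $\chi u$ still carries the divergence-form datum $\widehat{\mathbf{F}}=u\,A\nabla\chi$ (nonzero even though $\widetilde{\mathbf{F}}=0$ in the hypotheses of (iv)), so ``apply Lemma \ref{lem7.3}(ii) since $\widehat{\mathbf{F}}\in L^p$'' is not licensed by the lemma. The missing step — and the reason ${\rm div}A\in L^p(V,\R^d)$ is assumed beyond its use inside Lemma \ref{lem7.3}(ii) — is to first integrate this term by parts: since ${\rm div}A\in L^p$, $u\in L^\infty_{loc}$ and $\nabla u\in L^p_{loc}$ by part (iii), and $\chi$ is smooth, one has
\begin{equation*}
\int_U \langle u\,A\nabla\chi,\nabla\phi\rangle\,dx=-\int_U \big(\langle \nabla u, A\nabla\chi\rangle+u\,\mathrm{div}(A\nabla\chi)\big)\phi\,dx ,
\end{equation*}
which converts $\widehat{\mathbf{F}}$ into an additional $L^p$ zero-order datum; only then does Lemma \ref{lem7.3}(ii) apply and yield the $H^{2,p}$ bound, exactly as in the paper. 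With this one correction your argument coincides with the paper's. (A minor slip elsewhere: the cutoff term in $\widehat{\mathbf{F}}$ is $u\,A\nabla\chi$, not $u\,A^{T}\nabla\chi$; this is immaterial for the estimates.)
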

\begin{proof}
(i) The assertion directly follows from a straightforward modification of \cite[Theorem 1.7.4]{BKRS}, since for the proof of \cite[Theorem 1.7.4]{BKRS}, the assumption that $A$ is symmetric and that $a_{ij} \in VMO(U)$ is not needed to obtain the $H^{1,2}$-estimate.\\
(ii) Let $U$ be an open ball  in $\mathbb{R}^d$ such that $\overline{B} \subset U \subset \overline{U} \subset V$.  As in the proof of Lemma \ref{lem7.3}(i), there exists $\widetilde{u} \in H^{2,\frac{pd}{p+d}}(U) \cap H^{1,\frac{pd}{p+d}}_0(U)$ such that $-\Delta \widetilde{u} = \widetilde{f}$ in $U$. Thus for $\widetilde{\mathbf{G}}:=\nabla \widetilde{u}$\, it holds $\widetilde{\mathbf{G}}\in L^p(U, \mathbb{R}^d)$,
$$
\int_{U}\langle\widetilde{\mathbf{G}} , \nabla \varphi \rangle dx = \int_{U} \widetilde{f} \varphi dx, \quad \text{ for all $\varphi \in C_0^{\infty}(U)$},
$$
and 
$$
\|\widetilde{\mathbf{G}} \|_{L^p(U)} \leq \widetilde{C}_1 \| \widetilde{f} \|_{L^{\frac{pd}{p+d}}(U)},
$$
where $\widetilde{C}_1>0$ is a constant independent of$\widetilde{f}$. Using the above estimate for $\widetilde{\mathbf{G}}$, (i), and the H\"{o}lder regularity result (\cite[Théorème 7.2]{St65}, cf. the proof of \cite[Theorem 2]{LT19de} to see how \cite[Théorème 7.2]{St65} is applied), we get $u \in C^{0, \gamma}(\overline{B})$ and
\begin{align*}
\|u\|_{C^{0, \gamma}(\overline{B})} &\leq  \widetilde{C}_2 \big(  \|u\|_{L^1(U)}   +  \| \widetilde{\mathbf{G}}+ \widetilde{\mathbf{F}}\|_{L^p(U)} \big) \\
 &\leq C \big (  \|u\|_{L^1(V)}  + \|\widetilde{f}\|_{L^{\frac{pd}{p+d}}(V)}  +  \|\widetilde{\mathbf{F}}\|_{L^p(V)}  \big ),
\end{align*}
where $C_1, C>0$ are constants independent of $\widetilde{f}$ and $\widetilde{\mathbf{F}}$.\\
(iii) Let $U$ be an open ball  in $\mathbb{R}^d$ such that $\overline{B} \subset U \subset \overline{U} \subset V$. Then, applying (ii) for the open ball $U$, we obtain that $u \in C(\overline{U}) \subset L^{\infty}(U)$ and that
$$
\|u\|_{C^{0, \gamma}(\overline{U})} \leq C \Big (  \|u\|_{L^1(V)}  + \| \widetilde{f} \|_{L^{\frac{pd}{p+d}}(V)}  +  \|\widetilde{\mathbf{F}}\|_{L^p(V)}  \Big ).
$$
Let $\zeta \in C_0^{\infty}(U)$ be a fixed with $\zeta=1$ on $B$ and let $\phi \in C_0^{\infty}(U)$ be arbitrary.
Then, 
\begin{align*}
&\int_{U} \langle A \nabla u, \nabla (\zeta \phi) \rangle dx = \int_{U} \langle A \nabla u, \nabla \phi  \rangle \zeta dx +  \int_{U} \langle A \nabla u, \nabla \zeta  \rangle \phi dx \\
& =\int_{U} \langle A \nabla (u \zeta), \nabla \phi  \rangle dx  - \int_{U} \langle A \nabla \zeta, \nabla \phi \rangle u dx  +  \int_{U} \langle A \nabla u, \nabla \zeta  \rangle \phi dx.
\end{align*}
and
$$
\int_{U} \langle \widetilde{\mathbf{B}}, \nabla u \rangle \zeta \phi dx = \int_{U} \langle \widetilde{\mathbf{B}}, \nabla (u \zeta ) \rangle \phi dx - \int_{U} \langle \widetilde{\mathbf{B}}, \nabla \zeta \rangle u \phi dx.
$$
Therefore,  applying  \eqref{undevaride}  to $\varphi=\zeta \phi$, we obtain that
\begin{align} \label{ourpdere}
\int_{U} \langle A \nabla (\zeta u), \nabla \phi  \rangle dx+ \int_{U} \langle \widetilde{\mathbf{B}}, \nabla (\zeta u) \rangle \phi dx 
= \int_{U} \widehat{f} \phi dx +  \int_{U} \langle \widehat{\mathbf{F}}, \nabla \phi \rangle dx,
\end{align}
where $\widehat{f} = -\widetilde{c} \zeta u - \langle A \nabla u, \nabla \zeta  \rangle+\langle\widetilde{\mathbf{B}}, \nabla \zeta \rangle u + \widetilde{f} \zeta +\langle \widetilde{\mathbf{F}}, \nabla \zeta \rangle \in L^{\frac{pd}{p+d} \wedge 2}(U)$, \; $\widehat{\mathbf{F}} = uA \nabla \zeta +\zeta \widetilde{\mathbf{F}}\in L^p(U)$. \\
\noindent
If $\frac{1}{2} \leq \frac{1}{p}+\frac{1}{d}$, then $\widehat{f} \in  L^{\frac{pd}{p+d}}(U)$. Thus, by Lemma \ref{lem7.3}(i), we obtain that $\zeta u \in H^{1,p}_0(U)$ and by (i) and (ii)
\begin{align*}
\|\zeta u \|_{H^{1,p}(U)}  &\leq   C_3 \Big ( \| \widehat{f}\|_{L^{\frac{pd}{p+d}}(U)} + \| \widehat{\mathbf{F}} \|_{L^{p}(U)} \Big) \\
&\leq  C_4 \Big ( \|u\|_{L^1(V)}  + \| \widetilde{f}\|_{L^{\frac{pd}{p+d}}(V)}   + \| \widetilde{\mathbf{F}} \|_{L^p(V)}   \Big).
\end{align*}
Now consider the case where $\frac{1}{2} >\frac{1}{p}+\frac{1}{d}$. Then $d\ge 3$, and there exists a unique $k \in \mathbb{N}\cup \{ 0 \}$ such that 
\begin{equation*} 
\frac{1}{p}+\frac{1}{d}\in \Big [\frac{1}{2}-\frac{k+1}{d},\frac{1}{2}-\frac{k}{d}\Big ), \quad 
\text{hence}  \quad \frac{1}{p}\in \Big [\frac{1}{2}-\frac{k+2}{d},\frac{1}{2}-\frac{k+1}{d}\Big ).
\end{equation*}
Thus, $\widetilde{\mathbf{F}}\in L^p(U) \subset L^{(\frac{1}{2}-\frac{k+1}{d})^{-1}}(U)$ and $\widetilde{f} \in L^{2}(U)$ with $2\in (1,d)$. We claim that for each $m=1, \ldots, k+1$, $u \in H_{loc}^{1, \left( \frac{1}{2}-\frac{m}{d}  \right)^{-1}}(V)$ and for each open ball $B$ with $\overline{B} \subset V$,
$$
\|u \|_{ H^{1, \big ( \frac{1}{2}-\frac{m}{d}  \big)^{-1}}(B)} \leq   C_5 \Big( \|u\|_{L^1(V)}  + \| \widetilde{f}\|_{L^{\frac{pd}{p+d}}(V)}   + \| \widetilde{\mathbf{F}} \|_{L^p(V)}   \Big),
$$
where $C_5>0$ (in the following the notation for the constants $C_i$ should be checked on consistency) is a constant independent of $\widetilde{f}$ and $\widetilde{\mathbf{F}}$.
To show the claim, we use induction. Since $\frac{1}{p}<\frac{1}{2}-\frac{1}{d}$, $\widehat{\mathbf{F}}  \in L^p(U) \subset L^{\left(\frac{1}{2}-\frac{1}{d}\right)^{-1}}(U)$ and $\widehat{f} \in L^{2}(U)$. Applying Theorem \ref{mainregulthm}(ii) and Lemma \ref{lem7.3}(ii) to \eqref{ourpdere}, we obtain $\zeta u \in H^{1, \left( \frac{1}{2}-\frac{1}{d}  \right)^{-1}}(U)$ and
\begin{align*}
\|u \|_{H^{1, \big( \frac{1}{2}-\frac{1}{d}  \big)^{-1}}(B)} &\leq \|\zeta u \|_{H^{1, \big( \frac{1}{2}-\frac{1}{d}  \big)^{-1}}(U)}  \leq  C_5 \Big(  \|  \widehat{f}\|_{L^{2}(U)}     +      \| \widehat{\mathbf{F}}\|_{L^{\left(\frac{1}{2}-\frac{1}{d}\right)^{-1}}(U)} \Big)  \\
&\leq  C_6 \Big( \|u\|_{L^1(V)}  + \| \widetilde{f}\|_{L^{\frac{pd}{p+d}}(V)}   + \| \widetilde{\mathbf{F}}\|_{L^p(V)}   \Big).
\end{align*}
Since $B$ is arbitrary with $\overline{B} \subset V$, the claim holds for $m=1$. Now assume that the claim holds for some $m \in \{1,2, \ldots, k\}$. Note that $\frac{1}{2}-\frac{m}{d}>\frac{1}{p}+\frac{1}{d}$, hence $\frac{1}{2}-\frac{m+1}{d}>\frac{1}{p}$. Thus, 
$\widehat{\mathbf{F}}  \in L^p(U) \subset L^{\left( \frac{1}{2} - \frac{m+1}{d}   \right)^{-1}}(U)$ and $\widehat{f} \in L^{\left(  \frac{1}{2}-\frac{m}{d} \right)^{-1}}(U)$. Applying Theorem \ref{mainregulthm}(ii)  and Lemma \ref{lem7.3}(ii) to \eqref{ourpdere}, we obtain $\zeta u \in H^{1, \left( \frac{1}{2}-\frac{m+1}{d}  \right)^{-1}}(U)$ and
\begin{align*}
\|u \|_{H^{1, \left( \frac{1}{2}-\frac{m+1}{d}  \right)^{-1}}(B)} &\leq \|\zeta u \|_{H^{1, \left( \frac{1}{2}-\frac{m+1}{d}  \right)^{-1}}(U)}  \leq  C_7 \big(  \|  \widehat{f}\|_{L^{\left(\frac{1}{2}-\frac{m}{d}\right)^{-1}}(U)}    +      \| \widehat{\mathbf{F}}  \|_{L^{\left(\frac{1}{2}-\frac{m+1}{d}\right)^{-1}}(U)} \big)  \\
&\leq  C_8 \big( \|u\|_{L^1(V)}  + \| \widetilde{f}\|_{L^{\frac{pd}{p+d}}(V)}   + \|\widetilde{\mathbf{F}}\|_{L^p(V)}   \big).
\end{align*}
Since $B$ is arbitrary with $\overline{B} \subset V$, $u \in H_{loc}^{1, \left( \frac{1}{2}-\frac{m+1}{d}  \right)^{-1}}(V)$. Thus, the claim is shown. Therefore, we then obtain that $u \in H_{loc}^{1, \left( \frac{1}{2}-\frac{k+1}{d}  \right)^{-1}}(V)\subset
H_{loc}^{1, \left( \frac{1}{p}+\frac{1}{d} \right)^{-1}}(V)$ and
$$
\|u\|_{H^{1, \left( \frac{1}{p}+\frac{1}{d} \right)^{-1}}(U)} \leq  C_9 \big( \|u\|_{L^1(V)}  + \| \widetilde{f}\|_{L^{\frac{pd}{p+d}}(V)}   + \| \widetilde{\mathbf{F}}\|_{L^p(V)}   \big).
$$
Finally applying Theorem \ref{mainregulthm}(ii) and Lemma \ref{lem7.3}(ii) to \eqref{ourpdere} with $\widehat{f} \in L^{\left( \frac{1}{p}+\frac{1}{d} \right)^{-1}}(U)$ and $\widehat{\mathbf{F}} \in L^p(U, \mathbb{R}^d)$ the assertion follows.\\
(iv) Note that by Theorem \ref{mainregulthm}(iii), $u \in H_{loc}^{1,p}(V)$ and
$$
\|u\|_{H^{1,p}(B)} \leq C_{10} \big( \|u\|_{L^1(V)}  + \| \widetilde{f}\|_{L^{\frac{pd}{p+d}}(V)}      \big),
$$
where $C_{10}>0$ is a constant independent of $\widetilde{f}$. 
Using integration by parts in \eqref{ourpdere}, we obtain that
\begin{align*} \label{ourpdere2}
\int_{U} \langle A \nabla (\zeta u), \nabla \phi  \rangle dx+ \int_{U} \langle \widetilde{\mathbf{B}}, \nabla (\zeta u) \rangle \phi dx 
= \int_{U} \widehat{g} \phi dx,
\end{align*}
where 
$\widehat{g} = -\widetilde{c} \zeta u -\langle A \nabla u, \nabla \zeta  \rangle+\langle \widetilde{\mathbf{B}}, \nabla \zeta \rangle u + \widetilde{f} \zeta -\langle \nabla u,  A \nabla \zeta \rangle- u \cdot  \text{div}(A \nabla \zeta)  \in L^{p}(U)$.
Thus, applying Lemma \ref{lem7.3}(ii) and Theorem \ref{mainregulthm}(iii) 
to \eqref{ourpdere}, we obtain that $\zeta u \in H^{1,2}_0(U) \cap H^{2,p}(U)$ and that
\begin{align*}
\| \zeta u \|_{H^{2,p}(U)} \leq C_{11} \| \widehat{g}\|_{L^{p}(U)} \leq C_{12} \big( \|u\|_{L^1(V)}  + \| \widetilde{f}\|_{L^p(V)}     \big),
\end{align*}
where $C_{11}$, $C_{12}>0$ are constants independent of $\widetilde{f}$.
\end{proof}

\begin{theorem} \label{weoifo9iwjeof}
Let $V \times (0,T)$ be a bounded open subset in $\R^d\times \R$ with $T>0$.  Let $A=(a_{ij})_{1 \leq i,j \leq d}$ be a matrix of measurable functions on $V$ such that there  exist constants $\lambda>0$ and $M>0$, such that for all $\xi = (\xi_1, \dots, \xi_d) \in \R^d$, and a.e. $x \in V$ it holds
$$
\quad \sum_{i,j=1}^{d} a_{ij}(x) \xi_i \xi_j \geq \lambda\|\xi\|^2, \qquad \max_{1 \leq i, j \leq d} | a_{ij}(x)   | \leq M.
$$
Let $\widetilde{\mathbf{B}} \in L^{\widetilde{p}}(U,\mathbb{R}^d)$ and $\psi \in L^{\widetilde{q}}(U)$ with $\widetilde{p} \in (d, \infty)$ and $\widetilde{q} \in [2 \vee \frac{\widetilde{p}}{2},  \infty)$. Assume that there exists a constant $c_0>0$ such that $c_0 \leq \psi$ a.e. on $U$. Let $u \in H^{1,2}(V \times (0,T) )\cap L^\infty(V \times (0,T))$ satisfy 
\begin{eqnarray*} 
\;\; \iint_{U \times (0,T)} (u \partial_t\varphi) \psi dx dt = \iint_{U \times (0,T)}   \big \langle A\nabla u, \nabla \varphi  \big \rangle +\langle \widetilde{\mathbf{B}}, \nabla u \rangle \varphi \,dx dt, \quad \text{ for all }\varphi \in C_0^{\infty}(U \times (0,T)). \quad \label{baseq}
\end{eqnarray*}
Let $U$, $V$ be open subsets of $\mathbb{R}^d$ with $\overline{U} \subset V$ and $0<\tau_3<\tau_1<\tau_2<\tau_4<T$, i.e. $[\tau_1, \tau_2]\subset (\tau_3,\tau_4) \subset (0, T)$. Then, the following estimate holds:
\begin{equation*}\label{supest}
\|u\|_{L^{\infty}(U \times (\tau_1, \tau_2))} \leq C_1 
\|u\|_{L^{\frac{2\widetilde{p}}{\widetilde{p}-2},2}(V \times (\tau_3,\tau_4))},
\end{equation*}
where $C_1>0$ is a constant depending only on $r$, $\lambda$, $M$ and $\|\widetilde{\mathbf{B}}\|_{L^{\widetilde{p}}(U)}$ and $U$.
\end{theorem}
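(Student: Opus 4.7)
The plan is to carry out a parabolic Moser iteration adapted to the weighted time derivative (with $\psi$ bounded below by $c_0$ and above in $L^{\widetilde q}$) and the subcritical drift $\widetilde{\mathbf{B}}\in L^{\widetilde p}$, $\widetilde p>d$. Since $u\in L^\infty(V\times(0,T))$ is known \emph{a priori}, algebraic manipulations with arbitrary powers of $u$ are legitimate; the whole point is to extract a quantitative bound of $\|u\|_{L^\infty(U\times(\tau_1,\tau_2))}$ in terms of the weaker mixed norm $\|u\|_{L^{2\widetilde p/(\widetilde p-2),2}(V\times(\tau_3,\tau_4))}$.

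First, by applying the argument to $(u-\ell)^{+}$ and $(\ell-u)^{+}$ for suitable levels $\ell$ and subtracting, it suffices to treat $v:=u^{+}$. Using Steklov averaging in time to justify $\partial_t v$ pointwise in time, I test the weak equation with $\varphi=\eta^{2}v^{2k-1}$, where $k\ge 1$ and $\eta\in C_{0}^{\infty}(V\times(0,T))$ is a cut-off with $\eta\equiv 1$ on a suitable intermediate cylinder. Exploiting the ellipticity $\langle A\nabla v,\nabla v\rangle\ge\lambda\|\nabla v\|^{2}$, the lower bound $\psi\ge c_{0}$ on the resulting time term, and standard Young-type estimates on the $\nabla\eta$ cross-terms yields the Caccioppoli-type inequality
\begin{equation*}
\sup_{t}\int \eta^{2}\psi\, v^{2k}\,dx+\iint\big\|\nabla(\eta\, v^{k})\big\|^{2}\,dx\,dt\;\le\; C\,k^{2}\iint\big(\|\nabla\eta\|^{2}+|\partial_{t}\eta|\psi\big) v^{2k}\,dx\,dt+\mathcal D,
\end{equation*}
where $\mathcal D$ is the drift contribution. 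Here the hypothesis $\psi\in L^{\widetilde q}$ enters when bounding the $|\partial_t\eta|\psi$ integral via Hölder with exponent $\widetilde q$ paired against $v^{2k}$.

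Next, for the drift $\mathcal D=\iint\eta^{2}v^{2k-1}\langle\widetilde{\mathbf{B}},\nabla v\rangle\,dx\,dt$, Hölder in $x$ with the three conjugate exponents $\widetilde p$, $\tfrac{2\widetilde p}{\widetilde p-2}$, $2$ (so that $\tfrac{1}{\widetilde p}+\tfrac{\widetilde p-2}{2\widetilde p}+\tfrac{1}{2}=1$) followed by Cauchy--Schwarz in $t$ gives
\begin{equation*}
\mathcal D\;\le\;\tfrac{1}{k}\|\widetilde{\mathbf{B}}\|_{L^{\widetilde p}(V)}\,\big\|\eta\, v^{k}\big\|_{L^{2\widetilde p/(\widetilde p-2),2}}\,\big\|\eta\, v^{k-1}\nabla v\big\|_{L^{2}}.
\end{equation*}
Because $\widetilde p>d$, one has $\tfrac{2\widetilde p}{\widetilde p-2}<\tfrac{2d}{d-2}$ (interpreted as $\infty$ for $d=2$), so that the Sobolev embedding $H^{1}_{x}\hookrightarrow L^{2\widetilde p/(\widetilde p-2)}_{x}$ holds. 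Combined with the first term on the left-hand side of the Caccioppoli inequality (which controls $\eta v^{k}$ in $L^\infty_t L^2_x$ and thus, after interpolation with the $L^{2}_{t}H^{1}_{x}$ control, in the required mixed norm), a Young-type absorption moves $\mathcal D$ to the left, leaving a clean energy inequality for $v^{k}$.

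Finally, by parabolic Sobolev embedding,
\begin{equation*}
L^{\infty}_{t}L^{2}_{x}\cap L^{2}_{t}H^{1}_{x}\;\hookrightarrow\;L^{2\widetilde p/(\widetilde p-2),2}\quad\text{with a gain factor }\chi>1,
\end{equation*}
so iterating the energy inequality on a nested family of cylinders $\{V_{n}\times(\tau_{3,n},\tau_{4,n})\}$ with $\bigcap_{n}V_{n}\times(\tau_{3,n},\tau_{4,n})=U\times[\tau_{1},\tau_{2}]$ and a geometric sequence of exponents $k_{n}=\chi^{n}$ produces a recursion
\begin{equation*}
\big\|v\big\|_{L^{2k_{n+1},2}(V_{n+1})}\;\le\;(C\,a^{n})^{1/k_{n}}\big\|v\big\|_{L^{2k_{n},2}(V_{n})},
\end{equation*}
whose telescoped product converges. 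Letting $n\to\infty$, the left-hand side tends to $\|v\|_{L^{\infty}(U\times(\tau_{1},\tau_{2}))}$ and the base case is precisely $\|v\|_{L^{2\widetilde p/(\widetilde p-2),2}(V\times(\tau_{3},\tau_{4}))}$, giving the stated estimate with a constant depending only on $\lambda$, $M$, $\|\widetilde{\mathbf{B}}\|_{L^{\widetilde p}(U)}$, $c_{0}$, $\|\psi\|_{L^{\widetilde q}(V)}$, and the geometry of $U\subset V$ and $[\tau_{1},\tau_{2}]\subset(\tau_{3},\tau_{4})$.

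The main obstacle is the bookkeeping in the third step: one must choose the Hölder exponents for the drift term, the interpolation between $L^\infty_tL^2_x$ and $L^2_tH^1_x$, and the geometric progression of cylinders so that (i) the absorbed constant in each step does not degenerate as $k_{n}\to\infty$, and (ii) the base case of the iteration is \emph{exactly} the target norm $L^{2\widetilde p/(\widetilde p-2),2}$ rather than some other mixed Lebesgue norm—this is what forces the specific exponent pairing and uses the assumption $\widetilde p>d$ in an essential way.
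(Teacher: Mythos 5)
Your route is genuinely different from the paper's. The paper runs no iteration at all: it covers the compact set $\overline{U}\times[\tau_1,\tau_2]$ by finitely many parabolic cubes $R_{x_i}(r)\times(t_i-r^2,t_i)$ whose enlarged versions $R_{x_i}(3r)\times(t_i-(3r)^2,t_i)$ stay inside $V\times(0,T)$, and on each cube invokes the local boundedness result \cite[Theorem 1]{LT19de}, which already yields $\sup_{R_{x_i}(r)\times(t_i-r^2,t_i)}|u|\le c_i\,\|u\|_{L^{2\widetilde p/(\widetilde p-2),2}(R_{x_i}(2r)\times(t_i-(2r)^2,t_i))}$; the theorem then follows by taking the maximum over the finitely many cubes. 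What you propose is in effect a self-contained reproof of that cited local estimate by parabolic Moser iteration, which is legitimate in principle and is indeed the technique behind it.

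As written, however, your plan has a genuine gap at the concluding step. Your ladder keeps the time exponent frozen at $2$: the recursion bounds $\|v\|_{L^{2k_{n+1},2}}$ by $\|v\|_{L^{2k_n,2}}$, and letting only the spatial exponent tend to infinity controls at best $\big(\int_{\tau_1}^{\tau_2}\|v(\cdot,t)\|_{L^\infty(U)}^2\,dt\big)^{1/2}$, which does not dominate $\|v\|_{L^\infty(U\times(\tau_1,\tau_2))}$; so the claimed limit of the telescoped product is not the space-time sup. To close the argument you must either let both exponents grow (via the parabolic embedding $L^\infty_tL^2_x\cap L^2_tH^1_x\hookrightarrow L^{2(d+2)/d}_{t,x}$, so that genuinely parabolic norms appear after the first step and the mixed norm $L^{2\widetilde p/(\widetilde p-2),2}$ enters only at the base), or iterate directly on the energy quantity $\sup_t\int\eta^2\psi v^{2k}\,dx+\iint\|\nabla(\eta v^k)\|^2\,dx\,dt$, whose $k\to\infty$ limit does produce the sup in both variables. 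A second unverified point is exactly where the hypothesis $\widetilde q\in[2\vee\frac{\widetilde p}{2},\infty)$ is needed: the term $\iint|\partial_t\eta|\,\psi\,v^{2k}\,dx\,dt$ requires $v^{2k}\in L^{\widetilde q/(\widetilde q-1)}_x$, and it is the inequality $\frac{2\widetilde q}{\widetilde q-1}\le\frac{2\widetilde p}{\widetilde p-2}$ (equivalent to $\widetilde q\ge\frac{\widetilde p}{2}$) that lets this term be controlled by the same quantities as the drift contribution and absorbed along the iteration; without checking this the Caccioppoli inequality does not close. With these two repairs your Moser scheme would work, but the paper's covering-plus-citation argument is the much shorter complete proof.
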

\begin{proof}
For each $\bar{x} \in \overline{U}$, denote by $R_{\bar{x}}(r)$ the open cube in $\R^d$ of edge length $r>0$ centered at $\bar{x}$. Let $\tau_2^*:=\frac{\tau_2+T}{2}$ and take $r \in (0,\frac{\sqrt{\tau_1}}{3})$ satisfying that $R_{\bar{x}}(3r)\subset V$   for all  $\bar{x}\in \overline{U}$.
Then for all $(\bar{x}, \bar{t}) \in \overline{U} \times [\tau_1, \tau_2^*]$, we have $R_{\bar{x}}(3r) \times (\bar{t}-(3r)^2, \bar{t})\subset V \times (0, T)$ and $\{R_{\bar{x}}(r)\times  (\bar{t}-r^2, \bar{t})\,:\, (\bar{x}, \bar{t}) \in \overline{U} \times [\tau_1, \tau_2^*]\}$ is an open cover of $\overline{U} \times [\tau_1, \tau_2]$.
Using the compactness of $\overline{U} \times [\tau_1, \tau_2]$, there exist $(x_i, t_i) \in  \overline{U} \times [\tau_1, \tau^{*}_2]$, $i=1, \dots, N$, such that
\[
\overline{U} \times [\tau_1, \tau_2] \subset \bigcup_{i=1}^{N} R_{x_i}(r) \times (t_i-r^2, t_i). 
\]
By \cite[Theorem 1]{LT19de},
\begin{eqnarray*}
 \sup_{\overline{U} \times [\tau_1, \tau_2]} |u| &\leq&  \max_{i=1,\dots,N}\, \sup_{R_{x_i}(r) \times (t_i-r^2, t_i)} |u| \\
&\leq&  \max_{i=1,\dots, N}\, c_i\|u\|_{L^{\frac{2\widetilde{p}}{\widetilde{p}-2},2}\big(R_{x_i}(2r) \times (t_i-(2r)^2, t_i)\big)} \\
&\leq& \underbrace{(\max_{i=1,\dots,N}\, c_i)}_{=:C_1} \,\|u\|_{L^{\frac{2\widetilde{p}}{\widetilde{p}-2},2}\big(V \times (\tau_3, \tau_4)\big)},
\end{eqnarray*}
where $c_i>0$, $1 \leq i \leq N$, are constants which are independent of $u$.
\end{proof}

\centerline{}
\centerline{}
\centerline{}
Haesung Lee\\
Department of Mathematics and Big Data Science,  \\
Kumoh National Institute of Technology, \\
Gumi, Gyeongsangbuk-do 39177, South Korea, \\
E-mail: fthslt@kumoh.ac.kr \\ \\ 
Gerald Trutnau\\
Department of Mathematical Sciences and \\
Research Institute of Mathematics of Seoul National University,\\
1 Gwanak-Ro, Gwanak-Gu,
Seoul 08826, South Korea,  \\
E-mail: trutnau@snu.ac.kr
\end{document}